\theoremstyle{plain}
\numberwithin{equation}{section}
\newtheorem {Theorem}[equation]{Theorem}
\newtheorem {Lemma}[equation]            {Lemma}
\newtheorem {Corollary}[equation]       {Corollary}
\newtheorem{Proposition}[equation]       {Proposition}
\newtheorem*{Proposition*}              {Proposition}
\theoremstyle{definition}
\newtheorem{Definition}[equation]{Definition}
\newtheorem{Notation}[equation]{Notation}
\theoremstyle{remark}
\newtheorem{Remark}[equation]{Remark}
\newtheorem*{remark}{Remark}
\newtheorem{Example}[equation]{Example}
\newcommand     {\comment}[1]   {}
\newcommand{\mute}[2] {}
\newcommand     {\printname}[1] {}
\newcommand{\labell}[1] {\label{#1}\printname{#1}}
\def \dash {\textbf{--}}
\DeclareMathOperator{\image}{image}
\newcommand{\cut}{\text{cut}}
\newcommand{\ext}{\text{ext}}
\newcommand{\Z}{{\mathbb Z}}
\newcommand{\R}{{\mathbb R}}
\newcommand{\C}{{\mathbb C}}
\newcommand{\calT}{{\mathcal T}}
\newcommand{\calO}{{\mathcal O}}
\newcommand{\calJ}{{\mathcal J}}
\newcommand{\fh}{\mathfrak{h}}
\newcommand{\fg}{\mathfrak{g}}
\newcommand{\ft}{\mathfrak{t}}
\newcommand{\Rplus}{\R_{>0}}
\newcommand{\eps}{\varepsilon}
\newcommand{\ssminus}{\smallsetminus}
\newcommand{\Inv}{^{-1}}
\newcommand{\ol}{\overline}
\newcommand{\gammabar}{\ol{\gamma}}
\newcommand{\Psibar}{\ol{\Psi}}
\newcommand{\xbar}{\ol{x}}
\newcommand{\Abar}{\ol{A}}
\newcommand{\Lambdabar}{\ol{\Lambda}}
\newcommand{\sigmabar}{\ol{\sigma}}
\newcommand{\tJ}{\widetilde{J}}
\newcommand{\ts}{\tilde{s}}
\newcommand{\tlambda}{\tilde{\lambda}}
\newcommand{\tgamma}{\tilde{\gamma}}
\newcommand{\talpha}{{\widetilde{\alpha}}}
\newcommand{\tpsi}{\widetilde{\psi}}
\newcommand{\xhat}{\hat{x}}
\newcommand{\gammahat}{\hat{\gamma}}
\newcommand{\half}{{\frac{1}{2}}}
\newcommand{\del}{\partial}
\newcommand{\co}{\colon\thinspace}
\begin{document}

	\title{Convexity package for momentum maps on contact manifolds}

	\author[River Chiang]{River Chiang}
	\address{Department of Mathematics, National Cheng Kung University, Tainan 701, Taiwan}
	\email{riverch@mail.ncku.edu.tw}

	\author[Yael Karshon]{Yael Karshon}
	\address{Department of Mathematics, University of Toronto, Toronto, Ontario
	M5S 2E4, Canada}
	\email{karshon@math.toronto.edu}
	%\urladdr{http://www.math.toronto.edu/karshon/}

%	\keyword{momentum map}
%	\keyword{contact manifold}
%	\keyword{torus action}
%	\keyword{convexity}
%	\subject{primary}{msc2010}{53D10}
%	\subject{primary}{msc2010}{53D20}
%	\subject{secondary}{msc2010}{52B99}

\begin{abstract}
Let a torus $T$ act effectively on a compact connected cooriented 
contact manifold, and let $\Psi$ be the natural momentum map 
on the symplectization.
We prove that, if $\dim T > 2$, 
the union of the origin with the image of $\Psi$
is a convex polyhedral cone, the nonzero level sets of $\Psi$
are connected (while the zero level set can be disconnected),
and the momentum map is open as a map to its image.
This answers a question posed by Eugene Lerman,
who proved similar results when the zero level set
is empty.  We also analyze examples with $\dim T \leq 2$.
\end{abstract}

% \thanks{
% This research is partially supported by an NSERC Discovery grant,
% by the NSC grant 96-2115-M-006-010-MY2, and by the NCTS (South).}

\thanks{\emph{2010 Mathematics Subject Classification}.
Primary 53D, Secondary 52B, 57R}

%\markleft{A AND B}

\maketitle

\setcounter{tocdepth}{1}
\tableofcontents

% =====================================================
\section{Introduction}
% =====================================================
\labell{sec:intro}

One of the fundamental theorems in equivariant symplectic geometry
is the convexity theorem for Hamiltonian torus actions.
This theorem is part of the following ``convexity package".
Let $\Phi \co M \to \R^k$ be a momentum map 
for a Hamiltonian torus action
on a compact connected symplectic manifold.
Then $\Phi$ has these properties:
\begin{itemize}
\item[(S1)]
The image $\Phi(M)$ is a convex polytope.
\item[(S2)]
The level sets of $\Phi$ are connected.
\item[(S3)]
The map $\Phi$ is open as a map to its image.
\end{itemize}
See~\cite{At,GS,Sj}.

Eugene Lerman~\cite{lerman} gave an analogous theorem
in equivariant contact geometry when the torus orbits
are transverse to the contact distribution,
and asked whether the transversality condition is necessary  
(cf.\ Remark~\ref{transverse}).
In this paper we answer Lerman's question
and give a ``convexity package" for momentum maps on contact manifolds.
More precisely, let $M$ be a compact connected cooriented contact manifold,
equipped with an effective action of a torus of dimension $k > 2$, 
and let
$$\Psi \co M \times \Rplus \to \R^k$$
be the momentum map on the symplectization (see below).
The \emph{momentum cone} is
$$ C(\Psi) := \{ 0 \} \cup \Psi(M \times \Rplus) .$$
Then $\Psi$ has these properties:
\begin{itemize}
\item[(C1)]
The momentum cone $C(\Psi)$ is a convex polyhedral cone.
\item[(C2)]
The nonzero level sets, $\Psi\Inv(\mu)$, for $\mu \neq 0$, are connected.
\item[(C3)]
The map $\Psi$ is open as a map to its image.
\end{itemize}
See Theorem~\ref{contact main}.

\bigskip

We now recall relevant definitions.

Let $M$ be a manifold of dimension $2n+1$.
A \emph{contact form} on $M$ is a one--form $\alpha$
such that $\alpha \wedge (d\alpha)^n$ never vanishes,
or, equivalently, such that $d\alpha$ is nondegenerate
on $\ker \alpha$.
If $\alpha$ is a contact one--form and $f$ is a positive function,
then $f\alpha$ is a contact one--form and $\ker \alpha = \ker f\alpha$.

A \emph{contact structure} $\xi$ on $M$ is a codimension one distribution
(subbundle of the tangent bundle $TM$) that can be
locally obtained as the kernel of a contact one--form.
If $\xi$ is cooriented, there exists a globally defined one--form $\alpha$
such that $\xi = \ker \alpha$ and $\alpha$ induces the coorientation
of $\xi$.  Such $\alpha$ is unique up to multiplication by a positive function.

The \emph{symplectization} of $(M,\alpha)$ is the symplectic manifold
$(M \times \Rplus , d(t\alpha) )$,
where $t$ is the coordinate on $\Rplus$ and where we use the same symbol
$\alpha$ to denote the contact one--form on $M$ and its pullback
to $M \times \Rplus$.
Nondegeneracy of $d(t\alpha)$ follows from the property
$\alpha \wedge (d\alpha)^n \neq 0$.

Consider the positive connected component of the annihilator of $\xi$
in the cotangent bundle $T^*M$:
$$ \xi^0_+ := \left\{ (x,\beta) \ | \
   x \in M, \ \beta \in T^*_xM, \ \beta(\xi|_x) = 0, \
   \beta \text{ induces the coorientation of $\xi|_x$ }
\right\} .$$
The map $M \times \Rplus \to \xi^0_+$
which sends $(x,t)$ to $t \alpha_x$
defines a trivialization of $\xi^0_+$
as a principal $\Rplus$ bundle.
Also, it pulls back the tautological one--form on $T^*M$
to the one--form $t\alpha$ on $M \times \Rplus$
and the standard symplectic form on $T^*M$
to the symplectic form $d(t\alpha)$ on $M \times \Rplus$.
Thus, to avoid choosing a contact one--form, one may define the
symplectization of $(M,\xi)$ to be the symplectic submanifold $\xi^0_+$
of $T^*M$.

Let a torus $T \cong (S^1)^k$ act on $M$ and preserve 
the cooriented distribution $\xi$.
Let $X_M$, for $X$ in the Lie algebra $\ft$ of $T$, 
denote the vector fields on $M$
that are induced from the action:
$X_M(x)=\frac{d}{dt}|_{t=0}(\exp tX) \cdot x$.
The action naturally lifts to a Hamiltonian action
on the cotangent bundle $T^*M$
which preserves the submanifold $\xi^0_+$.
The standard momentum map on $T^*M$
restricts to a momentum map on $\xi^0_+$
whose $X$ component, for $X \in \ft$,
is given by $(x,\beta) \mapsto \beta(X_M(x))$
for all $x\in M$ and $\beta\in \xi^0_+|_x \subset T^*_x M$.

Let $\alpha$ be a $T$--invariant contact one--form on $M$.
(Such $\alpha$ can be obtained by averaging; see Lemma~2.6
of Lerman's paper \cite{lerman:ctm}.)
The \emph{$\alpha$--momentum map} is the map
$\Psi_\alpha \co M \to \ft^*$
whose $X$ component $\Psi_\alpha^X \co M \to \R$,
for $X \in \ft$, is given by
\begin{equation} \labell{Psi alpha}
 \Psi_\alpha^X(x) = \alpha(X_M(x)) 
\end{equation}
for all $x \in M$.
It satisfies 
$d\Psi_\alpha^X = -\iota(X_M)d\alpha$ on $M$.
Using $\alpha$ to identify $\xi^0_+$ with $M \times \Rplus$,
the induced $T$ action on $M \times \Rplus$ is the given action
on the $M$ component and is trivial on the $\Rplus$ component,
and the momentum map becomes the map
$$ \Psi \co M \times \Rplus \to \ft^* \qquad , \qquad
    (x,t) \, \mapsto \,  t\Psi_\alpha(x) , $$
which we call the \emph{contact momentum map} corresponding to $\alpha$.
Thus, the momentum cone is
\[
\begin{split}
C(\Psi) & = \{ 0 \} \cup \Psi(M \times \Rplus) \\
 & = \R_{\geq 0} \cdot \Psi_\alpha(M).
\end{split}
\]

\begin{Remark} \labell{transverse}
The action is said to be \emph{transverse} if the orbits are transverse
to the contact distribution.  By the formula~\eqref{Psi alpha}
for the momentum map, an action is transverse if and only if 
its momentum map never takes the value zero.
A contact momentum map $\Psi \co M \times \Rplus \to \ft^*$
is never proper as a map to $\ft^*$: the preimage
of a closed ball centered at the origin is never compact.
But if $M$ is compact and the action is transverse,
then the momentum map is proper as a map to $\ft^* \ssminus \{ 0 \}$.
In the case of transverse torus actions,
parts (3) and (4) of Theorem~\ref{contact main} were proved 
by Eugene Lerman in~\cite{lerman}.
\end{Remark}

\begin{Remark}
The definition of the contact momentum map,
as a map on $M \times \Rplus$, 
depends on the choice of one--form $\alpha$.
The topological properties of the contact momentum map
are independent of the choice of $\alpha$,
because we can work directly on $\xi^0_+$.
\end{Remark}

\begin{Remark} \labell{symplectic proper}
If the image of the contact momentum map is contained in an open half--space,
then the ``convexity package" (C1), (C2), (C3)
is true without the dimension assumption on $T$, 
and it follows from the ``convexity package"
for symplectic manifolds with proper momentum maps:
\begin{quotation}
Let $\Phi \co Q \to \ft^*$ be a momentum map for a torus action
on a connected symplectic manifold.
Suppose that there exists a convex subset $\calT$ of $\ft^*$ 
that contains the image $\Phi(Q)$ and such that $\Phi \co Q \to \calT$ 
is proper. Then $\Phi$ is open as a map to its image,
the image $\Phi(Q)$ is a convex polyhedral subset of $\calT$, 
and the level sets of $\Phi$ are connected.
\end{quotation}
See \cite{CDM}, \cite{HNP}, \cite{prato}, \cite[Theorem~4.3]{LMTW},
and \cite[Theorem~30]{karshon-marshall}.
\end{Remark}

\begin{Remark} \labell{counterexamples}
Under our assumptions that guarantee (C1), (C2), and (C3)
(in particular, $\dim T > 2$),
the zero level set \emph{can} be disconnected;
see Examples~\ref{disconnected} and~\ref{disconnected2}.
Eugene Lerman noted, in \cite[Remark~1.4]{lerman}
and referring to his construction in~\cite{lerman:contact-cuts},
that if $\dim T \leq 2$,
then (C2) may fail, and if $\dim T = 2$, then (C1) may fail. 
We analyze Lerman's examples in more detail
in Example~\ref{ex:dim two},
where $\dim T = 2$ and the zero level set is empty,
and in Example~\ref{level},
where $\dim T = 1$ and the zero level set is nonempty.
If $\dim T=1$ and the zero level set is empty,
(C1), (C2), (C3) all hold, by Remark~\ref{symplectic proper}.
In fact, (C1) holds trivially if $\dim T=1$.
If $\dim T =2$ and the zero level set is nonempty,
we are currently unaware of examples where (C1), (C2), and (C3)
don't all hold. 
\end{Remark}

The ``convexity package" (C1), (C2), (C3)
says that the momentum map has certain global 
topological properties.  
To prove it, 
we show that the momentum map has certain local 
topological properties,
and we show that these local properties imply the required
global properties
by means of a point--set--topological ``local to global" argument.
We now give some details.

Let $X$ denote a compact connected Hausdorff space.

We define a map $\varphi \co X \to \R^n$ 
to be \emph{convex} if any two points in $X$
can be connected by a path whose composition 
with $\varphi$ is a weakly monotone parametrization
of a (possibly degenerate) segment.
See Section~\ref{sec:convex maps}.
Similarly, we define a map $\Psi \co X \to S^{n-1}$
to be \emph{spherically convex}
if any two points in $X$ whose images are not antipodal
can be connected by a path whose composition with $\Psi$
is a weakly monotone geodesic of length $< \pi$.
The image of such a map is spherically convex,
and the level sets of such a map are connected. 
See Remark~\ref{notions of convexity} 
and Definition~\ref{spherically convex map}.

In Sections~\ref{sec:spherical geometry}--\ref{sec:shorten}
we give a ``local to global" argument for maps from $X$ to $S^{n-1}$.
If the image is not contained in a great circle,
and if every point has a neighborhood on which the map
is spherically convex and open to its image,
then the map is spherically convex and open to its image.
This result, together with the analysis of the case
that the image \emph{is} contained in a great circle,
is given in Proposition~\ref{two possibilities}.

In Section~\ref{sec:nonvanishing}, we give a 
``local to global" argument for a map 
$\Psi \co X \times \Rplus \to \R^n$
that is obtained from a nonvanishing map
$\varphi \co X \to \R^n$ 
by $\Psi(x,\lambda) = \lambda \varphi(x)$.
Suppose that the image is not contained in a two dimensional subspace,
and suppose that every point in $X \times \Rplus$ has a neighborhood
on which $\Psi$ is convex and $\Psi/\| \Psi \|$ is open to its image.
Then every two points $y_0$ and $y_1$ in $X \times \Rplus$
for which
the segment $[\Psi(y_0),\Psi(y_1)]$ does not contain the origin
can be connected by a path whose composition
with $\Psi$ is a weakly monotone parametrization of this segment.
Additionally, $\Psi$ is open as a map to its image. 
See Proposition~\ref{stretch}.

In Section~\ref{sec:excision} we define $\Psi$ as before
but we allow $\varphi$ to sometimes vanish.
This is the case that applies to contact momentum maps 
for nontransverse actions.
The strategy is to remove from $X$ the $\varphi$--preimage
of an open ball around the origin
and to apply the results of Section~\ref{sec:nonvanishing} to 
the resulting ``excised space".
The precise statement is more technical than in the nonvanishing case,
and the proof is more involved.
We formulate local assumptions on $\varphi$ and $\Psi$
that guarantee the following global properties.
Every two points in $X \times \Rplus$ 
that do not both lie on the zero level set
can be connected by a path whose composition with $\Psi$
is a weakly monotone parametrization of a 
(possibly degenerate) segment; additionally,
the map $\Psi$ is open as a map to its image.
See Proposition~\ref{with zero'}.

In Section~\ref{sec:contact} we show that momentum maps 
on symplectizations have the local openness and convexity properties
that, by the results of Section~\ref{sec:excision},
imply the global properties that we had set to prove.
These local properties are consequences of the local normal form 
theorem, which describes the neighborhood of an orbit
in a symplectic manifold with a Hamiltonian action of a compact Lie group.
We apply the local normal form theorem in two different ways:
\begin{enumerate}
\item
The symplectization is a symplectic manifold
with a Hamiltonian torus action. We apply the
local normal form theorem to neighborhoods of its orbits.
\item
Take the contact manifold itself, with a momentum map
associated to some choice of invariant contact one--form. 
Consider an orbit that lies on the zero level set
of the momentum map.
By formula~\eqref{Psi alpha}, 
the tangent space
to this orbit is contained in the contact distribution;
therefore it is transverse to the Reeb directions.
We find an invariant codimension one submanifold
containing the orbit and transverse to the Reeb directions, 
on which the differential of the contact form is a nondegenerate two--form. 
We apply the local normal form to this submanifold.  
\end{enumerate}

To show the local properties of the momentum map,
it remains to examine the local models that appear in the
local normal form theorem.  We use the additional fact 
that the momentum map image of the orbit
belongs to the annihilator of the stabilizer of the orbit;
it holds because the momentum map comes from 
a one--form whose differential is the symplectic form.

By examining the local models,
we need to show that the restriction
of a linear projection to a standard simplex 
with a facet removed is open as a map to its image
and has the weak path lifting property. 
These properties may be intuitively obvious but their rigorous proofs
are not entirely trivial;
we give them in Section~\ref{sec:linear}.
\comment{
We also need to show that the normalized momentum map
on the local model
is open as a map to its image in the sphere;
we do this in Lemma~\ref{model sph open}.
}

We conclude Section~\ref{sec:contact} with 
a proof of the convexity package; see Theorem \ref{contact main}.
And, in Section~\ref{sec:examples}, we conclude the paper with examples.

\medskip

The type of local--to--global technique that we use was initiated 
by Condevaux--Dazord--Molino in \cite{CDM} 
and developed in \cite{HNP,HNP2,BOR1,BOR2,karshon-marshall}
by Hilgert--Neeb--Plank, Birtea--Ortega--Ratiu, and Bjorndahl--Karshon;
also see~\cite{knop}.
It can be viewed as a generalization of the Tietze--Nakajima
theorem~\cite{nakajima, tietze}.

Convexity results in contact geometry appeared in~\cite{bm2}
(toric case) and in~\cite{lerman} (transverse case).
Torus actions on contact manifolds (or on symplectic cones,
cf.\ Remark~\ref{symplectic cone})
were also studied in 
\cite{GS:homogeneous,
albert,kamishima-tsuboi,mt,boyer-galicki,loose,lerman:contact-cuts,
lerman-willett,lerman,willett,lerman:ctm,nozawa}.

The book \cite{guillemin-sjamaar} gives an overview 
of convexity results in symplectic geometry.
The book \cite{geiges-book} is a general reference
for (not necessarily equivariant) contact geometry.
Finally, 
our notion of convexity is unrelated to ``contact convexity"
as in, e.g., \cite[Chapter~5]{geiges-book}. 

\subsection*{Acknowledgements}
We are grateful to Eugene Lerman for teaching us
about momentum maps for contact manifolds.
Y.\ K.\ is grateful to Shlomo Sternberg for directing her 
to the paper by Condevaux--Dazord--Molino, many years ago.
% We are also grateful to Tudor Ratiu for explaining
% that the word ``moment" has a physical meaning
% that is different from ``momentum",
% which makes the common term ``moment map" 
% (synonymous to ``momentum map") misleading.

This research was partially supported by an NSERC Discovery grant,
by the NSC grant 96-2115-M-006-010-MY2, and by the NCTS (South).

% ===================================
\section{Convex maps}
% ===================================
\labell{sec:convex maps}

\begin{Definition}
A path $\gamma \co [a,b] \to \R^n$ is \emph{weakly monotone straight}
if, for any $t_1,t_2,t_3 \in [a,b]$,
if $t_1 \leq t_2 \leq t_3$, then
$\gamma(t_2) \in [\gamma(t_1),\gamma(t_3)]$.
\end{Definition}

\begin{Definition} \labell{convex map}
A map $\varphi$ from a topological space $X$ to $\R^n$ or to a convex
subset of $\R^n$ is a \emph{convex map} if every two points $x_0$, $x_1$
in $X$ can be connected by a path $\gamma \co [0,1] \to X$
with $\gamma(0) = x_0$ and $\gamma(1) = x_1$,
such that the composition
$\varphi \circ \gamma \co [0,1] \to \R^n$ is weakly monotone straight.
\end{Definition}

Clearly, if $\varphi \co X \to \R^n$ is convex,
then $X$ is connected, the image of $\varphi$ is convex,
and the level sets of $\varphi$ are connected.

\begin{remark}
Our definition of a convex map, from a topological space to $\R^n$,
\emph{does not} agree with the notion
of a ``convex function'', from a convex subset of a vector space to $\R$,
being a function that satisfies
$f(tx+(1-t)y)\leq t f(x)+(1-t)f(y)$ for all $0 \leq t \leq 1$.
\end{remark}

We recall Theorem 15 of~\cite{karshon-marshall}:

\begin{Theorem} \labell{local-global}
Let $X$ be a connected Hausdorff topological space,
let $\calT$ be a convex subset of $\R^n$,
and let $\Psi \co X \to \calT$ be a continuous
proper map.  Suppose that for every point $x \in X$
there exists an open neighborhood $U$ of $x$
such that the map $\Psi|_U \co U \to \Psi(U)$
is convex and open.  Then the map $\Psi \co X \to \Psi(X)$
is convex and open.
\end{Theorem}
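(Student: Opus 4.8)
The plan is to run a ``local-to-global'' argument in the spirit of Tietze and Nakajima: extract the topological consequences of properness, prove the path-lifting (``convex map'') property by an open--closed argument, deduce connectedness of the fibers and convexity of the image essentially for free, and finally upgrade local openness to global openness by a chaining argument along a fiber.

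\emph{Consequences of properness.} Because $\calT\subseteq\R^n$ is metrizable and $\Psi\co X\to\calT$ is continuous and proper, $\Psi$ is a closed map; hence $Y:=\Psi(X)$ is closed in $\calT$, and it is connected since $X$ is. Every fiber $\Psi^{-1}(p)$ is compact, and the preimage of a convergent sequence together with its limit lies in a compact set. This is the bridge from ``local near a point'' to ``local near a fiber'': given $p\in Y$, finitely many of the neighborhoods supplied by the hypothesis cover $\Psi^{-1}(p)$.

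\emph{The convex-map property}, which is the step I expect to be the main obstacle. Fix $x_0$ and put $p_0=\Psi(x_0)$; let $A$ be the set of $x\in X$ that can be reached from $x_0$ by a path whose composition with $\Psi$ is weakly monotone straight. Since $A\ni x_0$ and $X$ is connected it suffices to show $A$ is open and closed. Openness of $A$ at a point $x_1\in A$ is easy when $\Psi(x_1)=p_0$: the witnessing lift is constant, and a hypothesis neighborhood $U_1$ of $x_1$ lies in $A$ because $\Psi|_{U_1}$ is a convex map, so each point of $U_1$ is joined to $x_1$ by a weakly monotone straight path inside $U_1$. When $\Psi(x_1)\neq p_0$, for $x'$ close to $x_1$ one must lift the segment $[p_0,\Psi(x')]$ --- which is near, but not aligned with, $[p_0,\Psi(x_1)]$ --- and splice it correctly onto a short local lift reaching $x'$; this re-routing is the crux, and I would attack it by induction on the ambient dimension $n$, the case $n=1$ by a direct ``follow the level sets monotonically'' argument and the inductive step by deducing the needed lift from enough lower-dimensional instances of the statement, together with the local convex-map structure near $x_1$ to close up. Closedness of $A$ uses properness: if $x_k\to x_*$ with $x_k\in A$, the witnessing lifts take values in a fixed compact set, so one extracts a lift of $[p_0,\Psi(x_*)]$ reaching points arbitrarily near $x_*$ and completes it inside a hypothesis neighborhood of $x_*$. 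Once $A=X$, the map $\Psi$ is a convex map; in particular $Y$ is convex and every level set of $\Psi$ is connected, so each fiber $\Psi^{-1}(p)$ is connected.

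\emph{Openness.} It remains to show $\Psi\co X\to Y$ is open, i.e.\ that for a hypothesis neighborhood $U$ of a point $x$ the set $\Psi(U)$ is a neighborhood of $p:=\Psi(x)$ in $Y$. If not, pick $q_k\to p$ in $Y$ with $q_k\notin\Psi(U)$, lift to $y_k$, and extract $y_k\to y_*\in\Psi^{-1}(p)$, necessarily with $y_*\notin U$. Since $\Psi^{-1}(p)$ is connected, join $x$ to $y_*$ inside it by a path covered by finitely many hypothesis neighborhoods $U=U_0,U_1,\dots,U_r\ni y_*$ with each $U_{i-1}\cap U_i$ meeting $\Psi^{-1}(p)$. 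As $\Psi|_{U_i}$ is open to its image, $\Psi(U_{i-1}\cap U_i)$ is a neighborhood of $p$ in $\Psi(U_i)$; hence the values $q_k$, which lie in $\Psi(U_r)$ for large $k$, are pushed successively into $\Psi(U_{r-1}),\dots,\Psi(U_0)=\Psi(U)$, a contradiction. As a byproduct each $\Psi(U)$ is a relatively open convex subset of $Y$, so $Y$ is locally convex, consistent with the Tietze--Nakajima theorem.
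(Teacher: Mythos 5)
The gap is in the step you yourself flag as the crux, and it is not a removable one: the open--closed argument on the set $A$ of points reachable from $x_0$ by a weakly monotone straight lift does not close up, at either end. For openness of $A$ at a point $x_1$ with $\Psi(x_1)\neq p_0$, and equally for closedness at a limit point $x_*$, the witnessing path for the nearby point $x'$ must track the \emph{entire} segment $[p_0,\Psi(x')]$, which differs from $[p_0,\Psi(x_1)]$ along its whole length and not just near the far endpoint; a local splice inside a hypothesis neighborhood of $x_1$ (or of $x_*$) can only produce a lift of the broken path $[p_0,\Psi(x_1)]$ followed by $[\Psi(x_1),\Psi(x')]$, which is not weakly monotone straight unless the three points are collinear. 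Your proposed repair --- induction on the ambient dimension $n$ --- is not developed into an argument, and I do not see how it could be made to work: the restriction of $\Psi$ to the preimage of an affine hyperplane need not satisfy the hypotheses of the theorem (the sets $\Psi(U)$ need not meet the hyperplane in relatively open convex subsets, and the preimage need not be connected before one already knows the conclusion), so the lower-dimensional instances you want to invoke are not available.

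The proof this theorem actually has (Theorem~15 of the cited Bjorndahl--Karshon paper, following Condevaux--Dazord--Molino; the same scheme is carried out in this paper's proof of Proposition~\ref{exists monotone geodesic} in the spherical setting) avoids the re-routing problem entirely. One introduces the pseudo-distance $d_\Psi(x,x')=\inf_\gamma l(\Psi\circ\gamma)$ over connecting paths, uses properness and the local hypotheses to produce midpoints and hence a dyadic family of points realizing $d_\Psi$ additively, passes to neighborhoods of entire level-set components on which $\Psi$ is convex and open (the analogue of Lemma~\ref{good U}), and then assembles short local lifts into a single weakly monotone straight path; convexity of the image comes out of this construction rather than being fed into it. Your final paragraph on openness (chaining $\Psi(U_{i-1}\cap U_i)$ along a connected fiber) is sound and is essentially the standard argument, but it presupposes connectedness of the fibers, i.e.\ the convex-map property, so the proposal stands or falls with the unproved crux.
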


\begin{Example} \labell{example R2}
In each of the following examples, $X$ is a subset of $\R^2$,
and $\Psi \co X \to \R$ is the projection to the $x$--coordinate.
In each of these examples, exactly one of the assumptions
of Theorem~\ref{local-global} fails,
and the map $\Psi$ is not convex.

\begin{enumerate}

\item
Let $X$ be the union of the set where $x>0$ and $|y| \leq 1$
and the set where $x=0$ and $|y|=1$.
The projection to the $x$--axis is locally convex and locally open
to its image, but it is not proper.

\item
Let $X$ be the union of the negative $x$--axis,
the set where $x \geq 0$ and  $y = x$,
and the set where $x \geq 0$ and  $y = -x$.
The projection to the $x$--axis is locally open and is proper,
but it is not locally convex.

\item
Let $X$ be the union of the negative $x$--axis,
the portion of the $y$--axis where  $|y| < 1$,
and the set where  $x \geq 0$  and  $|y| = 1$.
The projection to the $x$--axis is locally convex
and is proper, but it is not locally open to its image.
\end{enumerate}
\end{Example}

% ==========================
\section{Spherical geometry}
% ==========================
\labell{sec:spherical geometry}

In this section we recall some elementary facts from spherical geometry.

Let $S^{n-1}$ be the unit sphere in $\R^n$.
A \emph{great circle} in $S^{n-1}$ is an intersection of $S^{n-1}$
with a two dimensional plane through the origin in $\R^n$.
A path $\gamma \co [0,1] \to S^{n-1}$ is a
\emph{weakly monotone geodesic}
if it is either constant or is equal to a composition
$$\begin{CD}
 [0,1] @>>> [\theta_0,\theta_1] @> (\cos(\cdot),\sin(\cdot)) >> S^1
 @> \iota >> S^{n-1}
\end{CD}$$
where the map $[0,1] \to [\theta_0,\theta_1]\subset \R$ is onto
and weakly monotone and where the map $\iota$ is an isometric
embedding of $S^1$ into $S^{n-1}$ as a great circle. The length
of the path is equal to
the length of the interval $[\theta_0,\theta_1]$. The path is
a weakly monotone \emph{short} geodesic if its length is $< \pi$.
A subset of $S^{n-1}$ is \emph{spherically convex} if every short
geodesic whose endpoints belong to the set is contained in the set.
An \emph{open hemisphere} is the intersection of $S^{n-1}$
with an open half--space whose boundary contains the origin.
An open hemisphere is spherically convex.
More generally, let $w$ be a point of $S^{n-1}$ 
and let $B(w, \eps)$ denote the open ball of radius $\eps$ centered at $w$. 
If $B(w,\eps) \cap S^{n-1}$ is contained in an open hemisphere,
then it is spherically convex.

\begin{Remark} \labell{notions of convexity}
There are several inequivalent notions of convexity in the literature;
see, e.g.,\cite[\S 9.1]{klee}.
``Robinson convexity" \cite{robinson} is defined for closed subsets 
of the sphere, and, for these subsets, it coincides with our notion 
of spherical convexity.
Note that this notion allows a set that consists of exactly 
one pair of antipodal points.
The definitions of ``geodesically convex" are stricter;
the entire sphere is spherically convex according to our definition 
but it is not geodesically convex.  
\end{Remark}

\begin{Lemma} \labell{straight}
Let $\gamma \co [0,1] \to \R^n$ be a weakly monotone straight path
that does not pass through the origin. Then
$ \gamma_S := \gamma / \| \gamma \| \co [0,1] \to S^{n-1} $
is a weakly monotone short geodesic.
\end{Lemma}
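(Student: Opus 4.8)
The plan is to reduce the statement to a two-dimensional, indeed a planar-geometry, computation by exploiting the fact that a weakly monotone straight path, by definition, has image contained in a single segment $[\gamma(0),\gamma(1)]$. First I would dispose of the degenerate case: if $\gamma(0)$ and $\gamma(1)$ are positive scalar multiples of one another (in particular if the segment is a point), then the segment lies on a ray from the origin that misses the origin, so $\gamma_S$ is constant, hence trivially a weakly monotone short geodesic. Otherwise $\gamma(0)$ and $\gamma(1)$ are linearly independent: if they were dependent but not positive multiples, the segment between them would cross the origin, contradicting the hypothesis that $\gamma$ avoids the origin. So the whole segment, and hence the image of $\gamma$, lies in the two-dimensional plane $P$ spanned by $\gamma(0)$ and $\gamma(1)$, and $\gamma_S$ takes values in the great circle $S^{n-1}\cap P$.

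Working in $P \cong \R^2$, I would parametrize the segment as $\sigma(s) = (1-s)\gamma(0) + s\gamma(1)$ for $s \in [0,1]$, which avoids the origin, and consider the radial projection $\sigma_S(s) = \sigma(s)/\|\sigma(s)\|$ onto the unit circle. The key geometric claim is that $\sigma_S$ is a weakly monotone short geodesic: as $s$ increases from $0$ to $1$, the angle of $\sigma(s)$ sweeps monotonically through the angle $\theta \in (0,\pi)$ between the rays $\R_{>0}\gamma(0)$ and $\R_{>0}\gamma(1)$, never turning back. This is because the chord from $\gamma(0)$ to $\gamma(1)$ lies entirely on one side of the origin, so the argument function (measured continuously, say via $\arctan$ in suitable coordinates on $P$ where $\gamma(0)$ lies on the positive $x$-axis) is a monotone function of $s$ — concretely, the signed area of the triangle with vertices $0$, $\sigma(s)$, $\sigma(s')$ has a fixed sign for $s < s'$, which is exactly the statement that the polar angle is monotone. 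Since $\theta < \pi$, the geodesic is short, and a monotone reparametrization of a short geodesic arc is again a weakly monotone short geodesic.

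Finally I would assemble the pieces. Since $\gamma$ is weakly monotone straight, for $t_1 \le t_2 \le t_3$ we have $\gamma(t_2) \in [\gamma(t_1),\gamma(t_3)] \subseteq [\gamma(0),\gamma(1)]$, and in particular there is a weakly monotone surjection $h \co [0,1] \to [0,1]$ with $\gamma = \sigma \circ h$ (one can take $h(t)$ to be the parameter value of $\gamma(t)$ along the segment, which is well-defined and weakly monotone precisely by the defining inequality, with $h(0)=0$, $h(1)=1$; continuity of $h$ follows from continuity of $\gamma$ together with the fact that the segment is nondegenerate). Then $\gamma_S = \sigma_S \circ h$ is the composition of the weakly monotone short geodesic $\sigma_S$ with the weakly monotone surjection $h$, hence is itself a weakly monotone short geodesic in the sense of Section~\ref{sec:spherical geometry}. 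The main obstacle, and the only part requiring real care, is the monotonicity of the polar angle along the chord $\sigma$ — everything else is bookkeeping about reparametrizations — but this is a standard fact of planar geometry (the chord subtends angles at the origin that only increase as one moves along it in one direction, precisely because the chord does not separate from the origin), and can be verified by the signed-area computation indicated above.
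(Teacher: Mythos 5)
Your proposal is correct and follows essentially the same route as the paper's proof: dispose of the degenerate collinear case, reduce to the plane containing the origin and the segment, and observe that the polar angle along the chord is a weakly monotone function of the (weakly monotone) segment parameter, computed via an arctangent in suitable coordinates. The paper streamlines the bookkeeping by choosing coordinates so the segment lies on a horizontal line $\{(\cdot,c)\}$ with $c>0$ and writing $\theta(t)=\operatorname{Arctan}\bigl(c/((1-s(t))a+s(t)b)\bigr)\in(0,\pi)$ directly, but this is the same argument.
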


The geometric intuition behind the lemma should be clear.
We give an algebraic proof.

\begin{proof}
If $\gamma(\cdot)$ is contained in a line through the origin,
$\gamma_S(\cdot)$ is constant.

Suppose that $\gamma(\cdot)$ is not contained in a line
through the origin.
By choosing appropriate coordinates on the plane that contains
the set
$\{ 0 \} \cup \{ \gamma(t) \}$, we may assume that $n=2$
and that there exists $c>0$ such that $\gamma(\cdot)$
is contained in the horizontal line $\{ (\cdot,c) \}$.
Because $\gamma$ is weakly monotone straight,
there exist real numbers $a,b$ and a weakly monotone function
$s \co [0,1] \to [0,1]$ such that $s(0) = 0$, $s(1) = 1$, and
$ \gamma(t) = \left( \; (1-s(t))a + s(t)b \; , \; c \; \right) $.
Then
$ \gamma_S(t) = ( \cos \theta(t) , \sin \theta(t) ) $
where
\begin{equation} \labell{branch}
 \theta(t) \, = \, \text{Arctan} \frac{ c } { (1-s(t))a + s(t) b}
 \, \in \, (0,\pi).
\end{equation}
Because $t \mapsto s(t)$ is weakly monotone, so is the denominator
of~\eqref{branch}, and so is $t \mapsto \theta(t)$.
\end{proof}

We now give a converse result:

\begin{Lemma} \labell{straight2}
Let $\gamma_S \co [0,1] \to S^{n-1}$ be a weakly monotone
short geodesic. Let $\lambda_0$ and $\lambda_1$ be positive numbers.
Then there exists a weakly monotone straight
path $\gamma \co [0,1] \to \R^n$ such that
$\gamma(0) = \lambda_0 \gamma_S(0)$,
$\gamma(1) = \lambda_1 \gamma_S(1)$,
and $\gamma(t)/\| \gamma(t) \| = \gamma_S(t)$.
\end{Lemma}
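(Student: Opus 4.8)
The plan is to obtain $\gamma$ by reparametrizing the straight segment joining $P_0 := \lambda_0\gamma_S(0)$ to $P_1 := \lambda_1\gamma_S(1)$ so that its radial projection becomes $\gamma_S$. First I would dispose of the degenerate case: if $\gamma_S$ is constant, say $\gamma_S \equiv w$, then $\gamma(t) := \big((1-t)\lambda_0 + t\lambda_1\big)w$ is weakly monotone straight, has the prescribed endpoints, and satisfies $\gamma(t)/\|\gamma(t)\| = w = \gamma_S(t)$ because $(1-t)\lambda_0 + t\lambda_1 > 0$ on $[0,1]$.

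Assume now that $\gamma_S$ is not constant. Exactly as in the proof of Lemma~\ref{straight} --- choosing orthonormal coordinates adapted to the great circle carrying $\gamma_S$ and rotating inside it --- I may assume $n=2$ and $\gamma_S(t) = (\cos\theta(t),\sin\theta(t))$, where $\theta \co [0,1] \to [\theta_0,\theta_1]$ is continuous, onto and weakly monotone and where $0 < \theta_0 < \theta_1 < \pi$ (the arc has length $\theta_1-\theta_0 < \pi$, so a further rotation places it in the open upper half--plane). In particular $\gamma_S(0)$ and $\gamma_S(1)$ are neither equal nor antipodal, so $P_0$, $P_1$ and the origin are not collinear; hence the whole line through $P_0$ and $P_1$ misses the origin. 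Put $\sigma(u) := (1-u)P_0 + uP_1$. By Lemma~\ref{straight}, $\sigma_S := \sigma/\|\sigma\|$ is a weakly monotone short geodesic; write $\sigma_S(u) = (\cos\Psi(u),\sin\Psi(u))$ with $\Psi$ continuous. Since distinct points of the segment $[P_0,P_1]$ lie on distinct rays from the origin, $\Psi$ is injective, hence, being continuous on the interval $[0,1]$, a strictly monotone homeomorphism onto $[\theta_0,\theta_1]$; moreover $\Psi(0) = \theta(0)$ and $\Psi(1) = \theta(1)$, both sides being the angular coordinate of the common endpoints $\gamma_S(0)$, $\gamma_S(1)$.

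Finally I would set $r := \Psi^{-1}\circ\theta \co [0,1] \to [0,1]$ and $\gamma := \sigma\circ r$. Then $r$ is weakly monotone (a weakly monotone map followed by a monotone homeomorphism), with $r(0) = \Psi^{-1}(\theta(0)) = 0$ and $r(1) = 1$; hence $\gamma$ is continuous, $\gamma(0) = \sigma(0) = P_0 = \lambda_0\gamma_S(0)$, $\gamma(1) = \sigma(1) = P_1 = \lambda_1\gamma_S(1)$, it is weakly monotone straight because $\sigma$ is affine and $r$ is weakly monotone, and $\gamma(t)/\|\gamma(t)\| = \sigma_S(r(t)) = (\cos\theta(t),\sin\theta(t)) = \gamma_S(t)$. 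The one step that is not pure bookkeeping --- and the place I expect to have to argue most carefully --- is that $\Psi$ is a homeomorphism \emph{onto the full arc} $[\theta_0,\theta_1]$; this rests on the elementary but essential fact that the line through $P_0$ and $P_1$ avoids the origin, which in turn is a consequence of the endpoints of the short geodesic being neither equal nor antipodal.
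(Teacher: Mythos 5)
Your proof is correct and follows essentially the same route as the paper's: reduce to $n=2$ with the nonconstant arc in the open upper half--plane, then pass radially between the chord $[\lambda_0\gamma_S(0),\lambda_1\gamma_S(1)]$ and the arc. The paper just writes the radial projection in closed form --- placing the chord on a horizontal line $\{(\cdot,c)\}$ and setting $\gamma(t)=(c\cot\theta(t),c)$, so that weak monotonicity is immediate from that of $\cot\circ\theta$ --- whereas you realize the same map as a weakly monotone reparametrization $\sigma\circ\Psi^{-1}\circ\theta$ of the affine chord; both are fine.
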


\begin{proof}
If $\gamma_S(\cdot)$ is constant, set
$ \gamma(t) = (1-t) \lambda_0 \gamma_S(0)
                   + t \lambda_1 \gamma_S(1) $.

Suppose that $\gamma_S(\cdot)$ is not constant.
By choosing appropriate coordinates on the plane
that contains the set $\{ 0 \} \cup \{ \gamma_S(t) \}$,
we may assume that $n=2$ and that there exists a positive number $c$
such that $\lambda_0 \gamma_S(0)$ and $\lambda_1 \gamma_S(1)$
both belong to the horizontal line $\{ (\cdot , c) \}$.

Because $\gamma_S(\cdot)$ is a weakly monotone geodesic
that is contained in the upper half plane,
there exists a weakly monotone function
$ \theta \co [0,1] \to (0,\pi) $
such that
$ \gamma_S(t) = \left( \cos \theta(t) , \sin \theta(t) \right) $.
Project along radii to the line $\{ (\cdot,c) \}$: set
$$\gamma(t) = \left( c \cot\theta(t) , c \right) .$$
Then $t \mapsto \gamma(t)$ is weakly monotone straight,
and $\gamma / \| \gamma \| = \gamma_S$.
Also, $\gamma(0) = \lambda_0 \gamma_S(0)$
and $\gamma(1) = \lambda_1 \gamma_S(1)$.
\end{proof}

As before, $B(w,\eps)$ denotes the open ball of radius $\eps$
centered at $w$.

We recall an elementary fact that relates straightness
on spheres to straightness in vector spaces:

\begin{Lemma} \labell{radial projection}
There exists a homeomorphism $A$, from the upper hemisphere
$$ S_+^{n-1} = \{ (x_1, x_2,\dots,x_n) \in S^{n-1} \mid x_n > 0\} $$
onto $\R^{n-1}$, with the following properties.
Denote the north pole by $w_0$.
\begin{enumerate}
\item
For any $\eps > 0$, the map $A$ carries the ``cap"
$B(w_0,\eps) \cap S_+^{n-1}$
onto a ball centered at the origin in $\R^{n-1}$.
\item
Let $\gamma$ be a path in $S_+^{n-1}$. Then
$\gamma$ is a weakly monotone geodesic if and only if $A \circ
\gamma$ is a straight line segment in $\R^{n-1}$ with a weakly
monotone parametrization.
\end{enumerate}
\end{Lemma}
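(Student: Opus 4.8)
The statement to prove is Lemma~\ref{radial projection}: the existence of a homeomorphism $A \co S_+^{n-1} \to \R^{n-1}$ carrying caps around the north pole to balls around the origin, and carrying weakly monotone geodesics to weakly monotone parametrizations of straight segments.

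The plan is to use central (gnomonic) projection. Place the north pole $w_0 = (0,\dots,0,1)$ and consider the affine hyperplane $H = \{x_n = 1\} \subset \R^n$, which we identify with $\R^{n-1}$ via $(x_1,\dots,x_{n-1},1) \leftrightarrow (x_1,\dots,x_{n-1})$. Define $A \co S_+^{n-1} \to \R^{n-1}$ by sending a point $v \in S_+^{n-1}$ to the unique intersection of the ray $\R_{>0} \cdot v$ with $H$; explicitly, $A(x_1,\dots,x_n) = (x_1/x_n, \dots, x_{n-1}/x_n)$. This is a well-defined continuous map since $x_n > 0$ on $S_+^{n-1}$, and it has continuous inverse $y \mapsto (y,1)/\|(y,1)\|$, so $A$ is a homeomorphism.

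For property (2), the key observation is that central projection carries great circles to straight lines: a great circle is $S^{n-1} \cap P$ for a $2$-plane $P$ through the origin, and $A(S_+^{n-1} \cap P) = H \cap P$, which is an affine line (or empty) since $P$ is a plane through the origin meeting the affine hyperplane $H$. Conversely every affine line in $H$ arises this way from the plane it spans with the origin. So the images of geodesic arcs are straight segments. It remains to check that the parametrization issue is preserved, i.e.\ that $A$ carries weakly monotone parametrizations to weakly monotone parametrizations in both directions. For this I would reduce to the planar case exactly as in Lemmas~\ref{straight} and~\ref{straight2}: restrict attention to the $2$-plane $P$ containing the geodesic, choose coordinates so that $P$ is the $x_1 x_n$-plane and the geodesic lies in the upper half of the unit circle, write $\gamma_S(t) = (\cos\theta(t), \sin\theta(t))$, and compute $A \circ \gamma_S(t) = \cot\theta(t)$. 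Since $\theta \mapsto \cot\theta$ is a strictly monotone (decreasing) homeomorphism from $(0,\pi)$ onto $\R$, the parametrization $t \mapsto A\circ\gamma_S(t)$ is weakly monotone if and only if $t \mapsto \theta(t)$ is weakly monotone, which by definition is equivalent to $\gamma_S$ being a weakly monotone geodesic. This gives the ``if and only if'' in (2).

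For property (1), I would note that $A$ is equivariant for the action of $O(n-1)$ fixing the $x_n$-axis: rotations about the north pole correspond under $A$ to rotations about the origin in $\R^{n-1}$. A cap $B(w_0,\eps) \cap S_+^{n-1}$ is exactly the set of points on $S_+^{n-1}$ whose spherical (or Euclidean chordal) distance to $w_0$ is less than some threshold, hence it is a single $O(n-1)$-orbit's worth of ``latitude circles'', i.e.\ of the form $\{x_n > c\} \cap S_+^{n-1}$ for the appropriate $c \in (0,1)$ when $\eps$ is small, and all of $S_+^{n-1}$ when $\eps$ is large. Under $A$, the condition $x_n > c$ becomes $\|(y,1)\|^{-1} > c$, i.e.\ $\|y\|^2 < c^{-2} - 1$, a ball about the origin; and $S_+^{n-1}$ itself maps onto all of $\R^{n-1}$. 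So caps go to balls centered at the origin, as claimed.

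The routine parts here are the explicit formula for $A$ and its inverse and the latitude computation in (1); the only point requiring a little care is the equivalence of monotonicity in (2), and that is handled cleanly by the $\cot$ substitution, so I do not anticipate a serious obstacle — the lemma is genuinely elementary, and the work is mostly in setting up coordinates consistently with the conventions already fixed in Section~\ref{sec:spherical geometry}.
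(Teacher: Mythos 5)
Your proof is correct and follows essentially the same route as the paper: both use the gnomonic projection $A(x_1,\dots,x_n)=(x_1/x_n,\dots,x_{n-1}/x_n)$ onto the hyperplane $\{x_n=1\}$, verify that great half--circles map to lines, and reduce the monotonicity claim to the weak monotonicity of $\cot\theta(t)$. The only cosmetic differences are that you spell out the latitude computation for part (1), which the paper simply asserts, and you get both directions of (2) from the strict monotonicity of $\cot$ rather than citing Lemma~\ref{straight} for the converse.
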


\begin{proof}
We set $A$ to be the projection from $S_+^{n-1}$ to the hyperplane
$\{ ( \cdot , \ldots , \cdot , 1 ) \}$
along rays emanating from the origin, followed by the projection
$\R^{n-1} \times \{ 1 \} \to \R^{n-1}$.
That is, $A(x_1,\ldots,x_{n-1},x_n)
 = ( x_1/x_n , \ldots , x_{n-1}/x_n )$.
This is a homeomorphism with inverse
$(y_1,\ldots,y_{n-1}) \mapsto
 \frac{1}{\sqrt{y_1^2+\ldots+y_{n-1}^2+1}} (y_1,\ldots,y_{n-1},1)$.
It maps the north pole $\omega_0$ to the origin, and a cap $B(w_0,\eps)
\cap S_+^{n-1}$ to a ball centered at the origin in $\R^{n-1}$.
If $P$ is a 2--plane through the origin in $\R^n$,
the map $A$ carries the great half--circle $P \cap S_+^{n-1}$
to the straight line $P \cap \{x_n=1\}$.

Every weakly monotone geodesic in $S^{n-1}_+$ has the form
$$ t \mapsto \left( \alpha \cos \theta(t) , \sin \theta(t) \right) $$
where $\alpha$ is in $S^{n-2}$ and $\theta \co [0,1] \to (0,\pi)$
is weakly monotone.  The map $A$ carries this geodesic to the path
$$ t \mapsto \alpha \cot \theta(t) .$$
Because the function $\cot(\cdot)$ is weakly monotone,
the path is weakly monotone straight.  Thus, the map $A$
carries a weakly monotone short geodesic in $S^{n-1}_+$
to a weakly monotone straight path in $\R^{n-1}$.
Conversely, the inverse map, $A^{-1}(x) = (x, 1) / \| (x, 1) \| $,
carries a weakly monotone straight path 
in $\R^{n-1}\cong \R^{n-1} \times \{ 1 \}$
to a weakly monotone short geodesic in $S^{n-1}_+$,
by Lemma~\ref{straight}.
\end{proof}

We now recall a property of spherical triangles:

\begin{Lemma} \labell{triangle}
In a spherical right triangle contained in a half--sphere $S^{n-1}_+$,
if one leg has length $ > \pi / 2$, then that leg
is longer than the hypotenuse.
\end{Lemma}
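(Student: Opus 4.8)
The plan is to reduce the statement to an explicit computation on a right spherical triangle using the spherical law of cosines, which in the present half-sphere setting can be justified by elementary plane trigonometry. First I would set up coordinates: place the right-angle vertex $C$ at the north pole $w_0$, let the triangle have legs along two great half-circles through $w_0$ of lengths $a$ and $b$, and hypotenuse of length $c$ opposite to $C$. For a spherical right triangle the relevant identity is $\cos c = \cos a \cos b$; I would derive this directly from the coordinates $C = (0,\dots,0,1)$, $B = (\sin a\, u, \cos a)$ and $A = (\sin b\, v, \cos b)$ for unit vectors $u, v \in S^{n-2}$ meeting at a right angle (so $\langle u,v\rangle = 0$), whence $\cos c = \langle A, B\rangle = \cos a \cos b$.

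Given $\cos c = \cos a \cos b$, suppose the leg of length $a$ satisfies $a > \pi/2$, so $\cos a < 0$. Since the triangle is contained in the half-sphere $S^{n-1}_+$, all three vertices have positive last coordinate, forcing $a, b, c \in (0,\pi)$; in particular $\cos b > -1$ and more importantly $b < \pi$ so $\cos b$ can be as small as just above $-1$, but I need $b < \pi/2$ or at least enough to conclude. Actually the clean argument is: from $\cos c = \cos a \cos b$ and $\cos a < 0$ we get that $\cos c$ and $\cos b$ have opposite signs (or $\cos b = 0$), and $|\cos c| = |\cos a|\,|\cos b| \leq |\cos b|$ since $|\cos a| \le 1$. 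If $\cos b \geq 0$ then $\cos c \leq 0 \le \cos b$ with $|\cos c| \le \cos b$, giving $\cos c \geq -\cos b$; combined with $\cos c \le 0 \le \cos b$ and strict inequality $|\cos a|<1$ unless $a=\pi$ (excluded), we get $\cos c > -|\cos b| \ge$ ... — I would instead simply observe that since $0 < a,b,c < \pi$, the cosine is a strictly decreasing bijection onto $(-1,1)$, so $c > a$ is equivalent to $\cos c < \cos a$. And indeed $\cos c = \cos a \cos b$ with $|\cos b| < 1$ (as $0 < b < \pi$) and $\cos a < 0$ gives $\cos c = \cos a \cos b > \cos a \cdot 1 = \cos a$, i.e. $\cos c > \cos a$, hence $c < a$. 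That is backwards from what we want.

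Let me reconsider: we want the leg of length $a > \pi/2$ to be \emph{longer} than the hypotenuse, i.e. $a > c$, i.e. $\cos a < \cos c$. From $\cos c = \cos a \cos b$: since $0 < b < \pi$ we have $\cos b < 1$, but we need a lower bound on $\cos b$. Here the hemisphere hypothesis must enter more forcefully — in $S^{n-1}_+$ a great half-circle has length $\pi$, but a \emph{geodesic segment between two points of} $S^{n-1}_+$ along such a half-circle also has length $< \pi$; crucially the leg of length $b$, being a short geodesic inside the hemisphere, satisfies $b < \pi$, which is all we used. The resolution is that since $\cos a < 0$, multiplying the inequality $\cos b < 1$ by the \emph{negative} number $\cos a$ reverses it: $\cos a \cos b > \cos a \cdot 1$, so $\cos c > \cos a$, which does give $c < a$. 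So the correct reading of "that leg is longer than the hypotenuse" is exactly $a > c$, and the proof is: in coordinates derive $\cos c = \cos a \cos b$, note $0<a,b,c<\pi$ so cosine is a decreasing bijection on this range, use $\cos a < 0$ and $\cos b < 1$ to get $\cos c = \cos a\cos b > \cos a$, hence $c < a$. The main obstacle — really the only non-automatic point — is justifying $\cos c = \cos a \cos b$ for a right spherical triangle in arbitrary dimension and confirming that all side lengths lie in $(0,\pi)$ because the triangle sits in an open hemisphere; both follow from writing the vertices in coordinates as above, with the right angle at $C$ translating into orthogonality $\langle u, v \rangle = 0$ of the initial tangent directions.

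\begin{proof}
Place the right-angle vertex at the north pole $w_0 = (0,\dots,0,1)$.
Since the triangle lies in the open hemisphere $S^{n-1}_+$, each side
is a short geodesic joining two points with positive last coordinate,
so each side has length in $(0,\pi)$; in particular the side lengths
$a$, $b$ (the legs adjacent to the right angle) and $c$ (the hypotenuse)
all lie in $(0,\pi)$.
The two legs emanate from $w_0$ along great half-circles, so the
vertices other than $w_0$ can be written as
$B = (\sin a \cdot u, \cos a)$ and $A = (\sin b \cdot v, \cos b)$
for some unit vectors $u, v \in S^{n-2} \subset \R^{n-1}$, where
$\cos a, \cos b$ may be of either sign because $a, b$ may exceed $\pi/2$.
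The angle of the triangle at $w_0$ is the angle between the initial
directions of the two legs, which is the angle between $u$ and $v$;
the right-angle hypothesis gives $\la u, v \ra = 0$.
Hence
\[
 \cos c = \la A, B \ra = \sin a \sin b \la u, v \ra + \cos a \cos b
        = \cos a \cos b .
\]
Now suppose the leg of length $a$ satisfies $a > \pi/2$, so $\cos a < 0$.
Since $0 < b < \pi$ we have $\cos b < 1$, and multiplying this strict
inequality by the negative number $\cos a$ reverses it:
$\cos c = \cos a \cos b > \cos a$.
On $(0,\pi)$ the cosine is a strictly decreasing bijection onto $(-1,1)$,
so $\cos c > \cos a$ forces $c < a$; that is, the leg of length $a$
is longer than the hypotenuse.
\end{proof}
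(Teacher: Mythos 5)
Your proof is correct and rests on the same key identity as the paper's, namely $\cos c = \cos a \cos b$ for a right spherical triangle; the paper cites this as the fundamental formula of spherical trigonometry (after first reducing to $S^2_+$), whereas you derive it directly from coordinates, which works in any dimension. Your endgame is also slightly cleaner: multiplying $\cos b < 1$ by the negative number $\cos a$ gives $\cos c > \cos a$ in one line, replacing the paper's case split on whether $b \geq \pi/2$.
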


\begin{proof}
Given a spherical triangle in $S^{n-1}_+$, its vertices span a
three dimensional linear subspace; the intersection of this
subspace with $S^{n-1}_+$ is a two dimensional hemisphere that contains the
triangle. Hence it suffices to prove the lemma for a spherical triangle in
$S^2_+$.

Let $A, B, C$ denote the vertices of a spherical triangle in
$S^2_+$ and $a, b, c$ their facing arc lengths. Then $0< a, b, c<\pi$.
Suppose that the angle at $C$ is a right angle and that $a > \pi/2$.
The fundamental formula of spherical trigonometry
\cite[\S18.6]{berger} implies that
$\cos c = \cos a \cos b$, which implies that
$c \leq \pi/2 < a$ if $b \geq \pi/2$,
and $\pi/2 < c < a$ if $b<\pi/2$.
Therefore, the leg $a > \pi/2$ is always longer
than the hypotenuse $c$.
\end{proof}

% ===================================
\section{Spherically convex maps}
% ===================================
\labell{sec:spherically convex maps}

We now consider maps to spheres.
In this section, for simplicity, we restrict attention
to the sphere of radius one.
In later sections we take the freedom to use the same
terminology and results for spheres of arbitrary radii.

Recall that 
$B(w,\eps)$ denotes the open ball of radius $\eps$ centered at $w$.
Let $B_\eps$ be a shorthand for $B(0, \eps)$ and $\del B_\eps$ be its boundary.

\begin{Definition} \labell{spherically convex map}
A map $\psi$ from a topological space $X$ to the sphere $S^{n-1}$
is a \emph{spherically convex map}
if for every two points $x_0$ and $x_1$ in $X$,
if $\psi(x_1) \neq -\psi(x_0)$,
then there exists a path $\gamma \co [0,1] \to X$
such that $\gamma(0) = x_0$, $\gamma(1) = x_1$,
and the composition $\psi \circ \gamma \co [0,1] \to S^{n-1}$
is a weakly monotone short geodesic.
\end{Definition}

\begin{Remark} \labell{inclusion map}
A subset $X$ of $S^{n-1}$ is spherically convex
in the sense of Section~\ref{sec:spherical geometry}
exactly if the inclusion map
$\psi \co X \to S^{n-1}$ is a spherically convex map.
\end{Remark}

\begin{Remark} \labell{restrict}
Suppose that $\psi \co X \to S^{n-1}$ is spherically convex 
and let $A$ be a subset of $\R^n$.
If $A \cap S^{n-1}$ is spherically convex,
then the map $\psi|_{\psi\Inv(A)}$ is spherically convex.
If, additionally, $A \cap S^{n-1}$ does not consist of a single pair
of antipodal points, then $\psi\Inv(A)$ is connected.
In particular, if $H$ is an open half--space whose boundary contains the origin,
then $\psi|_{\psi\Inv(H)}$ is spherically convex
and $\psi\Inv(H)$ is connected.
More generally, if $w\in S^{n-1}$ and if $\eps>0$ is sufficiently small 
so that $B(w,\eps) \cap S^{n-1}$ is contained in an open hemisphere,
then $\psi|_{\psi\Inv(B(w,\eps))}$ is spherically convex
and $\psi\Inv(B(w,\eps))$ is connected.
\end{Remark}

We will adjust arguments of~\cite{karshon-marshall} to our needs.
To start, we have the following variant of Proposition~17
of~\cite{karshon-marshall}.

\begin{Lemma} \labell{good U}
Let $\varphi$ be a continuous map from a Hausdorff topological space
to $\R^n$.
Let $K$ be a compact connected subset of the level set $\varphi\Inv(0)$.
Suppose that each point $x$ in $K$ has an open
neighborhood $U_x$ such that the map
$\varphi|_{U_x} \co U_x \to \varphi(U_x)$
is convex and open.
Then there exists an open neighborhood $U_K$ of $K$
such that the map $ \varphi|_{U_K} \co {U_K} \to \varphi(U_K)$
is convex and open.

Moreover,
we can choose $U_K$ such that the following conditions hold.

\begin{itemize}
\item
Suppose that for each $x$ in $K$ and each sufficiently small $\delta > 0$, 
every point in
$\varphi\Inv(\del B_\delta) \cap U_x$ has a neighborhood $V \subset U_x$ 
such that
the restriction $\varphi|_{\varphi\Inv(\del B_\delta) \cap V}$
is spherically convex. Then we can choose $U_K$ so that,
for sufficiently small $\delta>0$,
every point in $\varphi\Inv(\del B_\delta) \cap U_K$
has a neighborhood $U \subset U_K$ such that
$\varphi|_{\varphi\Inv(\del B_\delta) \cap U}$
is also spherically convex.
\item
Suppose that for each $x$ in $K$ there exists
a cone $C_x$ with vertex at the origin
such that $\varphi(U_x)$ is an open subset of $C_x$.
Then the cones $C_x$ are all equal to each other, and we can choose $U_K$ 
so that $\varphi(U_K)$ is also an open subset of this common cone.
\end{itemize}
\end{Lemma}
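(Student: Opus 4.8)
The plan is to imitate the proof of Proposition~17 of \cite{karshon-marshall}, but to carry the extra structure (spherical convexity on the spheres $\varphi\Inv(\del B_\delta)$, and the cone containing $\varphi(U_x)$) along through the construction. First I would recall the skeleton of that argument: starting from the open cover $\{U_x\}_{x\in K}$ of the compact connected set $K$, one extracts a finite subcover $U_{x_1},\dots,U_{x_m}$; since $K$ is connected one may reorder so that each $U_{x_j}$ meets $U_{x_1}\cup\dots\cup U_{x_{j-1}}$; then one builds $U_K$ as a union of suitably shrunk pieces $W_j\subset U_{x_j}$, chosen so that the overlaps $W_j\cap(W_1\cup\dots\cup W_{j-1})$ are nonempty and connected and so that the convexity and openness of $\varphi|_{W_j}$ to its image are preserved. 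The gluing works because a map built from pieces on which $\varphi$ is convex and open to its image, overlapping in connected sets, is again convex and open to its image (this is exactly the local-to-global mechanism of \cite{karshon-marshall}, specialized here to a neighborhood of a single compact connected level-set component rather than to all of $X$).

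Next I would address the first ``Moreover'' bullet. Fix a small $\delta>0$. Each point $z\in\varphi\Inv(\del B_\delta)\cap U_K$ lies in some $W_j\subset U_{x_j}$, hence in $U_{x_j}$, and by hypothesis has a neighborhood $V\subset U_{x_j}$ on which $\varphi|_{\varphi\Inv(\del B_\delta)\cap V}$ is spherically convex; intersecting, $U:=V\cap W_j$ is a neighborhood of $z$ contained in $U_K$, and spherical convexity of $\varphi|_{\varphi\Inv(\del B_\delta)\cap U}$ follows because spherical convexity passes to $\varphi$-preimages of spherically convex subsets of $\R^n$ (here a subset of $V$), exactly as in Remark~\ref{restrict}. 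The only subtlety is quantifying over $\delta$: ``sufficiently small'' in the hypothesis may depend on $x$, but since there are only finitely many $x_j$, taking $\delta$ smaller than all the finitely many thresholds makes the statement uniform on $U_K$. For the second bullet, the point is that on each overlap $W_i\cap W_j$ (when nonempty) the open sets $\varphi(W_i\cap W_j)$ is an open subset both of $C_{x_i}$ and of $C_{x_j}$; a nonempty open subset of $\R^n$ contained in two cones with vertex at the origin forces those cones to coincide (a cone is determined by any of its open subsets together with the vertex). Chaining along the connected sequence of overlaps gives $C_{x_1}=\dots=C_{x_m}=:C$, and then $\varphi(U_K)=\bigcup_j\varphi(W_j)$ is a union of open subsets of $C$, hence an open subset of $C$.

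The one genuinely delicate step — and the place where I would spend the most care — is the construction of the shrunk pieces $W_j$ so that \emph{all three} properties survive simultaneously: (i) $\varphi|_{W_j}$ remains convex and open to its image, (ii) the overlaps stay connected, and (iii) shrinking does not spoil the spherical-convexity and cone hypotheses. For (i) and (ii) this is precisely the technical heart of Proposition~17 of \cite{karshon-marshall}, which I would cite rather than reprove; what needs checking is that the shrinking used there can be done inside the given $U_x$ and that it is compatible with the additional hypotheses. Since the spherical-convexity hypothesis in the first bullet is phrased ``every point has a neighborhood $V\subset U_x$ such that \dots'', it is automatically inherited by any open subset of $U_x$ (one just intersects $V$ with the subset), so (iii) poses no real obstruction; and the cone hypothesis is likewise inherited because an open subset of an open subset of a cone is an open subset of that cone. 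Thus the real work is bookkeeping: verify that the \cite{karshon-marshall} construction of $U_K$ can be performed with each $W_j\subset U_{x_j}$, and then read off the two extra conclusions from the finiteness of the cover and the connectedness of the chain of overlaps.
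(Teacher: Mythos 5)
Your outline of the two ``Moreover'' bullets is essentially right, but the core of the lemma --- producing $U_K$ on which $\varphi$ is convex and open --- rests on a gluing principle that is false. You claim that ``a map built from pieces on which $\varphi$ is convex and open to its image, overlapping in connected sets, is again convex and open to its image.'' It is not: take $X$ to be the union of $\{x>0,\ |y|\le 1\}$ and the two points $(0,\pm 1)$ in $\R^2$ with $\varphi$ the projection to the $x$--axis (the paper's Example~\ref{example R2}(1)); the two open pieces $W_\pm=\{x>0,\ |y|\le1\}\cup\{(0,\pm1)\}$ are each convex and open to their images and overlap in the connected set $\{x>0,\ |y|\le 1\}$, yet $\varphi|_{W_+\cup W_-}$ has the disconnected level set $\{(0,1),(0,-1)\}$ and so is not convex. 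Tellingly, your construction never uses the hypothesis $K\subset\varphi\Inv(0)$, and that hypothesis is exactly what rescues the argument. The actual mechanism is: for each pair with $U_i\cap U_j\cap K\neq\varnothing$, the set $\varphi(U_i\cap U_j)$ contains $0$ and is open in $\varphi(U_i)$, so $\varphi(U_i\cap U_j)\cap B_{\eps_{ij}}=\varphi(U_i)\cap B_{\eps_{ij}}$ for some $\eps_{ij}>0$; taking $\eps$ to be the minimum and chaining along the connected set $K$ shows that all the sets $\varphi(U_i)\cap B_\eps$ are equal to one common set $W$. One then sets $U_K=(U_1\cup\dots\cup U_N)\cap\varphi\Inv(B_\eps)$. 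Because every piece now has the \emph{same} image $W$, every level set of $\varphi|_{U_K}$ meets every $U_i$ and every relevant overlap, which gives connected level sets; convexity follows by first hitting the target value $\varphi(x_1)$ inside the single piece containing $x_0$ and then travelling within the (connected) level set; openness follows because each $\varphi|_{U_i\cap\varphi\Inv(B_\eps)}$ is open onto $W$. Without the restriction to $\varphi\Inv(B_\eps)$ and the identification of the images, none of this goes through.

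Two smaller points. First, the reordering you invoke should be ``each $U_{x_j}$ meets the union of the previous ones \emph{on $K$}''; that is what connectedness of $K$ gives, and it is also what guarantees that the overlaps you use for the cone argument contain points of $\varphi\Inv(0)$. Second, your parenthetical ``a cone is determined by any of its open subsets together with the vertex'' is false for a general relatively open subset (a relatively open subset of a closed half--space that avoids the boundary hyperplane is also open in $\R^n$, and these are different cones); it is true for a relatively open subset \emph{containing} the vertex, since then the cone equals $\Rplus$ times that subset. Your argument is salvageable on this point because the overlaps meet $K$, but in the correct construction the cone bullet is immediate: $\varphi(U_K)=W=\varphi(U_i)\cap B_\eps$ for every $i$, so each $C_{x_i}$ equals $\Rplus\cdot W$ and $W$ is open in it.
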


\begin{Remark} \labell{vertex at origin}
A \emph{cone with vertex at the origin} is a set that is invariant 
under multiplication by positive numbers; cf.~\cite[\S 11.1.6]{bergerI}.
Some authors require a cone to also contain its vertex;
cf.\ \cite[II.(8.1)]{barvinok}. We only use this term for 
sets that already contain the origin.
\end{Remark}

\begin{proof}[Proof of Lemma \ref{good U}]
Let $U_1,\ldots,U_N$ be open sets such that
$$ U_i \cap K \neq \varnothing, $$
$$ K \subset U_1 \cup \ldots \cup U_N , $$
and
$$ \varphi|_{U_i} \co U_i \to \varphi(U_i)
\quad \text{ is convex and open.} $$

Suppose $U_i \cap U_j \cap K \neq \varnothing$.
Then $\varphi(U_i \cap U_j)$ contains $0$.
Since $U_i \cap U_j$ is open in $U_i$ and
the map $\varphi|_{U_i} \co U_i \to \varphi(U_i)$ is open,
there exists a positive number $\eps_{ij}$
such that $\varphi(U_i \cap U_j) \cap B_{\eps_{ij}}
 = \varphi(U_i) \cap B_{\eps_{ij}}$.

Let $\eps = \min \{ \eps_{ij} \ | \ U_i \cap U_j \cap K \neq \varnothing \}$.
Then $\varphi(U_i) \cap B_{\eps}
    = \varphi(U_j) \cap B_{\eps}
    = \varphi(U_i \cap U_j) \cap B_{\eps}$
whenever $U_i \cap U_j \cap K \neq \varnothing$.

Because $K$ is connected, every $U_i$ and $U_j$
can be connected by a sequence $U_i = U_{i_0},
U_{i_1}, \ldots, U_{i_s} = U_j$
such that $U_{i_{\ell-1}} \cap U_{i_\ell} \cap K \neq \varnothing$
for $\ell = 1,\ldots,s$.
So the set $\varphi(U_i) \cap B_{\eps}$
is the same for all $i$ and is equal
to $\varphi(U_i \cap U_j) \cap B_{\eps}$
whenever $U_i \cap U_j \cap K \neq \varnothing$.
Call this set $W$.
Let
$$U_K = (U_1 \cup \ldots \cup U_N) \cap \varphi\Inv(B_{\eps}).$$
Then
\begin{eqnarray*}
 \varphi(U_K) & = & \varphi(U_i) \cap B_{\eps}
                           \qquad \text{ for all }i \\
 & = & \varphi(U_i \cap U_j) \cap B_{\eps} \qquad
                      \text{ if } U_i \cap U_j \cap K \neq \varnothing \\
 & = & W.
\end{eqnarray*}

The level sets of $\varphi|_{U_K} \co U_K \to W$ are connected.
This follows from the following facts:
\begin{enumerate}
\item
For every $U_i$, every level set of $\varphi|_{U_K} \co U_K \to W$
meets $U_i$.
\item
Whenever $U_i \cap U_j \cap K \neq \varnothing$,
every level set of $\varphi|_{U_K} \co U_K \to W$
meets $U_i \cap U_j$.
\item
Every $U_i$ and $U_j$ can be connected by a sequence
$U_i = U_{i_0}, U_{i_1}, \ldots, U_{i_s} = U_j$
such that $U_{i_{\ell-1}} \cap U_{i_\ell} \cap K \neq \varnothing$.
\item
The level sets of each $\varphi|_{U_i}$ are connected.
\end{enumerate}

We now show that $\varphi|_{U_K}$ is convex.
Let $x_0$ and $x_1$ be points in $U_K$.
Then $x_0$ is contained in some $U_i$.
Because $\varphi(U_i)$ contains $\varphi(U_K)$,
there exists a point $\ol{x}_1$ in $U_i$
with $\varphi(\ol{x}_1) = \varphi(x_1)$;
because $\varphi|_{U_i}$ is convex, there exists a path
in $U_i$ from $x_0$ to $\ol{x}_1$ whose composition with $\varphi$ is 
weakly monotone straight and
whose image in $\R^n$ is $[\varphi(x_0),\varphi(x_1)]$.
Because the level sets of $\varphi|_{U_K}$ are connected,
there exists a path from $\ol{x}_1$ to $x_1$
that is contained in $\varphi\Inv (\varphi(x_1)) $;
the concatenation of the path from $x_0$ to $\ol{x}_1$
with the path from $\ol{x}_1$ to $x_1$
is a path from $x_0$ to $x_1$ whose composition with $\varphi$ 
is weakly monotone straight
with image
$[\varphi(x_0),\varphi(x_1)]$.
This shows that $\varphi|_{U_K}$ is convex.

Since the map
$\varphi|_{U_i \cap \varphi\Inv(B_{\eps})} \co
 U_i \cap \varphi\Inv(B_{\eps}) \to W$
is open for every $i$,
the map $\varphi|_{U_K} \co U_K \to W$ is open.

Suppose that $\delta_i > 0 $ is such that,
for all $0 < \delta < \delta_i$, every point 
in $\varphi\Inv(\del B_\delta) \cap U_i$ has a neighborhood $V \subset U_i$ 
such that
the restriction $\varphi|_{\varphi\Inv(\del B_\delta) \cap V}$
is spherically convex.
Then, for every $0 < \delta < \min \{ \eps, \delta_1, \ldots, \delta_N \}$,
because $U_K \cap \varphi\Inv(\del B_\delta)$
is the union of the sets $U_i \cap \varphi\Inv(\del B_\delta)$,
every point in $U_K \cap \varphi\Inv(\del B_\delta)$
has a neighborhood $U$ (namely, $U=V\cap \varphi\Inv(B_\eps)$ 
for some $V \subset U_i$)
such that the restriction of $\varphi$
to $U \cap \varphi\Inv(\del B_\delta)$ is spherically convex.

Suppose that $\varphi(U_i)$ is an open subset of a cone
with vertex at the origin.
Then, because $\varphi(U_i) \cap B_\eps = W$,
this cone is equal to $\Rplus \cdot W$, and $W$ is open in the cone.
In particular, the cone is independent of $i$.
Then $\varphi(U_K)$, being also equal to $W$,
is an open subset of this cone.
\end{proof}

\begin{Notation} \labell{notation:component}
Fix a map $\psi \co X \to S^{n-1}$.
For $x \in X$ with $\psi(x) = w$, we denote by $[x]$
the connected component of $x$ in $\psi\Inv(w)$,
and we denote by $U_{[x],\eps}$ the connected component of $x$
in $\psi\Inv(B(w,\eps))$.
\end{Notation}

We now give another variant of Proposition~17 of~\cite{karshon-marshall},
which applies to a spherical map:

\begin{Lemma} \labell{spherical intgoalB}
Let $X$ be a compact Hausdorff topological space, and let
$\psi \co X \to S^{n-1}$ be a continuous map.
Suppose that every point in $X$ is contained in an open set $U$
such that the map $\psi|_U \co U \to \Phi(U)$ is open
and is spherically convex. Then for every point $x$ in $X$
and every sufficiently small $\eps > 0$, the map
$\psi|_{U_{[x],\eps}} \co U_{[x],\eps} \to \psi(U_{[x],\eps})$
is open and spherically convex.
\end{Lemma}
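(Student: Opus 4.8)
The statement to be proved, Lemma~\ref{spherical intgoalB}, is the spherical analogue of Proposition~17 of~\cite{karshon-marshall}, and it should follow the same pattern as the proof of Lemma~\ref{good U}, but with geodesics replacing segments and with open balls $B(w,\eps)$ on $S^{n-1}$ replacing balls centered at the origin in $\R^n$. I would fix a point $x \in X$ with $\psi(x) = w$, and I would work with $[x]$, the connected component of $x$ in the level set $\psi\Inv(w)$, which is compact (being closed in the compact space $X$) and connected.

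First I would reduce to a statement about a neighborhood of the compact connected set $[x]$. Cover $[x]$ by finitely many open sets $U_1,\dots,U_N$, each meeting $[x]$, on each of which $\psi|_{U_i} \co U_i \to \psi(U_i)$ is open and spherically convex; this uses compactness of $[x]$. Whenever $U_i \cap U_j \cap [x] \neq \varnothing$, the set $\psi(U_i \cap U_j)$ contains $w$, and since $U_i \cap U_j$ is open in $U_i$ and $\psi|_{U_i}$ is open, there is $\eps_{ij} > 0$ with $\psi(U_i \cap U_j) \cap B(w,\eps_{ij}) = \psi(U_i) \cap B(w,\eps_{ij})$. Taking $\eps$ to be the minimum of the $\eps_{ij}$ and shrinking further so that $B(w,\eps) \cap S^{n-1}$ lies in an open hemisphere (so that it is spherically convex), and using connectedness of $[x]$ to chain the $U_i$ together, I would conclude that $\psi(U_i) \cap B(w,\eps)$ is the same set $W$ for all $i$. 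Then I would set $U_{[x],\eps}$ to be the connected component of $x$ in $(U_1 \cup \dots \cup U_N) \cap \psi\Inv(B(w,\eps))$, which for $\eps$ small coincides with the component named in Notation~\ref{notation:component}, and I would check $\psi(U_{[x],\eps}) = W$.

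Next, openness. Since each $\psi|_{U_i \cap \psi\Inv(B(w,\eps))} \co U_i \cap \psi\Inv(B(w,\eps)) \to W$ is open, and $U_{[x],\eps}$ is a union of (relatively open pieces of) such sets, the map $\psi|_{U_{[x],\eps}} \co U_{[x],\eps} \to W$ is open. Then connectedness of level sets: every level set of $\psi|_{U_{[x],\eps}} \to W$ meets every $U_i$, hence meets $U_i \cap U_j$ whenever $U_i \cap U_j \cap [x] \neq \varnothing$ (by openness and the fact that $W \subset \psi(U_i \cap U_j)$), and since the $U_i$ can be chained along intersections meeting $[x]$ and each $\psi|_{U_i}$ has connected level sets, the level sets of $\psi|_{U_{[x],\eps}}$ are connected. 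Finally, spherical convexity: given $x_0, x_1 \in U_{[x],\eps}$ with $\psi(x_1) \neq -\psi(x_0)$, pick $U_i$ containing $x_0$, find $\ol{x}_1 \in U_i$ with $\psi(\ol{x}_1) = \psi(x_1)$ (possible since $\psi(U_i) \supset W \ni \psi(x_1)$), join $x_0$ to $\ol{x}_1$ by a path in $U_i$ whose $\psi$-image is a weakly monotone short geodesic from $\psi(x_0)$ to $\psi(x_1)$ (using spherical convexity of $\psi|_{U_i}$; note the non-antipodal hypothesis is what makes this applicable), then join $\ol{x}_1$ to $x_1$ inside the connected level set $\psi\Inv(\psi(x_1)) \cap U_{[x],\eps}$, and concatenate. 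The one subtlety to watch is that the geodesic produced must actually be \emph{short} and must stay in $W$: since its endpoints lie in $W \subset B(w,\eps) \cap S^{n-1}$, which is contained in an open hemisphere and hence spherically convex, the short geodesic between them lies in that hemisphere, has length $< \pi$, and — after shrinking $\eps$ if necessary so that $U_i$'s geodesic stays inside $\psi\Inv(B(w,\eps))$ — lies over $W$.

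**Main obstacle.** The step I expect to require the most care is verifying that the concatenated path's composition with $\psi$ is genuinely a single weakly monotone short geodesic, not merely a concatenation of two geodesic arcs: the second arc is constant (it lives in a level set), so concatenating a weakly monotone short geodesic with a constant path again yields a weakly monotone short geodesic, which is fine — but one must make sure the first arc's $\psi$-image is exactly the geodesic $[\psi(x_0),\psi(x_1)]$ and that it does not leave $B(w,\eps)$, which is where the choice $B(w,\eps)\cap S^{n-1}$ inside a hemisphere, together with the compatibility $\psi(U_i)\cap B(w,\eps)=W$, is used. A secondary point is the bookkeeping that the component $U_{[x],\eps}$ defined via the cover agrees with $U_{[x],\eps}$ from Notation~\ref{notation:component} for all sufficiently small $\eps$; this is routine but should be stated. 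None of this is deep; it is the same argument as in Lemma~\ref{good U} with ``segment'' replaced by ``short geodesic'' and the relevant convex neighborhood of the origin replaced by a spherically convex cap.
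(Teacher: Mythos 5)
Your argument is correct, but it takes a different route from the paper's. You redo the proof of Lemma~\ref{good U} from scratch in the spherical setting, replacing segments by short geodesics and the balls $B_\delta$ by caps $B(w,\eps)\cap S^{n-1}$ inside an open hemisphere. The paper instead places $w=\psi(x)$ at the north pole and uses the radial--projection homeomorphism $A\co S^{n-1}_+\to\R^{n-1}$ of Lemma~\ref{radial projection}, which turns weakly monotone short geodesics into weakly monotone straight paths and caps into balls; it then invokes Lemma~\ref{good U} as a black box for the composition $A\circ\psi|_{X_+}$ with $K=[x]$. The paper's route avoids repeating the covering/chaining argument at the cost of the (already established) conjugation lemma; yours is more self-contained but duplicates that argument. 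Both are sound, and your checks on the delicate points --- that the short geodesic between two points of $W$ stays in the spherically convex cap (so the connecting path stays in $\psi\Inv(B(w,\eps))$ and hence in the component of $x$), and that concatenating a short geodesic with a constant path is again a short geodesic --- are exactly the ones needed.

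The one place where you are too quick is the ``secondary'' bookkeeping point: identifying your component of $x$ in $(U_1\cup\ldots\cup U_N)\cap\psi\Inv(B(w,\eps))$ with $U_{[x],\eps}$ of Notation~\ref{notation:component}, which is the component of $x$ in the \emph{full} preimage $\psi\Inv(B(w,\eps))$. This is not immediate, since a priori the larger component could leak out of $U_1\cup\ldots\cup U_N$ through points of $X$ far from $[x]$ whose images happen to lie in $B(w,\eps)$. Ruling this out is precisely the second half of the paper's proof: $\psi$ is proper (as $X$ is compact), so $\psi\Inv(w)$ is compact, and local spherical convexity makes it locally connected, hence it has finitely many components; separating these by disjoint open sets and using properness to squeeze $\psi\Inv(B(w,\eps))$ into their union forces the component of $x$ into the open set containing $[x]$. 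This is where compactness of $X$ genuinely enters, so it deserves to be written out rather than labelled routine; once it is, your proof is complete.
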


\begin{proof}
Fix $x \in X$.  Without loss of generality, assume that
$w := \psi(x)$ is the north pole. 
Let $S_+^{n-1}$ denote the open upper hemisphere, 
let $A \co S_+^{n-1} \to \R^{n-1}$ 
be the homeomorphism of Lemma \ref{radial projection},
and let $X_+ = \psi\Inv(S_+^{n-1})$.
Using Lemma~\ref{radial projection},
and applying Lemma~\ref{good U} 
to the composition
$A \circ \psi|_{X_+} \co X_+ \to \R^{n-1}$
and the set $K = [x]$, we find a neighborhood $V$ of $[x]$
that is contained in $X_+$
and such that the map $\psi \co V \to \psi(V)$
is open and spherically convex.

We now show that there exists $\eps > 0$ such that the
neighborhood $V$ of $[x]$ contains the connected component $U_{[x],\eps}$
of $[x]$ in $\psi\Inv(B(w,\eps))$.

Because $\psi$ is proper, the level set $\psi\Inv(w)$ is compact.
Because every point in $X$ has a neighborhood on which $\psi$
(is spherically convex, hence) has connected level sets,
the level set $\psi\Inv(w)$
is locally connected.  These two properties imply that the level set
$\psi\Inv(w)$ has only finitely many connected components.
Because these components are compact and disjoint and $X$ is Hausdorff,
there exist disjoint open subsets $\calO_1,\ldots,\calO_k$ of $X$
such that every $\calO_j$ contains exactly one component of $\psi\Inv(w)$.
Without loss of generality, suppose that the component $[x]$ is contained
in the set $\calO_1$. Let $\calO_1' = \calO_1 \cap V$.
Because $\psi$ is proper and $\calO_1' \cup \calO_2 \cup \ldots \calO_k$
is a neighborhood of the level set $\psi\Inv(w)$, there exists
$\eps > 0$ such that $\psi\Inv(B(w,\eps))$ is contained
in $\calO_1' \cup \calO_2 \cup \ldots \calO_k$.
Because the sets $\calO_1',\calO_2,\ldots,\calO_k$
are open and disjoint and the set $U_{[x],\eps}$ is connected
and meets ($[x]$, hence) $\calO_1'$, the set $U_{[x],\eps}$
is entirely contained in $\calO_1'$.
In particular, $U_{[x],\eps}$ is contained in $V$.
Choosing $\eps$ sufficiently small, we may assume that $B(w,\eps) \cap S^{n-1}$
is contained in an open hemisphere.

Because $\psi|_V$ is spherically convex, by Remark~\ref{restrict},
$V \cap \psi\Inv (B(w,\eps))$ is connected.
Since $V \cap \psi\Inv (B(w,\eps))$ is a connected subset
of $\psi\Inv (B(w,\eps))$ that contains the connected component
$U_{[x],\eps}$, it must be equal to this component:
$V \cap \psi\Inv (B(w,\eps)) = U_{[x],\eps}$.
By this and Remark~\ref{restrict}, since $\psi|_V$ is spherically convex
and open to its image, so is $\psi|_{U_{[x],\eps}}$.
\end{proof}

\begin{Remark} \labell{rk metric}
A similar result holds with more general target spaces;
one can avoid Lemma~\ref{radial projection}
and work with an analogue of Lemma~\ref{good U}
for more general target spaces.
See \cite{BOR2}.  
We will not need this level of generality.
\end{Remark}

We let $l(\gamma) \in [0,\infty]$ denote the length of a continuous curve
$\gamma \co [0,1] \to \R^n$.

We now have the following variant
of Theorem~15 of~\cite{karshon-marshall}.

\begin{Proposition} \labell{exists monotone geodesic}
Let $X$ be a compact connected Hausdorff topological space and let
$$\psi \co X \to S^{n-1}$$
be a continuous map.  Suppose that every point of $X$ is contained
in an open set $U \subset X$ such that the map $\psi|_U \co
U \to \psi(U)$ is open and is spherically convex. Then every
two points in $X$ can be connected by a path $\gamma$ such that
$\psi \circ \gamma$ is a weakly monotone geodesic
and such that $l(\psi \circ \gamma) \leq l(\psi \circ \gamma')$
for every other path $\gamma'$ connecting the two points.
\end{Proposition}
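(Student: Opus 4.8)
The plan is to imitate the proof of Theorem~15 of~\cite{karshon-marshall}, using the local-to-global machinery already set up. Fix two points $x_0$ and $x_1$ in $X$. The strategy is to construct, among all paths $\gamma$ connecting $x_0$ to $x_1$ whose composition $\psi \circ \gamma$ is a weakly monotone geodesic, one of minimal length, by a compactness-and-continuity argument on the space of such paths. First I would establish the \emph{existence} of at least one such path. Using Lemma~\ref{spherical intgoalB}, every point $x$ has, for small $\eps$, a neighborhood $U_{[x],\eps}$ on which $\psi$ is open and spherically convex; cover $X$ by finitely many such neighborhoods. Since $X$ is connected and compact, $x_0$ and $x_1$ can be joined by a path passing through finitely many of these neighborhoods. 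On each neighborhood, spherical convexity lets us connect consecutive points by a path whose $\psi$-image is a weakly monotone short geodesic (after possibly shrinking to avoid antipodal images — this is where one must be slightly careful, but it can always be arranged by subdividing so that consecutive sample points are close). Concatenating these gives \emph{some} path $\gamma_0$ with $\psi \circ \gamma_0$ a piecewise weakly monotone geodesic; its length $l(\psi \circ \gamma_0)$ is finite, say equal to $L_0$.

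Next I would run the length-minimization. Let $L = \inf l(\psi \circ \gamma)$ over all paths $\gamma$ from $x_0$ to $x_1$ with $\psi \circ \gamma$ a weakly monotone geodesic (interpreting "weakly monotone geodesic" to include the concatenated/piecewise case, or better, passing to the infimum over the larger class of all connecting paths and arguing the infimum is realized by a genuine weakly monotone geodesic). Take a minimizing sequence $\gamma_j$ with $l(\psi \circ \gamma_j) \to L$. The images $\psi \circ \gamma_j$ are geodesic arcs of bounded length in $S^{n-1}$; by compactness of $S^{n-1}$ and Arzelà–Ascoli (the geodesic arcs are uniformly Lipschitz after arclength reparametrization) a subsequence converges uniformly to a weakly monotone geodesic $\sigma$ in $S^{n-1}$ from $\psi(x_0)$ to $\psi(x_1)$ with $l(\sigma) = L$. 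The remaining — and I expect \emph{main} — obstacle is to lift $\sigma$ back to a path $\gamma$ in $X$ from $x_0$ to $x_1$ with $\psi \circ \gamma = \sigma$ (up to reparametrization). This lifting is exactly where the local openness and local spherical convexity of $\psi$ get used: one covers the compact arc $\sigma([0,1])$, pulls back to a chain of the neighborhoods $U_{[x],\eps}$ along a putative lift, and uses the openness of $\psi|_{U_{[x],\eps}}$ onto its image together with connectedness of level sets to extend the lift step by step; a maximality/continuity argument (the set of $t$ for which a lift over $[0,t]$ exists is open, closed, and nonempty) then produces a lift over all of $[0,1]$. Spherical convexity of the local pieces guarantees that within each neighborhood the lift can be taken so that $\psi \circ \gamma$ remains weakly monotone along $\sigma$.

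Finally I would verify minimality and weak monotonicity of the resulting path: by construction $\psi \circ \gamma = \sigma$ has length $L$, which is the infimum, so $l(\psi \circ \gamma) \le l(\psi \circ \gamma')$ for every competitor $\gamma'$; and $\sigma$, being a geodesic arc of length $L < \infty$ realized as a limit of weakly monotone geodesics, is itself a weakly monotone geodesic (a uniform limit of weakly monotone geodesics parametrizing arcs of a fixed great circle, of bounded length, is again weakly monotone — here the length bound prevents wrap-around, keeping everything inside a half-sphere so the weak monotonicity condition on angles passes to the limit). The one delicate point throughout is handling the case $\psi(x_1) = -\psi(x_0)$: then no short geodesic connects them and one must argue that the infimum of lengths is still attained (the minimizing arc has length exactly $\pi$, going through an intermediate point), or reduce to the non-antipodal case by first moving $x_1$ slightly within its level set; I would treat this as a separate short case at the end, exactly as in~\cite{karshon-marshall}.
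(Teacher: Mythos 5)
Your overall strategy (direct minimization of length via a compactness argument) is in the right family, but the specific scheme has a genuine gap at its center: you never actually establish that the length\textendash minimizing connection is a \emph{single weakly monotone geodesic}, and your two candidate ways of setting up the minimization each fail at exactly this point. If you take the infimum over \emph{all} connecting paths, then the minimizing sequence $\psi\circ\gamma_j$ consists of arbitrary curves in $\psi(X)$, not geodesic arcs; Arzel\`a--Ascoli gives a limit curve $\sigma$ of length $L=d_\psi(x_0,x_1)$, but there is no reason for $\sigma$ to be an arc of a great circle --- a curve that minimizes the \emph{constrained} length (over curves liftable to $X$) need not minimize length in $S^{n-1}$, so "converges to a weakly monotone geodesic" is precisely the assertion to be proved, not a consequence of compactness. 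If instead you restrict the infimum to paths whose composition is already a weakly monotone geodesic, you have not shown this class is nonempty (your first paragraph only produces a \emph{piecewise} geodesic), and even if a minimizer in that class exists you would still owe the inequality $l(\psi\circ\gamma)\le l(\psi\circ\gamma')$ against \emph{all} competitors $\gamma'$, which again is the heart of the matter. The paper closes this gap with a midpoint construction \`a la Condevaux--Dazord--Molino \cite{CDM}: one produces dyadic points $x_{j/2^m}$ satisfying the exact additivity $d_\psi(x_{(j-1)/2^m},x_{j/2^m})+d_\psi(x_{j/2^m},x_{(j+1)/2^m})=d_\psi(x_{(j-1)/2^m},x_{(j+1)/2^m})$, connects consecutive ones by short geodesics (possible uniformly, by Lemma~\ref{spherical intgoalB} and compactness), and the additivity forces the concatenation to be a single weakly monotone geodesic of length equal to the infimum. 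No limit of curves, and no lifting, is ever taken.

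A secondary but real problem is the lifting step itself. Your "open, closed, nonempty" argument for lifting $\sigma$ is not routine here: in the closedness step, the partial lifts over $[0,t_j]$ for $t_j\nearrow t^*$ need not be restrictions of one another, so their endpoints may cluster at several points of the level set $\psi^{-1}(\sigma(t^*))$ and you do not get a continuous lift over $[0,t^*]$ for free; $\psi$ is not a covering map and has no unique path lifting. (The paper deliberately avoids this by taking limits of \emph{points} --- cluster points of midpoints of a minimizing sequence, using compactness of $X$ --- rather than limits of curves.) Finally, the antipodal case $\psi(x_1)=-\psi(x_0)$ is not an afterthought in your setup: the short\textendash geodesic uniqueness you implicitly use to match the locally constructed arcs with arcs of $\sigma$ breaks down there, whereas the midpoint argument handles it with no change since consecutive dyadic points are always close.
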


The following proof of Proposition \ref{exists monotone geodesic}
is analogous to those of Theorem~15 and Lemma~23 of~\cite{karshon-marshall},
which follow Condevaux--Dazord--Molino \cite{CDM}.

\begin{proof}[Proof of Proposition \ref{exists monotone geodesic}.]
For two points $x$ and $x'$ in $X$, define
$d_\psi (x,x')$ to be the infimum of the lengths
$l(\psi \circ \gamma)$ as $\gamma$ varies over
all paths in $X$ from $x$ to $x'$.

Note that if $x' \in U_{[x],\eps/2}$ and $d_\psi (x',x'') \leq \eps/2$,
then $x'' \in U_{[x],\eps}$ (cf.\ Notation~\ref{notation:component}).

Let $x_0$ and $x_1$ be points of $X$.  We show that
there exists a ``midpoint" $x_{1/2}$ such that
$$ d_\psi(x_0,x_\half) = d_\psi(x_\half,x_1)
    = \half d_\psi(x_0,x_1).$$
Choose paths $\gamma_n$
connecting $x_0$ to $x_1$ such that the sequence of lengths
$l(\psi \circ \gamma_k)$ converges to $d_\psi(x_0,x_1)$. Let
$t_k \in [0,1]$ be such that
$l(\psi \circ \gamma_k|_{[0,t_k]}) =
 l(\psi \circ \gamma_k|_{[t_k,1]})
 = \frac{1}{2} l(\psi \circ \gamma_k)$. Since $X$ is compact,
there exists a point $x_{1/2}$ such that every neighborhood of $x_{1/2}$
contains $\gamma_k(t_k)$ for infinitely many values of $k$.
Let $\eps > 0$. There exists $j$
such that $\gamma_j(t_j)$ belongs to a
neighborhood $U$ of $x_{1/2}$ on which $\psi$ is spherically convex,
such that $d_\psi(\gamma_j(t_j),x_{1/2}) < \eps/2$, and such that
$l(\psi \circ \gamma_j) < d_{\psi}(x_0,x_1) + \eps$.
The path $\gamma_j|_{[0,t_j]}$, followed by a path in $U$ from
$\gamma_j(t_j)$ to $x_{1/2}$ whose composition with $\psi$ is a
weakly monotone geodesic,
form a path from $x_0$ to $x_{1/2}$
whose composition with $\psi$ has length $<
\frac{1}{2}d_\psi(x_0,x_1) + \eps$. This implies that
$d_{\psi}(x_0, x_{1/2})\leq \frac{1}{2}d_\psi(x_0,x_1)$.
Similarly, $d_{\psi}(x_{1/2},x_1)\leq
\frac{1}{2}d_\psi(x_0,x_1)$.

Iterating, we find points $x_{j/{2^m}}$,
for $0 \leq j \leq 2^m$,
such that
\begin{equation} \labell{equal}
 d_\psi \left( x_{\frac{j_1}{2^m}} , x_{\frac{j_2}{2^m}} \right)
 = \frac{ |j_1 - j_2| }{ 2^m }
\quad \text{ for all } j_1, j_2 \in \{ 0, 1, \ldots, 2^m \}.
\end{equation}

By Lemma~\ref{spherical intgoalB},
and since $X$ is compact,
there exist points $\xbar_1,\ldots,\xbar_N$
and positive numbers $\eps_1,\ldots,\eps_N$
such that $\psi|_{U_{[\xbar_i],\eps_i}}$
is open to its image and is spherically convex
and such that the sets $U_{[\xbar_i],\eps_i/2}$ cover $X$.
Let $\eps = \min \{ \eps_1, \ldots, \eps_N \}$.
Then for every $x'$ and $x''$, if $d_\psi(x',x'') \leq \eps/2$,
then there exists a path from $x'$ to $x''$
whose composition with $\psi$ is a weakly monotone short geodesic.

Choose $m$ large enough so that
$\frac{1}{2^m} d_\psi(x_0,x_1) < \frac{1}{4} \eps$.
Then each pair among the three points $x_{(j-1)/2^m}, x_{j/2^m}, x_{(j+1)/2^m}$ 
can be connected by a path whose composition with $\psi$
is a weakly monotone short geodesic.
Because
$d_\psi(x_{(j-1)/2^m},x_{j/2^m})+d_\psi(x_{j/2^m},x_{(j+1)/2^m})=d_\psi(x_{(j-1)/2^m},x_{(j+1)/2^m})$,
these paths fit into a path from $x_0$ to $x_1$ whose composition
with $\psi$ is a weakly monotone geodesic.
Because the length of this geodesic is $d_\psi(x_0,x_1)$,
it is shorter than $\psi \circ \gamma'$ for any other path
$\gamma'$ connecting $x_0$ to $x_1$.
\end{proof}

We have this easy consequence
of Proposition~\ref{exists monotone geodesic}:

\begin{Corollary}
Under the assumptions of Proposition~\ref{exists monotone geodesic},
let $x_0$ and $x_1$ be points of $X$, and suppose that there exists
a path $\tgamma$ in $X$ from $x_0$ to $x_1$
such that $l(\psi \circ \tgamma) < \pi$.
Then there exists a path $\gamma$ in $X$ from $x_0$ to $x_1$
such that $\psi \circ \gamma$ is a short geodesic.
\end{Corollary}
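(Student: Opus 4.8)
The plan is to deduce this immediately from Proposition~\ref{exists monotone geodesic} by a length comparison. First I would observe that the hypotheses of the Corollary are exactly the hypotheses of Proposition~\ref{exists monotone geodesic} ($X$ compact connected Hausdorff, and every point contained in an open set on which $\psi$ is open to its image and spherically convex). Applying that proposition to the two given points $x_0$ and $x_1$ produces a path $\gamma$ in $X$ from $x_0$ to $x_1$ such that $\psi \circ \gamma$ is a weakly monotone geodesic and such that $l(\psi \circ \gamma) \le l(\psi \circ \gamma')$ for \emph{every} path $\gamma'$ in $X$ from $x_0$ to $x_1$.

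Next I would feed the hypothesized path $\tgamma$ into this minimality property: taking $\gamma' = \tgamma$ gives $l(\psi \circ \gamma) \le l(\psi \circ \tgamma) < \pi$. Hence $\psi \circ \gamma$ is a weakly monotone geodesic of length $< \pi$, which is precisely the definition of a weakly monotone short geodesic from Section~\ref{sec:spherical geometry}. This $\gamma$ is the path asserted by the Corollary.

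I do not expect any genuine obstacle here; this is exactly why the statement is flagged as an easy consequence. The only thing to keep straight is terminology: "short geodesic" in the conclusion is to be read in the sense of Section~\ref{sec:spherical geometry}, so that the weakly monotone reparametrization already supplied by Proposition~\ref{exists monotone geodesic} is allowed and no extra work is needed. (If one wished, one could further reparametrize $\gamma$ so that $\psi\circ\gamma$ has constant speed, which alters neither its length nor its status as a geodesic arc, but this is not required.)
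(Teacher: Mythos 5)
Your argument is correct and is exactly the intended one: the paper states this as an easy consequence of Proposition~\ref{exists monotone geodesic} without writing out a proof, and the length comparison $l(\psi\circ\gamma)\le l(\psi\circ\tgamma)<\pi$ applied to the minimizing weakly monotone geodesic supplied by that proposition is the whole content. Nothing further is needed.
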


\begin{Remark}
Let $\psi \co X \to S^{n-1}$ be as in
Proposition~\ref{exists monotone geodesic}.
As in Condevaux--Dazord--Molino \cite{CDM}, consider the equivalence
relation on $X$ in which $x_0 \sim x_1$ 
iff there exists a path $\gamma \co [0,1] \to X$
such that $\gamma(0) = x_0$ and $\gamma(1) = x_1$
and such that $\psi \circ \gamma$ is constant.
The function $d_\psi$ defined in the proof 
of Proposition~\ref{exists monotone geodesic} descends
to a metric on $X/\!\!\sim$;
this follows from Lemma~\ref{spherical intgoalB}.
With this metric, $X/\!\!\sim$ is a \emph{length space}.
(This means 
that the distance between two points is equal to the infimum
of the lengths of curves that connect them. As usual, the length of 
a continuous curve $\{\gamma(t)\}_{t \in [0,1]}$ is the supremum 
of $\sum_{i=1}^n \text{distance}(\gamma(t_{i}),\gamma(t_{i-1}))$
over all partitions $0=t_0<t_1<\ldots<t_n=1$.)
Proposition~\ref{exists monotone geodesic}
then follows from the Hopf--Rinow theorem for length spaces.
See~\cite[Chapter~I.3]{bridson-haefliger}.
\end{Remark}

% =====================================================
\section{Shortening a nonminimal monotone geodesic}
% =====================================================
\labell{sec:shorten}

\begin{Proposition} \labell{shorten}
Let $X$ be a compact connected Hausdorff space.
Let $\psi \co X \to S^{n-1}$ be a continuous map.
Suppose that each point in $X$ is contained in an open set $U \subset X$
such that $\psi|_U \co U \to \psi(U)$
is spherically convex and is open.

Let $\gamma \co [0,1] \to X$ be a path in $X$
such that $\psi \circ \gamma$ is a weakly monotone geodesic
of length $ \pi < l(\psi \circ \gamma) < 2\pi $.
Let $E \subset S^{n-1}$ be the great circle that contains
this geodesic; suppose that the image $\psi(X)$
is not entirely contained in $E$.
Then there exists a path $\tgamma \co [0,1] \to X$,
with the same endpoints as $\gamma$, such that
$l(\psi \circ \tgamma) < l(\psi \circ \gamma)$.
\end{Proposition}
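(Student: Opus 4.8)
The plan is to use the extra freedom coming from the fact that $\psi(X)$ is not contained in the great circle $E$: pick a point $z \in X$ with $w := \psi(z) \notin E$, and use $z$ as a ``detour point'' to shortcut $\gamma$. Write $\psi\circ\gamma$ as a weakly monotone geodesic running over an arc of $E$ of length $L \in (\pi, 2\pi)$, with endpoints $p_0 = \psi(\gamma(0))$ and $p_1 = \psi(\gamma(1))$. Since $L > \pi$, neither subarc from $p_0$ to $p_1$ along $E$ is short; equivalently, $p_0$ and $p_1$ are at spherical distance $2\pi - L < \pi$ but the geodesic $\psi\circ\gamma$ goes ``the long way around''. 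The key geometric observation is that the point $w$, being off $E$, together with $p_0$ and $p_1$ spans (generically) a $3$-dimensional subspace, and the two short geodesics $[p_0, w]$ and $[w, p_1]$ have total length strictly less than $L$. Indeed, each of $p_0, p_1$ lies within spherical distance $\pi$ of $w$ (we may shrink: replace $w$ by a point on the short geodesic from a point of $E$ to $z$'s image that is close to $E$, so that $w$ is close to $E$ and hence close to both $p_0$ and $p_1$); then $d(p_0,w) + d(w,p_1)$ can be made as close as we like to $d(p_0,p_1)$ measured the short way, which is $2\pi - L < \pi < L$. This is where Lemma~\ref{triangle} (or a direct triangle-inequality estimate on $S^2$) enters: in the spherical triangle with vertices $p_0, p_1, w$, the sides $d(p_0,w)$ and $d(w,p_1)$ are short, so their sum bounds $d(p_0,p_1)_{\text{short}} = 2\pi - L$ from above, and by choosing $w$ near $E$ this sum is $< L$.

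With the geometry in hand, the next step is to realize the two short geodesic arcs by actual paths in $X$. Apply Proposition~\ref{exists monotone geodesic} to the pairs $(\gamma(0), z)$ and $(z, \gamma(1))$: each pair can be connected by a path in $X$ whose $\psi$-image is a weakly monotone geodesic of minimal length. By the preceding paragraph (and by choosing $z$ appropriately — more precisely, choosing a point $z' \in X$ whose image $\psi(z')$ lies near $E$ and off it, which exists because $\psi(X) \not\subset E$ and $\psi(X)$ is connected hence $\psi(X)$ meets every neighborhood of $E$ on the side of $w$), the sum of the lengths of these two minimal geodesics is strictly less than $l(\psi\circ\gamma) = L$. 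Concatenating the two paths gives a path $\tgamma$ from $\gamma(0)$ to $\gamma(1)$ with $l(\psi\circ\tgamma) < l(\psi\circ\gamma)$, as required. (Note $\tgamma$'s $\psi$-image need not itself be a single monotone geodesic — that is fine; the statement only asks for a strictly shorter path.)

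The main obstacle is producing a detour point $z$ for which the estimate $d(\psi(\gamma(0)), \psi(z)) + d(\psi(z), \psi(\gamma(1))) < L$ actually holds, rather than merely $\le$ something close to $L$; one must exploit that $L > \pi$ with room to spare. The clean way is: let $q$ be a point of $E$ not on the closed arc traced by $\psi\circ\gamma$ — such $q$ exists since that arc has length $L < 2\pi$ — and note $d(p_0, q) + d(q, p_1) = 2\pi - L < \pi$, with both summands short. Now perturb $q$ off $E$ to a nearby point $w = \psi(z)$ with $z \in X$, using density of $\psi(X)$ near $E$: by continuity of spherical distance, $d(p_0,w) + d(w,p_1)$ stays $< \pi < L$, and both summands stay $< \pi$ so that Proposition~\ref{exists monotone geodesic} yields genuine \emph{short} geodesic paths (via the Corollary following that Proposition). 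The only subtlety is checking that $\psi(X)$ really does come arbitrarily close to \emph{this particular} arc-complement point $q$ pushed slightly off $E$; this follows because $\psi(X)$ is connected, meets $E$ (it contains $\psi\circ\gamma$), and is not contained in $E$, so its closure meets both sides of $E$ near some point, and a connectedness/continuity argument places points of $\psi(X)$ near any chosen side near $q$ — here one may need to first move $q$ along $E$ to lie near where $\psi(X)$ leaves $E$. I would organize the write-up to handle this location-of-detour-point issue first, then the triangle estimate, then the concatenation.
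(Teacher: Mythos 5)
There is a genuine gap, and it is the central one: you conflate the spherical distance between image points in $S^{n-1}$ with the induced pseudo--distance $d_\psi$ on $X$ (the infimum of $l(\psi \circ \gamma')$ over paths $\gamma'$ in $X$). Proposition~\ref{exists monotone geodesic} produces a path from $\gamma(0)$ to $z$ whose $\psi$--image is a monotone geodesic of length $d_\psi(\gamma(0),z)$, which is bounded \emph{below} by the spherical distance $d_{S^{n-1}}(p_0,w)$ but can be much larger (the points may only be joinable in $X$ by paths whose images wind the long way around, as in Example~\ref{dim2}). So your estimate $d_{S^{n-1}}(p_0,w)+d_{S^{n-1}}(w,p_1)<\pi<L$ does not imply that the concatenated path has $\psi$--length less than $L$; it gives no upper bound on $d_\psi(\gamma(0),z)+d_\psi(z,\gamma(1))$ at all. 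A secondary gap is the location of the detour point: $\psi(X)\not\subset E$ only gives \emph{some} point off $E$, and neither connectedness of $X$ nor of $\psi(X)$ forces $\psi(X)$ to approach your chosen arc--complement point $q$ from off $E$; your fallback of moving $q$ to ``where $\psi(X)$ leaves $E$'' can destroy the estimate (e.g.\ if that happens near the midpoint of the arc, the two short spherical distances sum to $L$, not to $2\pi-L$).

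The paper's proof is built precisely to avoid both problems. It takes the midpoint $x_{1/2}$ of $\gamma$ (so that $d_\psi(x_0,x_{1/2})=d_\psi(x_{1/2},x_1)=L/2>\pi/2$), uses Proposition~\ref{exists monotone geodesic} to flow from $x_{1/2}$ itself along a monotone geodesic out of $\psi\Inv(E)$, and chooses the detour point $\xhat_\eps$ on that path with $d_\psi(x_{1/2},\xhat_\eps)=\eps$ small --- so the detour point is \emph{intrinsically} close to a point of the original path, not merely close in image. The strict shortening then comes from Lemma~\ref{triangle}: in the right spherical triangle with one leg of length $>\pi/2$ the hypotenuse is strictly shorter than that leg, applied with legs realized by actual paths in $X$. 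If you want to salvage your argument you would need to produce a detour point with a controlled $d_\psi$--distance (not just image distance) to points of $\gamma$, which essentially forces you back to the midpoint--flow construction.
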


\begin{proof}
Let $t_{\half}$ be such that
$l(\psi \circ \gamma|_{[0,t_\half]})
 = l(\psi \circ \gamma|_{[t_\half,1]})
 = \half l(\psi \circ \gamma).$
Let $x_0 = \gamma(0)$, $x_\half = \gamma(t_\half)$, $x_1 = \gamma(1)$.
Because $\psi \circ \gamma|_{[0,t_\half]}$
and $\psi \circ \gamma|_{[t_\half,1]}$ are short geodesics,
$d_\psi(x_0,x_\half) = d_\psi(x_\half,x_1)
 = \half l(\psi \circ \gamma)$.

By Proposition \ref{exists monotone geodesic} 
and because the image $\Psi(X)$ is not contained in $E$,
we can connect $x_\half$
to a point outside $\psi\Inv(E)$ by a path $\gammahat$
whose composition with $\psi$ is a weakly monotone geodesic.

For each $\eps > 0$ sufficiently small, let $\xhat_\eps$ be a point on the path $\gammahat$
such that $d_\psi(x_\half,\xhat_\eps) = \eps$, and let $x_{t_\eps}$
be a point on the path $\gamma$ such that $\psi(x_{t_\eps})$
is closest to $\psi(\xhat_\eps)$ among all points on the great
circle $E$.  Then there is a path in $X$ from $x_{t_\eps}$
to $\xhat_\eps$ whose composition with $\psi$ is contained
in a great circle that is perpendicular to $\gamma$.

We have $\lim\limits_{\eps \to 0} d_\psi(x_{t_\half},x_{t_\eps})
 = \lim\limits_{\eps \to 0} d_\psi(x_{t_\half},\xhat_\eps) = 0$.

Fix $\eps > 0$ sufficiently small so that
$$ d_\psi(x_0,x_{t_\eps}) > \frac{\pi}{2}, \quad
   d_\psi(x_1,x_{t_\eps}) > \frac{\pi}{2}, \quad
\text{and} \quad
   d_\psi(x_1,x_{t_\eps}) + d_\psi(x_{t_\eps}, \xhat_\eps) < \pi .$$

By Lemma \ref{triangle},
$d_\psi(x_0,\xhat_\eps) < d_\psi(x_0,x_{t_\eps})$
and
$d_\psi(x_1,\xhat_\eps) < d_\psi(x_1,x_{t_\eps})$.
Thus, there exists a path from $x_0$ to $\xhat_\eps$
whose composition with $\psi$ has length $< d_\psi(x_0,x_{t_\eps})$,
and there exists a path from $\xhat_\eps$ to $x_1$ whose composition
with $\psi$ has length $< d_\psi(x_{t_\eps},x_1)$.
The concatenation of these paths is a path from $x_0$ to $x_1$
whose composition with $\psi$ has length
$ < d_\psi(x_0,x_{t_\eps}) + d_\psi(x_{t_\eps},x_1)
  = l(\psi\circ \gamma)$.
\end{proof}

\begin{Corollary} \labell{shortening}
Let $X$ be a compact connected Hausdorff topological space. 
Let $\psi \co X \to S^{n-1}$ be a continuous map. 
Suppose that each point in $X$ has an open neighborhood $U$ 
such that $\psi|_U$ is spherically convex and is open 
as a map to its image, $\psi(U)$. 
Suppose that $\psi(X)$ is not contained in a great circle. 
Then the map $\psi \co X \to \psi(X) \subset S^{n-1}$ 
is spherically convex and is open as a map to its image.
\end{Corollary}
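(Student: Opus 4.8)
The plan is to play Proposition~\ref{exists monotone geodesic} and Proposition~\ref{shorten} against each other, deriving spherical convexity of $\psi$ and then reading off openness from Lemma~\ref{spherical intgoalB}. Fix $x_0,x_1\in X$ with $\psi(x_1)\neq-\psi(x_0)$. By Proposition~\ref{exists monotone geodesic} there is a path $\gamma\co[0,1]\to X$ from $x_0$ to $x_1$ whose composition with $\psi$ is a weakly monotone geodesic whose length $\ell$ is minimal among all paths, so $\ell=d_\psi(x_0,x_1)$ in the notation of that proposition's proof. If $\ell<\pi$ (in particular if $\ell=0$), then $\psi\circ\gamma$ is already a weakly monotone short geodesic and $x_0,x_1$ are connected as required; and $\ell=\pi$ is impossible, since a weakly monotone geodesic of length exactly $\pi$ has antipodal endpoints, contrary to $\psi(x_1)\neq-\psi(x_0)$. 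So the whole point is to rule out $\ell>\pi$, and this is where the hypothesis that $\psi(X)$ lies in no great circle is used.

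Suppose $\ell>\pi$. Put $L=\min(\ell,3\pi/2)$, so that $\pi<L<2\pi$, and choose $t^{*}\in[0,1]$ with $l(\psi\circ\gamma|_{[t^{*},1]})=L$; this is possible because $t\mapsto l(\psi\circ\gamma|_{[t,1]})$ decreases continuously from $\ell$ to $0$. The restriction $\psi\circ\gamma|_{[t^{*},1]}$ is again a weakly monotone geodesic, now of length $L\in(\pi,2\pi)$, and it lies in the great circle $E$ that carries $\psi\circ\gamma$; moreover $\psi(X)\not\subset E$ since $\psi(X)$ is contained in no great circle. Proposition~\ref{shorten} then produces a path $\sigma$ from $\gamma(t^{*})$ to $x_1$ with $l(\psi\circ\sigma)<L$. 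Concatenating $\gamma|_{[0,t^{*}]}$, whose $\psi$--image has length $\ell-L\ge0$, with $\sigma$ yields a path from $x_0$ to $x_1$ whose composition with $\psi$ has length $<(\ell-L)+L=\ell=d_\psi(x_0,x_1)$, contradicting minimality. Hence $\ell<\pi$, and $\psi$ is spherically convex. The one subtlety to flag is that Proposition~\ref{shorten} applies only to geodesics of length strictly between $\pi$ and $2\pi$: when $\ell\ge 2\pi$ the geodesic $\psi\circ\gamma$ is itself too long, so one really must pass to the truncation $\gamma|_{[t^{*},1]}$ before invoking Proposition~\ref{shorten}.

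For openness I would use that spherical convexity, now established, makes every level set $\psi\Inv(w)$ connected. Fix $x'\in X$ and set $w'=\psi(x')$. By Lemma~\ref{spherical intgoalB} and its proof, for all sufficiently small $\eps>0$ the set $U_{[x'],\eps}$ is open, coincides with $\psi\Inv(B(w',\eps))$ (here one uses connectedness of $\psi\Inv(w')=[x']$ together with properness of $\psi$), and $\psi$ restricted to it is open onto its image; hence $\psi|_{\psi\Inv(B(w',\eps))}$ is open onto $\psi(X)\cap B(w',\eps)$. To pass from this to the statement that an \emph{arbitrary} open $V\ni x'$ has $\psi(V)$ a neighborhood of $w'$ in $\psi(X)$, I would argue by contradiction: given $w_k\in\psi(X)\setminus\psi(V)$ with $w_k\to w'$, choose $z_k\in\psi\Inv(w_k)$, pass to a convergent subsequence $z_k\to z$ by compactness, note $\psi(z)=w'$, join $z$ to $x'$ inside the level set $\psi\Inv(w')$ by a path whose composition with $\psi$ is constant (spherical convexity, the length--zero case), cover that path by finitely many local--openness neighborhoods arranged in a chain $W_0\ni z,\ \dots,\ W_m$ with $x'\in W_m\subset V$ and with each overlap $W_j\cap W_{j+1}$ meeting the level set, and then show that $\psi(W_j)\cap B(w',\delta)$ is independent of $j$ for a common small $\delta$, using openness of each $\psi|_{W_j}$ on these overlaps. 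For large $k$ this gives $w_k\in\psi(W_0)\cap B(w',\delta)=\psi(W_m)\cap B(w',\delta)\subset\psi(V)$, a contradiction. The real obstacle of the whole proof sits here: local openness to the image does \emph{not} by itself give global openness to the image (the map $[0,1]\sqcup[0,2]\to[0,2]$ shows this), so the connectedness of the level sets supplied by spherical convexity must genuinely be fed in, transporting openness along a path that lies inside a level set. If the needed chain argument is available in the form required from~\cite{karshon-marshall}, I would simply cite it; otherwise I would record it as a short lemma and use it here.
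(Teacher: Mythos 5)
Your argument for spherical convexity is the paper's argument: minimize length via Proposition~\ref{exists monotone geodesic}, then use Proposition~\ref{shorten} to rule out a minimizer of length $>\pi$, and use $\psi(x_1)\neq-\psi(x_0)$ to exclude length exactly $\pi$. Your truncation to a subpath of $\psi$--length $L=\min(\ell,3\pi/2)\in(\pi,2\pi)$ is in fact \emph{more} careful than the paper, which invokes Proposition~\ref{shorten} without addressing that its hypothesis $l<2\pi$ may fail for the minimizer itself; your fix is correct and closes that small gap cleanly.

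For openness you have the right ingredients but then take an unnecessary detour. The paper's point is that, once $\psi$ is spherically convex, for small $\eps$ the set $U_{[x'],\eps}$ is not just \emph{some} neighborhood of $x'$ but the \emph{full preimage} $\psi\Inv(B(w',\eps))$, and Lemma~\ref{spherical intgoalB} makes $\psi$ restricted to it open onto $\psi(X)\cap B(w',\eps)$. Openness to the image of a restriction to a saturated open set (a preimage) does propagate directly: for any open $V\ni x'$, the set $V\cap\psi\Inv(B(w',\eps))$ is open in $\psi\Inv(B(w',\eps))$, so its image is open in $\psi(X)\cap B(w',\eps)$, hence in $\psi(X)$, and it contains $w'$ and sits inside $\psi(V)$. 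Your counterexample $[0,1]\sqcup[0,2]\to[0,2]$ shows the failure only for restrictions to non-saturated open sets, so the obstacle you flag is not present here, and the chain-of-neighborhoods contradiction argument (essentially a re-derivation of part of Lemma~\ref{good U}) is superfluous. As written that detour also has a technical misstep: a compact Hausdorff space need not be sequentially compact, so you cannot extract a convergent subsequence of the $z_k$; you would need to work with cluster points or nets. None of this affects the correctness of the overall proof, since the direct argument is already available from what you established.
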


\begin{proof}
Let $x_0$ and $x_1$ be points of $X$
such that $\psi(x_1) \neq -\psi(x_0)$.
By Proposition \ref{exists monotone geodesic}, 
$x_0$ and $x_1$ can be connected by a weakly monotone geodesic 
that is shortest among all paths from $x_0$ to $x_1$.
Since $\psi(X)$ is not
contained in a great circle, by Proposition \ref{shorten}, 
this geodesic cannot have length $> \pi$.
So it has length $\leq \pi$.  
Because $\psi(x_1) \neq -\psi(x_0)$,
it has length $<\pi$. 
That is, $x_0$ and $x_1$ can be connected by a weakly monotone
short geodesic.
This proves that $\psi$ is spherically convex.

Fix $w \in S^{n-1}$.
Because $\psi$ is spherically convex,
if $\eps$ is sufficiently small, 
$\psi\Inv(B(w,\eps))$ is connected (cf.\ Remark~\ref{restrict}).
So $\psi\Inv(B(w,\eps))$ is equal to
$U_{[x],\eps}$ for any $x \in \psi\Inv(w)$
(cf.~Notation~\ref{notation:component}).
By Lemma~\ref{spherical intgoalB},
if $\eps$ is sufficiently small, it follows that
$\psi|_{\psi\Inv(B(w,\eps))}$ is open as a map to its image.
Since for every $w \in S^{n-1}$ there exists $\eps>0$ such that 
$\psi|_{\psi\Inv(B(w,\eps))}$ is open as a map to its image,
$\psi$ is open as a map to its image.
\end{proof}

We now give the ultimate ``local to global" result,
for maps to spheres:

\begin{Proposition} \labell{two possibilities}
Let $X$ be a compact connected Hausdorff topological space. 
Let $\psi \co X \to S^{n-1}$ be a continuous map. 
Suppose that every point in $X$ has an open neighborhood $U$ 
such that $\psi|_U$ is spherically convex 
and is open as a map to its image, $\psi(U)$. 

%\medskip

Suppose that $\psi(X)$ is not contained in a great circle. 
Then all the following results hold.
\begin{enumerate}
\item
For every $x_0$ and $x_1$ in $X$
there exists a path $\gamma$ from $x_0$ to $x_1$
such that $\psi \circ \gamma$ is a weakly monotone geodesic
of length $\leq \pi$.
\item
The set $C = \R_{\geq 0} \cdot \psi(X)$ is a convex cone 
with vertex at the origin:
if $w_1, w_2 \in C$ and $\lambda_1, \lambda_2 \in \R_{\geq 0}$,
then $\lambda_1 w_1 + \lambda_2 w_2 \in C$. 
\item
The level sets of $\psi$ are connected.
\item
The map $\psi$ is open as a map to its image.
\end{enumerate}

Suppose that the image $\psi(X)$ is contained in a great circle.
Let $\iota \co S^1 \to S^{n-1}$ be an isometric parametrization
of that great circle.
Then exactly one of the following three possibilities occurs.

\begin{enumerate}
	\renewcommand{\theenumi}{\roman{enumi}}
\item
There exist an interval $[\theta_0,\theta_1] \subset \R$,
and a surjective map
$ \tpsi \co X \to [\theta_0,\theta_1] $
which is convex and open, such that the map
$\psi \co X \to S^{n-1}$
is equal to the composition
$$ \begin{CD}
 X @> \tpsi >> [\theta_0,\theta_1]
  @> (\cos(\cdot),\sin(\cdot)) >> S^1 @> \iota >> S^{n-1}.
\end{CD} $$

\item
There exists a surjective map
$\tpsi \co X \to S^1$,
which is spherically convex and open, and a positive integer $m$,
such that the map $\psi$ is equal to the composition
$$ \begin{CD}
 X @> \tpsi >> S^1 @>
(\cos\theta,\sin\theta) \mapsto (\cos m\theta,\sin m\theta)
>> S^1 @> \iota >> S^{n-1}.
\end{CD} $$

\item
$\psi$ is constant.
\end{enumerate}
\end{Proposition}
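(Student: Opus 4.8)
Here is the plan.

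\medskip

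The plan is to treat the two regimes of the statement separately. When $\psi(X)$ is not contained in a great circle, items (3) and (4) are immediate from Corollary~\ref{shortening} (a spherically convex map is open to its image and has connected level sets); item (1) follows by combining Proposition~\ref{exists monotone geodesic} with Proposition~\ref{shorten}: a length-minimizing weakly monotone geodesic joining $x_0$ to $x_1$ cannot have length in $(\pi,2\pi)$ by Proposition~\ref{shorten}, and it cannot have length $\ge 2\pi$ either, for one could then excise a full revolution and contradict minimality, using that spherical convexity joins two points with a common image $w$ by a \emph{constant} geodesic (since $w\ne-w$); hence its length is $\le\pi$. For item (2), a cone $C$ with vertex at the origin is convex as soon as the segment between any two points of a generating set lies in $C$, so it suffices to note that for $w_1,w_2\in\psi(X)$ the segment $[w_1,w_2]$ is either the radial projection of the short arc joining them — which is the image of a path furnished by spherical convexity, hence lies in $\psi(X)\subseteq C$ — or, when $w_1=-w_2$, passes through the origin with the rest of it in $\R_{\geq 0}\cdot(\{w_1\}\cup\{w_2\})\subseteq C$.

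Now suppose $\psi(X)\subseteq E$; fix the isometric parametrization $\iota\co S^1\to S^{n-1}$ of $E$ and set $\ol\psi:=\iota\Inv\circ\psi\co X\to S^1$. If $\psi$ is constant we are in case (iii), so assume $\psi$ nonconstant. A preliminary observation is that $X$ is path-connected: each point has a path-connected neighborhood (a small $U_{[x],\eps}$ from Lemma~\ref{spherical intgoalB}, shrunk so its $\psi$-image lies in an open hemisphere, hence path-connected by spherical convexity), so the path-components of $X$ are open, hence closed, hence equal to $X$. Let $m\ge 0$ generate $\ol\psi_*(\pi_1(X))\subseteq\Z=\pi_1(S^1)$. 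If $m=0$, then $\ol\psi$ lifts through $p:=(\cos(\cdot),\sin(\cdot))\co\R\to S^1$ to a map $\tpsi\co X\to\R$, well-defined by path-lifting from a basepoint (using $\ol\psi_*(\pi_1 X)=0$) and continuous because on each small $U_{[x],\eps}$ it equals a local section of $p$ composed with $\ol\psi$; this map is proper ($X$ is compact) and, on each $U_{[x],\eps}$, convex and open onto its image, since via the local section the local openness and spherical convexity of $\psi$ from Lemma~\ref{spherical intgoalB} become local openness and convexity of $\tpsi$ (a weakly monotone short geodesic inside a small arc becomes a weakly monotone straight path in the angular coordinate). By Theorem~\ref{local-global}, $\tpsi\co X\to[\theta_0,\theta_1]$ is surjective, convex and open with $\theta_0<\theta_1$, and by construction $\psi=\iota\circ(\cos(\cdot),\sin(\cdot))\circ\tpsi$; this is case (i).

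It remains to handle $m\ge 1$, which gives case (ii). Here $\ol\psi(X)=S^1$ (a proper image, being contained in a simply connected arc, would force $m=0$). Lift $\ol\psi$ through $p_m\co(\cos\theta,\sin\theta)\mapsto(\cos m\theta,\sin m\theta)$ to a continuous surjective map $\tpsi\co X\to S^1$: it is well-defined since $\ol\psi_*(\pi_1 X)=m\Z=(p_m)_*\pi_1(S^1)$, continuous by the same local-section argument with $p_m$, and surjective since otherwise $\tpsi$, hence $\ol\psi=p_m\circ\tpsi$, would lift to $\R$; moreover $\tpsi_*(\pi_1 X)=\Z$. The local models of $\psi$ transfer to $\tpsi$ (locally $\tpsi$ is $\psi$ followed by the angular homothety by $1/m$, which preserves spherical convexity and openness), so Lemma~\ref{spherical intgoalB} applies to $\tpsi$. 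Form the pullback $\hat Z:=X\times_{S^1,\tpsi}\R$ of $p$ along $\tpsi$, with $\pi_Z\co\hat Z\to X$ — a covering map, and \emph{connected} because the monodromy image $\tpsi_*(\pi_1 X)=\Z$ acts transitively on the fiber — and $\hat\psi_Z\co\hat Z\to\R$. Then $\hat Z$ is Hausdorff (closed in $X\times\R$), $\hat\psi_Z$ is proper (the preimage of $[-R,R]$ is closed in the compact $X\times[-R,R]$), and $\hat\psi_Z$ is locally convex and open (the local models of $\tpsi$ lift through local sections of $p$), so Theorem~\ref{local-global} gives $\hat\psi_Z\co\hat Z\to\R$ convex and open, with image an interval invariant under the deck translation by $2\pi$, hence all of $\R$. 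Descending: $\tpsi$ is open since $\pi_Z,\hat\psi_Z,p$ are, and $\tpsi$ is spherically convex, for given $x_0,x_1$ with $\tpsi(x_1)\ne-\tpsi(x_0)$ one lifts $x_0$ to $\hat x_0\in\hat Z$ and chooses $\hat x_1\in\pi_Z\Inv(x_1)$ with $|\hat\psi_Z(\hat x_1)-\hat\psi_Z(\hat x_0)|<\pi$ (possible: the $\hat\psi_Z$-values over $x_1$ form a full coset of $2\pi\Z$, and distance exactly $\pi$ would give $\tpsi(x_1)=-\tpsi(x_0)$), then convexity of $\hat\psi_Z$ joins them by a path whose $\hat\psi_Z$-image is a weakly monotone parametrization of a segment of length $<\pi$, which $p$ carries isometrically to a weakly monotone short geodesic in $S^1$; projecting by $\pi_Z$ yields the desired path. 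Since $\psi=\iota\circ p_m\circ\tpsi$, this is case (ii).

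Finally the three possibilities are mutually exclusive: (iii) means $\psi$ constant; (i) forces $\ol\psi$ to lift to $\R$ with a nondegenerate image interval, so $m=0$ and $\psi$ nonconstant; (ii) has $m\ge1$. The main obstacle in all of this is the construction and, above all, the \emph{continuity} of the lifts $\tpsi$: this is exactly why the preliminary observation that $X$ is path-connected matters, as it lets covering-space path-lifting proceed even though $X$ need not be locally path-connected. The other technical cost is the careful bookkeeping, through local sections of $p$ and $p_m$, that converts the local spherical-convexity/openness statements of Lemma~\ref{spherical intgoalB} into the local convexity/openness hypothesis of Theorem~\ref{local-global}; once that is set up, the rest is routine work with pullbacks and deck transformations.
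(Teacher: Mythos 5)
Your proof is correct and follows essentially the same route as the paper's: the same split into the two cases, the same use of Corollary~\ref{shortening} for items (2)--(4), and the same covering-space dichotomy on the image of $\pi_1(X)$ in $\pi_1(S^1)$ with Theorem~\ref{local-global} applied to the lift (and to the fibered product with $\R$ when $m\geq 1$). The only real deviation is in item (1): where the paper handles antipodal images by concatenating two short geodesics through an intermediate point, you rule out lengths $\geq 2\pi$ for the minimizing geodesic of Proposition~\ref{exists monotone geodesic} by excising a full revolution --- both arguments work.
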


\begin{Remark} \labell{rk:BOR2}
The case 
of Proposition~\ref{two possibilities}
in which $\psi(X)$ is not contained in a great circle
follows from Theorem~2.17 of~\cite{BOR2} 
% (Theorem 2.15 of the arXiv version)
with two slight adjustments.
In Definition~2.9 of~\cite{BOR2} 
of \emph{local convexity data},
% (Definition~2.8 of the arXiv version),
replace ``for every sufficiently small neighborhood $U_x$ of $x$
the set $f(U_x)$ is convex"
by ``there exist arbitrarily small neighborhoods $U_x$ of $x$
such that the set $f(U_x)$ is convex".
In part (ii) of Theorem~2.17 of~\cite{BOR2}, apply 
the ``uniquely geodesic" assumption to~$f(X)$, not to~$Y$. 
\end{Remark}

\begin{proof}[Proof of Proposition~\ref{two possibilities}]
We first analyze the case that the image $\psi(X)$ is not contained 
in a great circle.

Let $x_0$ and $x_1$ be points of $X$.
If $\psi(x_0) \neq -\psi(x_1)$, Part (1) follows from
Corollary~\ref{shortening}.
If $\psi(x_0) = -\psi(x_1)$, take any $x'$ such that
$\psi(x')$ is different from $\psi(x_0)$ and $\psi(x_1)$.
By Corollary~\ref{shortening}, connect $x_0$ to $x'$ and $x'$ to $x_1$
by paths whose images are short geodesics.
The concatenation of these paths is a path from $x_0$ to $x_1$
whose composition with $\psi$ is a weakly monotone geodesic
of length $\pi$.  This gives part (1).

Part~(2) follows from Part~(1) and Lemma~\ref{straight2}.

Part~(3) follows from Part~(1). For any two points $x_0, x_1$ in the
same level set, there exists a path $\gamma$ in $X$ connecting
them such that $\psi \circ \gamma$ is a weakly monotone short geodesic.
Because $\psi(x_0)=\psi(x_1)$, the composition $\psi \circ \gamma$ 
must be constant.

Part~(4) was proved in Corollary~\ref{shortening}.

\bigskip

We now analyze the case that the image $\psi(X)$ 
\emph{is} contained in a great circle.
Without loss of generality, assume that $n=2$ and $\iota = $identity.
By the theory of covering spaces, exactly one of the following possibilities
occurs.
\begin{itemize}
\item[(a)]
The image of $\pi_1(X)$ in $\pi_1(S^1)$ is trivial,
and there exists a map $\tpsi \co X \to \R$
such that the map $\psi \co X \to S^1$ is equal to the composition
$$ \begin{CD}
 X @> \tpsi >> \R @> (\cos(\cdot),\sin(\cdot))  >> S^1 .
\end{CD} $$
\item[(b)]
The image of $\pi_1(X)$ in $\pi_1(S^1) \cong \Z$ is the subgroup
of index $m$, and there exists a map $\psi \co X \to S^1$
such that the map $\psi$ is equal to the composition
$$ \begin{CD}
 X @> \tpsi >> S^1 @>
 (\cos\theta,\sin\theta) \mapsto (\cos m\theta,\sin m\theta) >> S^1
\end{CD} $$
and such that the map $\tpsi_* \co \pi_1(X) \to \pi_1(S^1)$ is onto.
\end{itemize}

\medskip

Assume that we are in case (a).

Let $x$ be a point of $X$.
Let $U$ be a neighborhood of $x$ in $X$
such that $\psi|_U$ is spherically convex and is open to its image.

Let $J \subset S^1$ be a half--circle that contains $\psi(x)$.
Then $U' := U \cap \psi\Inv(J)$ is also a neighborhood of $x$
on which $\psi$ is spherically convex and is open to its image,
and $U'$ is connected.

The preimage of $J$ under the map $(\cos(\cdot),\sin(\cdot))$
is a disjoint union of segments.
Because $U'$ is connected, $\tpsi(U')$ is contained in one
of these segments; call this segment $\tJ$.

The map $(\cos(\cdot),\sin(\cdot))$ restricts to a homeomorphism
from $\tJ$ onto $J$.  The map $\psi|_{U'}$ is the composition
of $\tpsi|_{U'}$ with this homeomorphism.
Therefore, because $\psi|_{U'}$ is open to its image,
so is $\tpsi|_{U'}$.

A lifting to $\R$ of a weakly monotone geodesic in $S^1$
is weakly monotone.
Therefore, because $\psi|_{U'}$ is spherically convex,
$\tpsi|_{U'}$ is convex.

We have shown that every point in $X$ has a neighborhood $U'$
such that $\tpsi|_{U'}$ is convex and is open to its image.
By Theorem~\ref{local-global} it follows
that $\tpsi \co X \to \R$
is convex and is open to its image.
Because $X$ is compact, the image of $\tpsi$
is either a single point or a closed segment.
This shows that exactly one of the possibilities (i) or (iii)
must occur.

\medskip

Now assume that we are in case (b).

Let $x$ be a point of $X$.
Let $U$ be a neighborhood of $x$ in $X$
such that $\psi|_U$ is spherically convex and is open to its image.

Let $J \subset S^1$ be an open arc of the circle
that contains $\psi(x)$ and that has length $<2\pi/m$.
Then $U' := U \cap \psi\Inv(J)$ is also a neighborhood of $x$
on which $\psi$ is spherically convex and is open to its image,
and $U'$ is connected.

The preimage of $J$ under the map
$(\cos\theta,\sin\theta) \mapsto (\cos m\theta,\sin m\theta)$
is a disjoint union of $m$ arcs of $S^1$.
Because $U'$ is connected, $\tpsi(U')$ is contained in one
of these arcs; call this arc $\tJ$.

The map $(\cos(\cdot),\sin(\cdot))$ restricts to a homeomorphism
from $\tJ$ onto $J$.  The map $\psi|_{U'}$ is the composition
of $\tpsi|_{U'}$ with this homeomorphism.
Therefore, because $\psi|_{U'}$ is open to its image,
so is $\tpsi|_{U'}$.

A lifting to $\tJ$ of a weakly monotone geodesic in $J$
is a weakly monotone geodesic in $\tJ$.
Therefore, because $\psi|_{U'}$ is spherically convex,
so is $\tpsi|_{U'}$.

We have shown that every point in $X$ has a neighborhood $U'$
such that $\tpsi|_{U'}$ is spherically convex and is open to its image.
Also, $\tpsi$ induces a surjection $\pi_1(X) \to \pi_1(S^1)$.
It remains to show that these assumptions imply
that $\tpsi\co X \to S^1$ is spherically convex and open.

We have a commuting diagram
$$ \xymatrix{
 \widetilde{X} \ar[r]^{\tpsi'} \ar[d]_{\pi}
   & \R \ar[d]^{(\cos(\cdot),\sin(\cdot))} \\
   X  \ar[r]_{\tpsi}               & S^1
}$$
where $\tilde{X}$ is the fibered product $X \times_{S^1} \R$
and $\pi \co \tilde{X} \to X$ is the covering map.
An argument similar to that of case (a) applied to $\tpsi \circ \pi$
show that every point in $\tilde{X}$ has a neighborhood
on which $\tpsi'$ is open to its image and is convex.
The map $\tpsi' \co \tilde{X} \to \R$ is proper;
this follows from the fact that $X$ is compact.
The space $\widetilde{X}$ is connected; this follows from the assumptions
that the map $\tpsi_* \co \pi_1(X) \to \pi_1(S^1)$ is onto
and the map $\tpsi \co X \to S^1$ is spherically convex.
By Theorem~\ref{local-global} it follows that the map $\tpsi'$
is open to its image and is convex.
Because we are in case (b), the map $\tpsi'$ is onto.
Hence $\tpsi\co X \to S^1$ is spherically convex and open.
\end{proof}

% ========================================================================
\section{Local to global convexity for conification of nonvanishing
functions}
% ========================================================================
\labell{sec:nonvanishing}

\begin{Lemma}\labell{slide'}
Let $X$ be a Hausdorff topological space
and $\varphi \co X \to \R^n$ a continuous map.
Define a map $\Psi \co X \times \Rplus \to \R^n$
by $\Psi(x,\lambda) = \lambda \varphi(x)$.
Suppose that the map $\varphi$ is convex.
Then the map $\Psi$ is convex.
\end{Lemma}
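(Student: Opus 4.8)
The plan is to join two arbitrary points $(x_0,\lambda_0)$ and $(x_1,\lambda_1)$ of $X \times \Rplus$ by an explicit path, moving the $X$--coordinate by means of the convexity of $\varphi$ and then choosing the $\Rplus$--coordinate by a closed formula so that the composition with $\Psi$ becomes a weakly monotone straight parametrization of the segment between $\Psi(x_0,\lambda_0) = \lambda_0\varphi(x_0)$ and $\Psi(x_1,\lambda_1) = \lambda_1\varphi(x_1)$.

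First, using the convexity of $\varphi$, I would pick a path $\gamma \co [0,1] \to X$ with $\gamma(0) = x_0$, $\gamma(1) = x_1$, and $\varphi \circ \gamma$ weakly monotone straight, and write $v_i = \varphi(x_i)$. Weak monotone straightness forces the image of $\varphi \circ \gamma$ into the segment $[v_0,v_1]$, and, when $v_0 \neq v_1$, it forces the affine coordinate $s(t)$ of $\varphi(\gamma(t))$ on that segment to depend continuously and weakly monotonically on $t$; so $\varphi(\gamma(t)) = (1-s(t))v_0 + s(t)v_1$ with $s \co [0,1]\to[0,1]$, $s(0)=0$, $s(1)=1$. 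When $v_0 = v_1$ the map $\varphi\circ\gamma$ is automatically constant, and I would just take $s(t) = t$.

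Next, I would set
\[
  \lambda(t) = \frac{\lambda_0\lambda_1}{\lambda_1(1-s(t)) + \lambda_0 s(t)} .
\]
The denominator is a convex combination of the positive numbers $\lambda_0$ and $\lambda_1$, hence positive, so $\lambda \co [0,1] \to \Rplus$ is continuous with $\lambda(0) = \lambda_0$ and $\lambda(1) = \lambda_1$; thus $t \mapsto (\gamma(t),\lambda(t))$ is a path in $X \times \Rplus$ from $(x_0,\lambda_0)$ to $(x_1,\lambda_1)$. A direct substitution gives
\[
  \Psi(\gamma(t),\lambda(t)) = \lambda(t)\varphi(\gamma(t)) = (1-r(t))\,\lambda_0 v_0 + r(t)\,\lambda_1 v_1 ,
\]
where $r(t) := \lambda_0 s(t) / (\lambda_1(1-s(t)) + \lambda_0 s(t))$. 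Since the function $\sigma \mapsto \lambda_0\sigma / (\lambda_1(1-\sigma) + \lambda_0\sigma)$ is strictly increasing on $[0,1]$ and carries $0$ to $0$ and $1$ to $1$, the function $r$ is continuous and weakly monotone with $r(0) = 0$ and $r(1) = 1$. Hence $\Psi \circ (\gamma,\lambda)$ is a weakly monotone straight parametrization of the segment $[\lambda_0 v_0, \lambda_1 v_1] = [\Psi(x_0,\lambda_0), \Psi(x_1,\lambda_1)]$, which is exactly the path required by Definition~\ref{convex map}. Therefore $\Psi$ is convex.

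I expect the only subtlety to be the book-keeping for degenerate segments --- when $0 \in [v_0,v_1]$, or when $v_0$ and $v_1$ are parallel or antiparallel --- but the displayed identity for $\Psi(\gamma(t),\lambda(t))$ holds as an algebraic identity in $v_0$ and $v_1$, with no appeal to their linear independence, so the same formula for $\lambda(t)$ works verbatim in every case. Geometrically, $\lambda(t)$ is chosen so that $\lambda(t)\varphi(\gamma(t))$ is the point where the ray from the origin through $\varphi(\gamma(t))$ meets the segment $[\lambda_0 v_0, \lambda_1 v_1]$, and the formula remains valid even when that intersection fails to be a single point.
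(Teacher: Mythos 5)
Your proof is correct and is essentially identical to the paper's: the formulas for $\lambda(t)$ and $r(t)$ agree (after clearing denominators) with the paper's $\lambda(t)$ and $\tilde{s}(t)$, and the verification that $r$ is weakly monotone is the same computation in a slightly different guise.
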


\begin{proof}
Let $(x_0, \lambda_0)$ and $(x_1, \lambda_1)$ be two points 
in $X \times \Rplus$.
Because the map $\varphi$ is convex,
there exists a path $x(t)$ in $X$ from $x_0$ to $x_1$
such that $\varphi(x(t))$ is weakly monotone straight.
That is, there exists a weakly monotone continuous function
$$ s \co [0,1] \to [0,1] $$
such that $s(0)=0$ and $s(1)=1$ and such that
$$ \varphi(x(t)) =
 (1 - s(t)) \varphi(x_0) + s(t) \varphi(x_1) .$$
We rewrite the right hand side as
$$ \frac{1 - s(t)}{\lambda_0} \lambda_0 \varphi(x_0)
 + \frac{ s(t) }{\lambda_1} \lambda_1 \varphi(x_1) ,$$
and then divide both sides of the equation by the sum of the coefficients,
$$ \frac{1-s(t)}{\lambda_0} \, + \, \frac{s(t)}{\lambda_1} ,$$
which is positive.
Setting
$$ \lambda(t) \ = \ \frac{1}{\displaystyle{
             \frac{1-s(t)}{\lambda_0} + \frac{s(t)}{\lambda_1}} } $$
and
\begin{equation} \labell{tilde s}
 \ts(t) \ = \ \frac{ \displaystyle{ \frac{s(t)}{\lambda_1} } }
            { \displaystyle{ \frac{1-s(t)}{\lambda_0}
                           + \frac{s(t)}{\lambda_1}  } }  \ ,
\end{equation}
we get
\begin{equation} \labell{path}
 \lambda(t) \varphi(x(t)) \ =  \
   ( 1 - \ts(t) ) \, \lambda_0 \, \varphi(x_0)
        \ + \ \ts(t) \,  \lambda_1 \, \varphi(x_1) \ .
\end{equation}
Because $\lambda(t)$ is a path of positive numbers connecting
$\lambda_0$ to $\lambda_1$,
the path $y(t)=(x(t),\lambda(t))$ in $X \times \Rplus$
connects $(x_0,\lambda_0)$ to $(x_1,\lambda_1)$.
It remains to show that 
$\Psi(y(t))$ is weakly monotone straight. By \eqref{path}, 
it suffices to show that $\ts(t)$ is a weakly monotone continuous function
from $[0,1]$ to $[0,1]$ such that $\ts(0)=0$ and $\ts(1)=1$.
Because $s(\cdot)$ is a continuous function
from $[0,1]$ to $[0,1]$ such that $s(0) = 0$ and $s(1) = 1$,
and by~\eqref{tilde s}, we see that $\ts(\cdot)$ is a continuous function
from $[0,1]$ to $[0,1]$ such that $\ts(0) = 0$ and $\ts(1) = 1$.
Monotonicity of $\ts(t)$ follows from that of $s(t)$
since, whenever $s(t) > 0$
$$ \frac{1}{\ts(t)} = 1
   + \frac{\lambda_1 }{\lambda_0}
     \left( \frac{1}{s(t)} - 1\right) .$$
\end{proof}

\begin{Lemma} \labell{up-down}
Let $X$ be a Hausdorff topological space and
$\varphi \co X \to \R^n$ a continuous nonvanishing map.
Define maps $\Psi \co X \times \Rplus \to \R^n$
and $\Psibar \co X \times \Rplus \to S^{n-1}$
by $\Psi(x,\lambda) = \lambda \varphi(x)$
and $\Psibar = \Psi / \| \Psi \|$.
Also define $\psi = \varphi / \| \varphi \| \co X \to S^{n-1}$.

Let $U$ be an open subset of $X \times \Rplus$.
Let $V$ be the image of $U$ under the projection $X \times \Rplus \to X$.

The set $V$ is an open subset of $X$.
If the map $\Psi|_U$ is convex, then the map $\psi|_V$ is spherically convex.
If the map $\Psibar|_U$ is open to its image, so is the map $\psi|_V$.
\end{Lemma}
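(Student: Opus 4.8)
The plan is to verify the three assertions separately, in the order stated. First, that $V$ is open in $X$ is immediate: $V$ is the image of the open set $U$ under the projection $X \times \Rplus \to X$, which is an open map. Second, for spherical convexity of $\psi|_V$, I would take two points $x_0, x_1 \in V$ with $\psi(x_1) \neq -\psi(x_0)$ and produce a path in $V$ whose $\psi$-composition is a weakly monotone short geodesic. Since $x_0, x_1 \in V$, there exist $\lambda_0, \lambda_1 > 0$ with $(x_0,\lambda_0), (x_1,\lambda_1) \in U$. Here I would exploit the key geometric fact: the condition $\psi(x_1) \neq -\psi(x_0)$ means $\varphi(x_0)$ and $\varphi(x_1)$ are not anti-parallel, so the segment $[\varphi(x_0),\varphi(x_1)]$ misses the origin; but I need the segment $[\Psi(x_0,\lambda_0),\Psi(x_1,\lambda_1)] = [\lambda_0\varphi(x_0),\lambda_1\varphi(x_1)]$ to miss the origin as well, which it does for exactly the same reason (anti-parallelism is scale-invariant). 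Then convexity of $\Psi|_U$ gives a path $y(t) = (x(t),\lambda(t))$ in $U$ from $(x_0,\lambda_0)$ to $(x_1,\lambda_1)$ with $\Psi \circ y$ weakly monotone straight; since this straight path avoids the origin, Lemma~\ref{straight} tells us $(\Psi \circ y)/\|\Psi \circ y\| = \Psibar \circ y$ is a weakly monotone short geodesic in $S^{n-1}$. Finally, observe that $\Psibar(x,\lambda) = \lambda\varphi(x)/\|\lambda\varphi(x)\| = \varphi(x)/\|\varphi(x)\| = \psi(x)$ does not depend on $\lambda$, so $\Psibar \circ y = \psi \circ x$, where $x(t)$ is the projected path, which lies in $V$. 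This is the required path.

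Third, for openness: suppose $\Psibar|_U$ is open as a map to its image, and I must show $\psi|_V$ is open to its image. Let $W \subset V$ be open in $V$; then $\pi^{-1}(W) \cap U$ is open in $U$ (where $\pi\co X\times\Rplus \to X$), so $\Psibar(\pi^{-1}(W)\cap U)$ is open in $\Psibar(U)$. Using again that $\Psibar = \psi \circ \pi$, we have $\Psibar(\pi^{-1}(W)\cap U) = \psi(W)$ — here I should check the equality of images carefully: ``$\subseteq$'' is clear, and ``$\supseteq$'' holds because any $x \in W \subseteq V$ has some $\lambda$ with $(x,\lambda)\in U$, hence $(x,\lambda)\in \pi^{-1}(W)\cap U$ and $\psi(x) = \Psibar(x,\lambda)$ is in the image. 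Similarly $\Psibar(U) = \psi(V)$. Thus $\psi(W)$ is open in $\psi(V)$, proving $\psi|_V$ is open to its image.

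The main obstacle — really the only subtle point — is the bookkeeping around the hypothesis $\psi(x_1) \neq -\psi(x_0)$ in the convexity part: one must be careful that convexity of $\Psi|_U$ only guarantees a weakly monotone \emph{straight} path, and a straight path between two anti-parallel points (or between a point and the origin) would pass through the origin, where the radial projection is undefined; so it is essential to note that the non-antipodality hypothesis on $\psi$ rules this out for the rescaled endpoints $\lambda_0\varphi(x_0)$ and $\lambda_1\varphi(x_1)$, and then to invoke Lemma~\ref{straight} with the ``does not pass through the origin'' hypothesis genuinely satisfied. The rest is routine manipulation of images under the projection $\pi$ together with the observation that $\Psibar$ factors through $\pi$.
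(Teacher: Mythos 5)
Your proof is correct and follows essentially the same route as the paper's: both rest on the observation that $\Psibar = \psi \circ \pi$ where $\pi$ is the (open, surjective) projection $U \to V$, together with Lemma~\ref{straight} to convert the weakly monotone straight path supplied by convexity of $\Psi|_U$ into a weakly monotone short geodesic. The paper phrases this more tersely via a commuting diagram, while you unpack the same argument pointwise; your care about the segment avoiding the origin is fine (and is in fact automatic here, since $\varphi$ is nonvanishing and $\lambda>0$, so $\Psi$ never vanishes and a weakly monotone straight path in $U$ has image inside the segment joining its endpoints).
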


\begin{proof}
Openness of $V$ in $X$ follows from the definition 
of the product topology on $X \times \Rplus$.

The set $\Psibar(U)$ is equal to the set $\psi(V)$; call this set $W$.
We have a commuting diagram of continuous maps:
$$ \xymatrix{
X \times \Rplus & \supset & U \ar[d]_{\text{projection}} \ar[rd]^{\Psibar|_U} 
  & & \\
X  & \supset & V \ar[r]_{\psi|_V} & W & \subset S^{n-1}.
}$$

Suppose that the map $\Psi|_U \co U \to \R^n$ is convex.
By Lemma~\ref{straight}, the map $\Psibar|_U \co U \to W$
is spherically convex.
Because the projection map $U \to V$ is onto,
and by the commuting diagram,
the map $\psi|_V \co V \to W$ is spherically convex.

Suppose that the map $\Psibar|_U \co U \to W$ is open.
Because the projection map $U \to V$ is onto,
and by the commuting diagram,
the map $\psi|_V \co V \to W$ is open.
\end{proof}

\begin{Corollary} \labell{every}
Let $X$ be a Hausdorff topological space and
$\varphi \co X \to \R^n$ a continuous nonvanishing map.
Define maps $\Psi \co X \times \Rplus \to \R^n$
and $\Psibar \co X \times \Rplus \to S^{n-1}$
by $\Psi(x,\lambda) = \lambda \varphi(x)$
and $\Psibar = \Psi / \| \Psi \|$.
Also define $\psi = \varphi / \| \varphi \| \co X \to S^{n-1}$.

Suppose that each point in $X \times \Rplus$ has a neighborhood $U$
such that the map $\Psi|_U \co U \to \R^n$ is convex
and such that the map $\Psibar|_U \co U \to \Psibar(U) \subset S^{n-1}$
is open to its image.

Then each point in $X$ has a neighborhood $V$
such that the map $\psi|_V$ is spherically convex
and is open to its image.
\end{Corollary}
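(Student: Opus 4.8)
The plan is to deduce this directly from Lemma~\ref{up-down}, which already packages the passage from a neighborhood in $X \times \Rplus$ to a neighborhood in $X$. Fix a point $x \in X$. Choose any $\lambda_0 \in \Rplus$ and consider the point $(x,\lambda_0) \in X \times \Rplus$. By hypothesis, $(x,\lambda_0)$ has a neighborhood $U$ in $X \times \Rplus$ such that $\Psi|_U \co U \to \R^n$ is convex and $\Psibar|_U \co U \to \Psibar(U)$ is open to its image.

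Next, let $V$ be the image of $U$ under the projection $X \times \Rplus \to X$. Since $(x,\lambda_0) \in U$, we have $x \in V$, and by the first assertion of Lemma~\ref{up-down} the set $V$ is open in $X$; hence $V$ is an open neighborhood of $x$. The second assertion of Lemma~\ref{up-down}, applied to this $U$ and $V$, shows that because $\Psi|_U$ is convex, the map $\psi|_V$ is spherically convex; and the third assertion shows that because $\Psibar|_U$ is open to its image, so is $\psi|_V$. Since $x \in X$ was arbitrary, this proves the corollary.

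There is essentially no obstacle here: the content has been isolated into Lemma~\ref{up-down}, and the only points to verify are that $x$ lies in the projected set $V$ (immediate from $(x,\lambda_0) \in U$) and that $V$ is open (the first assertion of Lemma~\ref{up-down}, itself a consequence of the definition of the product topology). No hypothesis on $U$ beyond openness is needed, so the arbitrariness of the neighborhoods furnished by the hypothesis causes no trouble.
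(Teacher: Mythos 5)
Your proof is correct and is exactly the argument the paper intends: the Corollary is stated without proof precisely because it follows from Lemma~\ref{up-down} by projecting the neighborhood $U$ of $(x,\lambda_0)$ to an open neighborhood $V$ of $x$. Nothing to add.
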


\begin{Remark} \labell{cone}
Let $D_r \subset \R^2$ denote the closed disc of radius $r$
and center $(r,0)$.   Let 
$$ X = \{ (x,y,1+r) \in \R^3 \ | \ (x,y) \in D_r \text{ and } 0<r<1 \}.$$
Let $\varphi \co X \to \R^3$ be the inclusion map
and define $\Psi$ and $\psi$ as in Lemma~\ref{up-down}.
Then $\Psi$ is convex (by Lemma~\ref{slide'}),
but it is not open as a map to its image, and neither is $\psi$.
This shows that, in Lemma~\ref{up-down},
convexity of the map $\Psi|_U$ does not imply 
that the map $\psi|_V$ is open to its image.
Compare with Remark~\ref{projection not open}.
\end{Remark}

\begin{Lemma}\labell{to cone}
Let $C$ be a subset of $\R^n$ such that $\Rplus \cdot C = C$.
Let $U$ be a Hausdorff space and $\Psi \co U \to C$
a continuous nonvanishing open map.
Then $\Psibar := \Psi / \| \Psi \| \co U \to C \cap S^{n-1}$
is an open map.
\end{Lemma}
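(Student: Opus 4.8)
The plan is to factor $\Psibar$ as a composition of two open maps and to use that a composition of open maps is open. Let $\rho \co C \ssminus \{0\} \to C \cap S^{n-1}$ denote the radial retraction $\rho(v) = v/\|v\|$; since $\Rplus \cdot C = C$, this map is well defined and surjective, and $\Psibar = \rho \circ \Psi$.

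First I would dispose of a minor point. Because $\Psi$ is nonvanishing, its image lies in $C \ssminus \{0\}$. The set $C \ssminus \{0\}$ is open in $C$, it again satisfies $\Rplus \cdot (C \ssminus \{0\}) = C \ssminus \{0\}$, and $(C \ssminus \{0\}) \cap S^{n-1} = C \cap S^{n-1}$; moreover $\Psi$, viewed as a map $U \to C \ssminus \{0\}$, is still open, since the $\Psi$-image of an open set is open in $C$ and lies in the open subset $C \ssminus \{0\}$. So we may and do assume from now on that $0 \notin C$.

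The heart of the matter is that $\rho \co C \to C \cap S^{n-1}$ is open. I would prove this via ``polar coordinates''. The map
$$ h \co (C \cap S^{n-1}) \times \Rplus \longrightarrow C, \qquad h(w,t) = tw, $$
is continuous, being a restriction of scalar multiplication; it is bijective, with surjectivity using $\Rplus \cdot C = C$ and injectivity following by comparing norms; and its inverse $v \mapsto (v/\|v\| , \|v\|)$ is continuous on $C \subset \R^n \ssminus \{0\}$. Hence $h$ is a homeomorphism, and under it the map $\rho$ corresponds to the projection $(C \cap S^{n-1}) \times \Rplus \to C \cap S^{n-1}$ onto the first factor. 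Since a projection out of a product is an open map, $\rho$ is open. (Alternatively, one argues directly: for open $O \subseteq C$ the saturation $\rho\Inv(\rho(O))$ equals $\Rplus \cdot O$, which is open in $C$ as a union of the sets $\lambda O$ with $\lambda > 0$, each a homeomorphic image of $O$ under $v \mapsto \lambda v$; then $\rho(O) = s\Inv\bigl(\rho\Inv(\rho(O))\bigr)$ is open, where $s \co C \cap S^{n-1} \hookrightarrow C$ is the continuous inclusion.)

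Finally, $\Psibar = \rho \circ \Psi$ is the composition of the open map $\Psi \co U \to C$ with the open map $\rho \co C \to C \cap S^{n-1}$, hence open. The argument is routine; the only step that needs a moment of care is the reduction to the case $0 \notin C$, without which the polar-coordinates map $h$ fails to be surjective.
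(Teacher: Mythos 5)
Your proof is correct and follows essentially the same route as the paper's: the paper also writes $\Psibar$ as $\Psi$ followed by the homeomorphism $w \mapsto (\|w\|, w/\|w\|)$ onto $\Rplus \times (C \cap S^{n-1})$ followed by the product projection, which is exactly your polar-coordinates argument for the openness of $\rho$, regrouped. Your preliminary reduction to viewing $\Psi$ as an open map into $C \ssminus \{0\}$ is the same (implicit) first step in the paper, handled correctly.
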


\begin{proof}
The map $\Psibar$ is the composition of three maps:
the map $\Psi$ from $U$ to $C \ssminus \{ 0 \}$,
the map $w \mapsto (\| w \| , \frac{w}{\| w \|})$
from $C \ssminus \{0\}$ to $\Rplus \times (C \cap S^{n-1})$,
and the projection map $\Rplus \times (C \cap S^{n-1}) \to C \cap S^{n-1}$.
The first of these maps is open by assumption;
the second is open because it is a homeomorphism;
the third is open by the definition of the product topology.
Being the composition of three open maps, $\Psibar$ is open.
\end{proof}

\begin{Proposition} \labell{stretch}
Let $X$ be a Hausdorff topological space and
$\varphi \co X \to \R^n$ a nonvanishing continuous map.
Define maps $\Psi \co X \times \Rplus \to \R^n$
and $\Psibar \co X \times \Rplus \to S^{n-1}$
by $\Psi(x,\lambda) = \lambda \varphi(x)$
and $\Psibar = \Psi / \| \Psi \| $.

Assume that each point in $X \times \Rplus$ has a neighborhood $U$
such that the map $\Psi|_U \co U \to \R^n$ is convex
and such that the map $\Psibar|_U \co U \to \Psibar(U) \subset S^{n-1}$
is open to its image.

Assume that $X$ is compact and connected;
assume that the image of the map $\varphi$ is not contained
in a two dimensional subspace of $\R^n$.

Then, for every two points $y_0$ and $y_1$ in $X \times \R_{>0}$,
if the segment $[\Psi(y_0),\Psi(y_1)]$ does not contain the origin,
there exists a path
$\gamma \co [0,1] \to X \times \R_{>0}$
such that $\gamma(0) = y_0$, $\gamma(1) = y_1$,
and $\Psi \circ \gamma$ is weakly monotone straight.
Also, the map $\Psi$ is open as a map to its image.
\end{Proposition}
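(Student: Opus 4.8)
The plan is to reduce everything to the spherical situation handled in Corollary~\ref{shortening}. Set $\rho = \| \varphi \| \co X \to \Rplus$ and $\psi = \varphi / \rho \co X \to S^{n-1}$; these are continuous because $\varphi$ is continuous and nowhere zero. First I would observe that $\psi(X)$ is not contained in a great circle: if $\psi(X)$ were contained in $P \cap S^{n-1}$ for a two dimensional subspace $P$, then $\varphi(x) = \rho(x)\psi(x) \in P$ for all $x$, contradicting the hypothesis on $\varphi$. By Corollary~\ref{every}, every point of $X$ has a neighborhood $V$ such that $\psi|_V$ is spherically convex and open to its image; since $X$ is compact and connected and $\psi(X)$ lies in no great circle, Corollary~\ref{shortening} then shows that $\psi \co X \to \psi(X)$ is itself spherically convex and open to its image. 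Note also that $\psi(X) = C \cap S^{n-1}$ and $\Psi(X \times \Rplus) = C \ssminus \{ 0 \} =: C^*$, where $C = \R_{\geq 0}\cdot\psi(X)$; in particular $\Rplus \cdot C^* = C^*$.

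To prove the path--lifting statement, let $y_0 = (x_0,\lambda_0)$ and $y_1 = (x_1,\lambda_1)$ and suppose $[\Psi(y_0),\Psi(y_1)]$ does not contain the origin. Put $\mu_i = \| \Psi(y_i) \| = \lambda_i \rho(x_i) > 0$, so that $\Psi(y_i) = \mu_i \psi(x_i)$. If $\psi(x_0) = -\psi(x_1)$ then the segment $[\mu_0\psi(x_0),\mu_1\psi(x_1)]$ passes through the origin; hence $\psi(x_1) \neq -\psi(x_0)$. By spherical convexity of $\psi$ there is a path $\delta \co [0,1] \to X$ from $x_0$ to $x_1$ such that $\psi\circ\delta$ is a weakly monotone short geodesic. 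Applying Lemma~\ref{straight2} to $\psi\circ\delta$ with the positive numbers $\mu_0,\mu_1$ yields a weakly monotone straight path $g \co [0,1] \to \R^n$ with $g(0) = \mu_0\psi(x_0) = \Psi(y_0)$, $g(1) = \mu_1\psi(x_1) = \Psi(y_1)$, and $g(t)/\|g(t)\| = \psi(\delta(t))$. Setting $\lambda(t) = \|g(t)\|/\rho(\delta(t))$ and $\gamma(t) = (\delta(t),\lambda(t))$, the function $\lambda$ is continuous and positive, $\gamma(0) = (x_0,\mu_0/\rho(x_0)) = y_0$, $\gamma(1) = y_1$, and $\Psi(\gamma(t)) = \|g(t)\|\,\psi(\delta(t)) = g(t)$ is weakly monotone straight, as desired. (When $\Psi(y_0) = \Psi(y_1)$ this still works: then $\mu_0 = \mu_1$ and $\psi(x_0) = \psi(x_1)$, the short geodesic $\psi\circ\delta$ is necessarily constant, and $g$ is constant.)

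For openness, note that $\Psibar(x,\lambda) = \lambda\varphi(x)/\|\lambda\varphi(x)\| = \psi(x)$, so $\Psibar$ is the composition of the projection $X \times \Rplus \to X$ with $\psi$; being a composition of open maps (the projection is open, and $\psi$ is open to its image $C \cap S^{n-1}$), the map $\Psibar \co X \times \Rplus \to C \cap S^{n-1}$ is open. Since $C$ is invariant under positive scaling, the map $m \co (C \cap S^{n-1}) \times \Rplus \to C^*$, $m(u,s) = s u$, is a homeomorphism, with inverse $w \mapsto (w/\|w\|,\|w\|)$. The homeomorphism $h \co X \times \Rplus \to X \times \Rplus$ given by $h(x,\lambda) = (x,\lambda\rho(x))$ satisfies $\Psi = \Psi'\circ h$, where $\Psi'(x,s) = s\psi(x)$; and $\Psi'$ is the composition of $\psi \times \mathrm{id}_{\Rplus} \co X \times \Rplus \to (C \cap S^{n-1}) \times \Rplus$ with $m$. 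The first of these is open, being a product of an open map with the identity, and the second is a homeomorphism, so $\Psi'$, and therefore $\Psi \co X \times \Rplus \to C^*$, is open.

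The proof is mostly an assembly of earlier results. The step that demands the most care is the construction of the lifted path $\gamma$: the $\Rplus$--coordinate must be chosen so that $\Psi\circ\gamma$ coincides with the straight path $g$ furnished by Lemma~\ref{straight2}, which pins it down as $\lambda(t) = \|g(t)\|/\rho(\delta(t))$ and requires checking that the endpoints then come out right. A secondary subtlety is that one needs the openness of $\psi$ --- together with the fact that $C^*$ is scaling invariant --- in order to deduce openness of $\Psi$; this is the reverse of the implication recorded in Lemma~\ref{to cone}.
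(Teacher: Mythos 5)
Your proof is correct and follows essentially the same route as the paper: reduce to the sphere via Corollary~\ref{every} and Corollary~\ref{shortening}, lift the resulting short geodesic to a straight path with Lemma~\ref{straight2} and read off the $\Rplus$--coordinate as $\|g(t)\|/\|\varphi(\delta(t))\|$, and obtain openness by factoring $\Psi$ through the homeomorphism $(x,\lambda)\mapsto(x,\lambda\|\varphi(x)\|)$ and the open map $\psi\times\mathrm{id}$. The only cosmetic difference is that you feed Lemma~\ref{straight2} the scalars $\mu_i=\lambda_i\|\varphi(x_i)\|$ rather than writing the endpoints as $\lambda_i\varphi(x_i)$, which amounts to the same thing.
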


\begin{proof}
Let
$$\psi = \varphi / \| \varphi \| \co X \to S^{n-1}.$$

By Corollary~\ref{every},
every point in $X$ has a neighborhood $V$
such that the map $\psi|_V$ is spherically convex and is open to its image.

Because the image of $\varphi$ is not contained
in a two dimensional subspace of $\R^n$,
the image of $\psi$ is not contained in a great circle.

Let $y_0 = (x_0,\lambda_0)$
and $y_1 = (x_1,\lambda_1)$ be two points in $X \times \Rplus$
such that the segment $[\Psi(y_0),\Psi(y_1)]$
does not contain the origin.
Then $x_0$ and $x_1$ are points in $X$
such that $\psi(x_1) \neq - \psi(x_0)$.

By Corollary~\ref{shortening},
there exists a path $x(t)$, $0 \leq t \leq 1$,
such that $x(0) = x_0$ and $x(1) = x_1$,
and such that $\psi(x(t))$ is a weakly monotone short geodesic.

By Lemma~\ref{straight2} there exists a weakly monotone straight path
$$ \gammabar \co [0,1] \to \R^n $$
such that $\gammabar(0) = \lambda_0 \varphi(x_0)$,
$\gammabar(1) = \lambda_1 \varphi(x_1)$,
and $\gammabar(t) / \| \gammabar(t) \| = \psi(x(t))$.
Then $\gammabar(t) = \lambda(t) \varphi(x(t))$,
where $\lambda(t) = \| \gammabar(t) \| / \| \varphi(x(t))\|$.
The path
$ \gamma(t) := (x(t),\lambda(t))$ in $X \times \Rplus$
satisfies $\gamma(0) = y_0$, $\gamma(1) = y_1$,
and $\Psi(\gamma(\cdot)) = \gammabar(\cdot)$ is weakly monotone straight.

By Corollary~\ref{shortening},
the map $\psi \co X \to S^{n-1}$ is open as a map to its image.
From this it follows that the map
$$ X \times \Rplus \, \to \, S^{n-1} \times \Rplus \quad , \quad
(x,\lambda) \mapsto (\psi(x),\lambda) $$
is open as a map to its image.
From the commuting diagram
\begin{equation} \labell{diagram}
 \xymatrix{
 & X \times \Rplus \ar[rr]^{ (x,\lambda) \mapsto (\psi(x),\lambda) }
 \ar[d]_{ (x,\lambda) \mapsto (x, \frac{\lambda}{\| \varphi(x) \| } ) }
 & & S^{n-1} \times \Rplus \ar[d]^{ (\alpha,\lambda) \mapsto \lambda\alpha} \\
 & X \times \Rplus \ar[rr]^{\Psi} & & \R^n \ssminus \{ 0 \} ,
}
\end{equation}
in which the vertical arrows are homeomorphisms
and the top arrow is open to its image,
it follows that $\Psi$ is open to its image.
\end{proof}

\begin{Example} \labell{Rn minus origin}
Let $\varphi \co S^{n-1} \to \R^n$ be the inclusion map.
Then the map
$$ \Psi \co S^{n-1} \times \Rplus \to \R^n
 \quad , \quad (x,\lambda) \mapsto \lambda x, $$
satisfies the assumptions and the conclusion of Proposition~\ref{stretch}.
Note that the map $\Psi$ itself is not convex.
\end{Example}

\begin{Example} \labell{dim2}
Let $\varphi \co [-\pi,\pi] \to \R^2$ be the map
$$\varphi(t) = ( \cos t , \sin t ) .$$
Then the map
$$ \Psi \co [-\pi,\pi] \times \Rplus \to \R^2
\quad , \quad (t,\lambda) \mapsto ( \lambda \cos t , \lambda \sin t) $$
is not open as a map to its image, $\R^2 \ssminus \{ 0 \}$,
although every point has a neighborhood on which
the map is convex and is open as a map to its image.
Also, if $[t_0,t_1] \subset [-\pi,\pi]$ is a subinterval
of length $> \pi$, then, for any $\lambda_0,\lambda_1 \in \Rplus$,
the segment $[\Psi(t_0,\lambda_0),\Psi(t_1,\lambda_1)]$
does not contain the origin, but the points $(t_0,\lambda_0)$
and $(t_1,\lambda_1)$ cannot be connected by a path
in $[-\pi,\pi] \times \Rplus$ whose image under $\Psi$ is this segment.
Thus, the conclusions of Proposition~\ref{stretch}
do not always hold if we allow the image of $\varphi$
to be contained in a two dimensional space.
\end{Example}

% ==========================================
\section{Excision of a neighborhood of zero}
% ==========================================
\labell{sec:excision}

\begin{Remark} \labell{restrict open}
We repeatedly use the following properties 
of a continuous map that is open as a map to its image: 
\begin{enumerate}
\item the restriction of this map
to an open subset is open as a map to its image, and 
\item the restriction of this map to the preimage of any set is also open to its image.
\end{enumerate} 
Thus, given a map $\varphi \co U \to \R^n$,
if $\varphi$ is open as a map to its image, then,
for any open subset $V \subset U$, 
the map $\varphi|_{\varphi\Inv(\del B_\delta)\cap V}$ is open 
as a map to its image.
\end{Remark}

\begin{Lemma} \labell{disjoint'}
Let $X$ be a Hausdorff topological space
and $\varphi \co X \to \R^n$ a continuous proper map.
Suppose that for every point $x$ of $\varphi\Inv(0)$
there exists an open neighborhood $U_x$ of $x$ in $X$
and a closed convex cone $C_x$ in $\R^n$ with vertex at the origin
such that
\begin{itemize}
\item
The cone $C_x$ is not contained in a two dimensional subspace of $\R^n$.
\item
The image $\varphi(U_x)$ is an open subset of $C_x$.
\item
The map $\varphi|_{U_x} \co U_x \to \varphi(U_x)$
is open and convex and, for sufficiently small $\delta > 0$, every point in $\varphi\Inv(\del B_\delta) \cap U_x$ has a neighborhood $V \subset U_x$ such that
the restriction $\varphi|_{\varphi\Inv(\del B_\delta) \cap V}$
is spherically convex.
\end{itemize}

Then there exist open subsets $W_1,\ldots,W_N$ of $X$
and closed convex cones $C_1,\ldots,C_N$
with vertex at the origin
and there exists $\eps > 0$ such that
\begin{itemize}
\item
The sets $W_1,\ldots,W_N$ are disjoint and their union
is equal to $\varphi\Inv(B_\eps)$.
\item
For each $i$, the image $\varphi(W_i)$ is equal to $C_i \cap B_\eps$.
\item
For each $i$, the map $\varphi|_{W_i} \co W_i \to \varphi(W_i)$
is open and convex.
\item
For each $i$ and each $0 < \delta < \eps$,
the restriction
$\varphi|_{ \varphi\Inv(\del B_\delta) \cap W_i }$
is spherically convex,
and its image is not equal to a pair of antipodal points.
\end{itemize}
\end{Lemma}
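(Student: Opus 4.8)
The plan is to deduce Lemma~\ref{disjoint'} from Lemma~\ref{good U}, applied to each connected component of the compact level set $\varphi\Inv(0)$. First I would observe that $\varphi\Inv(0)$ is compact, since $\varphi$ is proper and $\{0\}$ is compact; and that it is locally connected, because each point $x\in\varphi\Inv(0)$ has a neighborhood $U_x$ on which $\varphi$ is convex, hence has connected level sets, so $\varphi\Inv(0)\cap U_x$ is connected. A compact locally connected space has finitely many connected components, so write $\varphi\Inv(0)=K_1\sqcup\cdots\sqcup K_N$. Since these are compact, disjoint, and $X$ is Hausdorff, I can separate them by disjoint open sets $\calO_1,\dots,\calO_N$ with $K_i\subset\calO_i$; shrinking, I may assume each $\calO_i$ is a union of sets of the form $U_x$, $x\in K_i$, meeting the hypotheses.

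Next, for each $i$ I would apply Lemma~\ref{good U} with the compact connected set $K=K_i$. That lemma produces an open neighborhood $U_{K_i}$ of $K_i$ such that $\varphi|_{U_{K_i}}\co U_{K_i}\to\varphi(U_{K_i})$ is convex and open; moreover, by the two ``Moreover'' bullets of Lemma~\ref{good U}, which apply precisely because the third hypothesis of the present lemma supplies the spherical-convexity condition on $\varphi\Inv(\del B_\delta)\cap U_x$ and the second hypothesis supplies the cones $C_x$, I get in addition: (a) there is a single closed convex cone $C_i$ with vertex at the origin such that $\varphi(U_{K_i})$ is an open subset of $C_i$, and this $C_i$ is a common value of the cones $C_x$, $x\in K_i$ — hence, since each such $C_x$ is not contained in a two-dimensional subspace, neither is $C_i$; and (b) for all sufficiently small $\delta>0$, every point of $\varphi\Inv(\del B_\delta)\cap U_{K_i}$ has a neighborhood $U\subset U_{K_i}$ on which $\varphi|_{\varphi\Inv(\del B_\delta)\cap U}$ is spherically convex. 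By intersecting $U_{K_i}$ with $\calO_i$ I may also assume the $U_{K_i}$ are pairwise disjoint.

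Now I would choose $\eps>0$ small. Since $\varphi$ is proper and $U_{K_1}\cup\cdots\cup U_{K_N}$ is an open neighborhood of the compact set $\varphi\Inv(0)$, there is $\eps>0$ with $\varphi\Inv(B_\eps)\subset U_{K_1}\cup\cdots\cup U_{K_N}$; shrinking $\eps$ so that it is below the thresholds coming from (b) for each $i$, and also below the $\eps$ implicit in ``$\varphi(U_{K_i})$ open in $C_i$'' so that $C_i\cap B_\eps\subset\varphi(U_{K_i})$, I set $W_i:=U_{K_i}\cap\varphi\Inv(B_\eps)$. These are disjoint and their union is $\varphi\Inv(B_\eps)$. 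For the image: $\varphi(W_i)=\varphi(U_{K_i})\cap B_\eps$, and since $\varphi(U_{K_i})$ is an open subset of $C_i$ containing a neighborhood of $0$ in $C_i$ (it contains $0$ as $K_i\subset\varphi\Inv(0)$, and is open in $C_i$), for $\eps$ small this intersection is exactly $C_i\cap B_\eps$. Convexity and openness of $\varphi|_{W_i}$ follow from those of $\varphi|_{U_{K_i}}$ by restriction to the open subset $\varphi\Inv(B_\eps)$ (cf.\ Remark~\ref{restrict open}). The spherical convexity of $\varphi|_{\varphi\Inv(\del B_\delta)\cap W_i}$ for $0<\delta<\eps$ follows from (b) together with the local-to-global passage already packaged inside Lemma~\ref{good U}'s first ``Moreover'' bullet — or, if that bullet only gives the local statement, I would invoke Corollary~\ref{shortening} on the compact connected space $\varphi\Inv(\del B_\delta)\cap W_i$, using that its image $\frac{1}{\delta}(C_i\cap\del B_\delta)$ is not contained in a great circle because $C_i$ is not contained in a two-dimensional subspace; the same fact shows the image is not a pair of antipodal points.

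The main obstacle I anticipate is bookkeeping at the last step: ensuring that the single $\eps$ can be chosen uniformly so that simultaneously $\varphi\Inv(B_\eps)$ lies inside $\bigcup U_{K_i}$, each $C_i\cap B_\eps$ lies inside $\varphi(U_{K_i})$, and $\delta<\eps$ stays below the spherical-convexity threshold for every $i$ — this is just taking a finite minimum, but one must check that the ``sufficiently small $\delta$'' in the hypothesis genuinely descends through Lemma~\ref{good U} to a uniform threshold, and that $\varphi(W_i)$ equals $C_i\cap B_\eps$ on the nose rather than merely an open subset of it. The slightly delicate point is the equality $\varphi(U_{K_i})\cap B_\eps=C_i\cap B_\eps$: it requires knowing that $\varphi(U_{K_i})$, as an open subset of the cone $C_i$, actually contains all of $C_i$ near the vertex, which comes from $0\in\varphi(U_{K_i})$ and openness in $C_i$, but deserves an explicit sentence rather than being left implicit.
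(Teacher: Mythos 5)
Your overall route is the same as the paper's: decompose the compact, locally connected set $\varphi\Inv(0)$ into finitely many components, apply Lemma~\ref{good U} to each to get neighborhoods $U_{K_i}$ and cones $C_i$, and then cut down by $\varphi\Inv(B_\eps)$ for a uniformly small $\eps$. However, there are two genuine gaps. First, the step ``by intersecting $U_{K_i}$ with $\calO_i$ I may assume the $U_{K_i}$ are pairwise disjoint'' does not work as stated: convexity of $\varphi|_{U_{K_i}}$ is a global property of paths in the domain and is not inherited by an arbitrary open subset such as $U_{K_i} \cap \calO_i$ (Remark~\ref{restrict open} covers only openness), and moreover after this shrinking the sets $U_{K_i}\cap\calO_i$ need no longer cover $\varphi\Inv(B_\eps)$ (a point could lie in $U_{K_1}\cap\calO_2$ but in neither intersected set). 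The paper instead keeps the possibly overlapping $U_i$, defines $W_i = U_i \cap \varphi\Inv(B_\eps)$, observes that $\varphi|_{W_i}$ is convex because $B_\eps$ is convex (restriction to the preimage of a convex set, not to a general open set, preserves convexity), hence $W_i$ is connected, hence contained in the single $\calO_i$ it meets; disjointness follows.

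Second, and more seriously, you invoke Corollary~\ref{shortening} on ``the compact connected space $\varphi\Inv(\del B_\delta)\cap W_i$,'' but connectedness of this set is precisely the nontrivial point and cannot be assumed. The paper's proof takes a connected component $Y$ of $\varphi\Inv(\del B_\delta)\cap W_i$ (first checking that $W_i\cap\varphi\Inv(\del B_\delta)$ is compact, being open, closed and hence compact in the compact set $\varphi\Inv(\del B_\delta)$), applies Corollary~\ref{shortening} to $Y$, deduces that $\varphi(Y)$ is open and closed in $\del B_\delta\cap C_i$ and therefore equal to it, and only then concludes that $Y$ is all of $\varphi\Inv(\del B_\delta)\cap W_i$ by using that the level sets of $\varphi|_{W_i}$ are connected. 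Without this argument your final bullet (global spherical convexity of $\varphi|_{\varphi\Inv(\del B_\delta)\cap W_i}$) is not established. The rest of your outline --- the identification $\varphi(W_i)=C_i\cap B_\eps$ via openness of $\varphi(U_{K_i})$ in $C_i$ at the vertex, the uniform choice of $\eps$ by a finite minimum, and the use of non-containment of $C_i$ in a two-dimensional subspace to rule out a great circle and a pair of antipodal points --- matches the paper and is fine.
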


\begin{proof}
Because the level set $\varphi\Inv(0)$ is compact and locally
connected, it has finitely many connected components,
$[x_1],\ldots,[x_N]$.

For each $i=1,\ldots,N$, by applying Lemma~\ref{good U}
to the set $K = [x_i]$, we choose an open subset $U_i$ of $X$
and a closed convex cone $C_i$ with vertex at the origin such that
\begin{itemize}
\item[(a)]
The set $U_i$ contains the component $[x_i]$ of $\varphi\Inv(0)$.
\item[(b)]
The cone $C_i$ is not contained in a two dimensional subspace of $\R^n$.
\item[(c)]
The image $\varphi(U_i)$ is an open subset of $C_i$.
\item[(d)]
The map $\varphi|_{U_i} \co U_i \to \varphi(U_i)$ is open and convex,
and, for sufficiently small $\delta > 0$,
every point in $\varphi\Inv(\del B_\delta) \cap U_i$
has a neighborhood $U \subset U_i$ such that
$\varphi|_{\varphi\Inv(\del B_\delta) \cap U}$ is spherically convex.
\end{itemize}

We proceed in analogy with the proofs of Lemma~\ref{spherical intgoalB}
and of~\cite[Proposition~17]{karshon-marshall}.

Choose disjoint open subsets $\calO_1, \ldots, \calO_N$ of $X$
such that $\calO_i$ contains $[x_i]$ and is contained in $U_i$;
this is possible because $\varphi\Inv(0)$ is compact
and $X$ is Hausdorff.

By properties (c) and (d), there exist $\eps_i > 0$
such that $\varphi(U_i) \cap B_{\eps_i} = C_i \cap B_{\eps_i}$
and such that, if $0 < \delta < \eps_i$,
every point in $\varphi\Inv(\del B_\delta) \cap U_i$
has a neighborhood $U \subset U_i$
such that $\varphi|_{\varphi\Inv(\del B_\delta) \cap U}$
is spherically convex.

Choose $\eps>0$ smaller than $\eps_1,\ldots,\eps_N$
and such that the preimage $\varphi\Inv(B_\eps)$
is contained in $\calO_1 \cup \ldots \cup \calO_N$;
this is possible because $\varphi$ is proper.
Then every connected set that meets $[x_i]$ and is contained
in $\varphi\Inv(B_\eps)$ must be contained in $\calO_i$.

Set $W_i = U_i \cap \varphi\Inv(B_{\eps})$.
Because $\varphi|_{U_i}$ is convex, so is $\varphi|_{W_i}$;
in particular, $W_i$ is connected.
Because $W_i$ meets $[x_i]$ and is contained in $\varphi\Inv(B_\eps)$
it must be contained in $\calO_i$.
So $W_1, \dots, W_N$ are disjoint, their union is $\varphi\Inv(B_{\eps})$,
and $\varphi|_{W_i} \co W_i \to C_i$
is convex and open.

Fix $\delta$ such that $0 < \delta < \eps$.
The sets $W_i \cap \varphi\Inv(\del B_\delta)$
are open in $\varphi\Inv(\del B_\delta)$, disjoint, 
and they cover $\varphi\Inv(\del B_\delta)$.
So each of these sets is closed in $\varphi\Inv(\del B_\delta)$,
hence compact.
Let $Y$ be a connected component of $\varphi\Inv(\del B_\delta) \cap W_i$.
By property (d), every point in $Y$ has a neighborhood $U$
such that $\varphi|_{Y \cap U}$ is spherically convex; 
it is also open as a map to its image, because $\varphi|_{W_i}$ is.
By properties (b) and (c), the image $\varphi(Y)$
is not contained in a great circle of $\del B_\delta$.
By Corollary~\ref{shortening}, it follows that
the map $\varphi|_Y$ itself is spherically convex.

Because $\varphi(Y)$ is open and closed in $\del B_\delta \cap C_i$,
it is equal to $\del B_\delta \cap C_i$.
Because $\varphi|_{W_i}$ has connected level sets,
it follows that the connected component $Y$ is equal to the entire
space $\varphi\Inv(\del B_\delta) \cap W_i$.
Thus, the map $\varphi|_{ \varphi\Inv(\del B_\delta) \cap W_i }$
is spherically convex
and its image is not equal to a pair of antipodal points.
\end{proof}

\begin{Lemma} \labell{X'connected}
Let $X$ be a Hausdorff topological space,
$n \geq 2$, 
and $\varphi \co X \to \R^n$ a continuous map.
Let $\eps > 0$.
Suppose that there exist open subsets $W_1,\ldots,W_N$ of $X$
such that
\begin{itemize}
\item
The sets $W_1,\ldots,W_N$ are disjoint and their union contains
$\varphi\Inv(\ol{B}_\eps)$.
\item
For each $i$, the restriction $\varphi|_{ \varphi\Inv(\del B_\eps) \cap W_i }$
is a spherically convex map,
and its image is not equal to a pair of antipodal points.
\end{itemize}
Suppose that $X$ is path connected.
Then the excised space
$X' := X \ssminus \varphi\Inv(B_\eps)$
is also path connected.
\end{Lemma}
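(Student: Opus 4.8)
The plan is the following. Given two points $a,b\in X'$, I would take an arbitrary path $\gamma\co[0,1]\to X$ from $a$ to $b$ (available since $X$ is path connected) and modify it, on finitely many subintervals, into a path that avoids $\varphi\Inv(B_\eps)$ entirely. The delicate point---and the step I expect to be the main obstacle---is continuity of the modified path: the open set $(\varphi\circ\gamma)\Inv(B_\eps)$ can have infinitely many connected components, so repairing each of them separately risks producing a discontinuous map. I would get around this by repairing $\gamma$ on finitely many larger ``blocks'' at once, reducing each repair to connecting two points inside one of the sets $\varphi\Inv(\del B_\eps)\cap W_i$.

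First I would establish the local ingredient: for each $i$, the set $Y_i:=\varphi\Inv(\del B_\eps)\cap W_i$ is path connected (or empty), and $Y_i\subseteq X'$ because $\del B_\eps$ is disjoint from $B_\eps$. Indeed $\psi:=\varphi|_{Y_i}\co Y_i\to\del B_\eps$ is spherically convex with $\psi(Y_i)$ not a pair of antipodal points; so, given $z_0,z_1\in Y_i$, either $\psi(z_0)\neq-\psi(z_1)$ and spherical convexity directly produces a connecting path, or $\psi(z_0)=-\psi(z_1)$, in which case $\{\psi(z_0),-\psi(z_0)\}$ is a proper subset of $\psi(Y_i)$, so there is $z'\in Y_i$ with $\psi(z')\notin\{\psi(z_0),-\psi(z_0)\}$ and one joins $z_0$ to $z'$ and $z'$ to $z_1$ by spherical convexity.

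Next I would decompose $[0,1]$. Each $t\in[0,1]$ has an open interval $I_t\ni t$ with either $\gamma(I_t)\subseteq\{x\in X:\|\varphi(x)\|>\eps\}$ (when $\|\varphi(\gamma(t))\|>\eps$, by continuity) or $\gamma(I_t)\subseteq W_i$ for some $i$ (when $\|\varphi(\gamma(t))\|\leq\eps$, since then $\gamma(t)\in\varphi\Inv(\ol B_\eps)\subseteq\bigcup_i W_i$ and $\gamma\Inv(W_i)$ is open). Passing to a finite subcover and a Lebesgue number, I would pick $0=\tau_0<\dots<\tau_L=1$ with each $[\tau_{l-1},\tau_l]$ inside some $I_t$; I would call such a subinterval of \emph{type A} if $\gamma$ maps it into $\{\|\varphi\|>\eps\}$ and of \emph{type B} otherwise, noting that a type-B subinterval is then contained in a single $W_i$. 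Merging maximal runs of consecutive subintervals of the same type---consecutive type-B subintervals lie in a common $W_i$ because the $W_i$ are pairwise disjoint---would yield a decomposition of $[0,1]$ into finitely many alternating ``A-blocks'' and ``B-blocks'', each B-block contained in one $W_i$. At every internal block endpoint $\tau$ the adjacent A-block forces $\|\varphi(\gamma(\tau))\|>\eps$, so---together with $\gamma(0)=a$, $\gamma(1)=b\in X'$---every block endpoint lies in $X'$.

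Finally I would perform the surgery. On an A-block I keep $\gamma$ (its image is in $\{\|\varphi\|>\eps\}\subseteq X'$). On a B-block $[\sigma,\sigma']\subseteq W_i$ whose $\gamma$-image already lies in $X'$ I also keep $\gamma$. On a B-block $[\sigma,\sigma']\subseteq W_i$ on which $\|\varphi\circ\gamma\|$ drops below $\eps$, I would set $u=\inf\{t\in[\sigma,\sigma']:\|\varphi(\gamma(t))\|<\eps\}$ and let $v$ be the corresponding supremum; an intermediate-value argument, using that $\gamma(\sigma),\gamma(\sigma')\in X'$, shows $\gamma(u),\gamma(v)\in Y_i$ and that $\gamma$ maps $[\sigma,u]$ and $[v,\sigma']$ into $X'\cap W_i$. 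I would then replace $\gamma|_{[\sigma,\sigma']}$ by the concatenation of $\gamma|_{[\sigma,u]}$, a path in $Y_i$ from $\gamma(u)$ to $\gamma(v)$ (which exists by the first step), and $\gamma|_{[v,\sigma']}$. The result is a finite concatenation of continuous pieces that match at the shared endpoints ($\gamma'(\tau)=\gamma(\tau)$ at each block endpoint $\tau$), hence a continuous path from $a$ to $b$ lying entirely in $X'$; this proves that $X'$ is path connected.
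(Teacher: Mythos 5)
Your proof is correct and follows essentially the same route as the paper's: cover $X$ by the excised-complement set $\{\|\varphi\|>\eps\}$ together with $W_1,\dots,W_N$, chop an arbitrary path into finitely many subintervals each mapping into one of these sets, use disjointness of the $W_i$ to see that the "bad" stretches lie in a single $W_i$ with endpoints on $\varphi\Inv(\del B_\eps)$, and reroute through $\varphi\Inv(\del B_\eps)\cap W_i$, which is path connected by spherical convexity. Your explicit verification that spherical convexity plus "image not a single antipodal pair" yields path connectedness of $\varphi\Inv(\del B_\eps)\cap W_i$ is a detail the paper merely asserts, and is carried out correctly.
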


\begin{proof}
Denote $W_0 := X \ssminus \varphi\Inv(\ol{B}_\eps)$.
Then $W_0,W_1,\ldots,W_N$ is an open covering of $X$.

Let $x_0$ and $x_1$ be two points in $X'$.
Let $\gamma \co [0,1] \to X$ be a path such that
$\gamma(0) = x_0$ and $\gamma(1) = x_1$.
Every $t \in [0,1]$ has a neighborhood $J$ in $[0,1]$
such that $\gamma(J)$ is entirely contained in one of the sets
$W_0,W_1,\ldots,W_N$; we may assume that $J$ is an interval.
Because the interval $[0,1]$ is compact, there exists a partition
$0 = t_0 < t_1 < \ldots < t_M = 1$,
and, for each $1 \leq j \leq M$, 
an integer $i_j \in \{0, \dots, N\}$
such that
the image $\gamma([t_{j-1},t_j])$ is contained in the set $W_{i_j}$.

After possibly passing to a coarser partition of $[0,1]$,
we assume that no two consecutive sets
in the sequence $W_{i_1}, \ldots, W_{i_M}$ are equal.
Also, because any two consecutive sets in this sequence
meet at a division point $\gamma(t_j)$
whereas the sets $W_1, \ldots, W_N$ are disjoint,
of any two consecutive sets in the sequence at least one must be $W_0$.
So, because $W_0$ is contained in $X'$,
the interior division points $\gamma(t_1),\ldots,\gamma(t_{M-1})$
must all be in $X'$.
By assumption, the endpoints $\gamma(t_0) = x_0$ and $\gamma(t_M) = x_1$
are also in $X'$.

We now concentrate on the $j$th subinterval, $[t_{j-1},t_j]$.
If $\gamma(t) \in X'$ for all $t \in [t_{j-1},t_j]$,
then we define $\gamma_j \co [t_{j-1},t_j] \to X' $
to be the restriction $\gamma|_{[t_{j-1},t_j]}$.

Otherwise, let $a$ and $b$ to be the infimum and supremum of the set 
$\{t \in [t_{j-1},t_j] \mid \gamma(t) \not\in X' \}$.
So $[a,b] \subset [t_{j-1},t_j]$,
$\gamma(a) \in \varphi\Inv(\del B_\eps)$,
$\gamma(b) \in \varphi\Inv(\del B_\eps)$,
and $\gamma(t) \in X'$
for $t_{j-1} \leq t \leq a$ and for $b \leq t \leq t_j$.
Because the image $\gamma([t_{j-1},t_j])$ is not contained in $X'$,
it must be contained in one of the sets $W_1,\ldots W_N$,
say, in $W_i$.
Because the restriction $\varphi|_{\varphi\Inv(\del B_\eps) \cap W_i}$
is spherically convex
and its image is not equal to a pair of antipodal points,
the set $\varphi\Inv(\del B_\eps) \cap W_i$ is path connected.
So there exists a path $\tgamma \co [a,b] \to \varphi\Inv(\del B_\eps)$
connecting $\gamma(a)$ and $\gamma(b)$.
Define
$$ \gamma_j(t) = \begin{cases}
 \gamma(t) & t_{j-1} \leq t \leq a \\
 \tgamma(t) & a \leq t \leq b \\
 \gamma(t) & b \leq t \leq t_j .
\end{cases} $$

The concatenation of the paths $\gamma_1, \ldots, \gamma_M$
lies entirely in $X'$ and connects $x_0$ to $x_1$.
\end{proof}

\begin{Lemma} \labell{localopen'}
Let $W$ be a Hausdorff topological space.
Let $C$ be a subset of $\R^n$ such that $\Rplus \cdot C = C$.
Let $\varphi \co W \to C$ be a continuous open map.
Define maps
$ \Psi \co W \times \Rplus \to C $
and
$ \Psibar \co (W\ssminus \varphi\Inv(0)) \times \Rplus \to S^{n-1} \cap C $
by
$ \Psi(x,\lambda) = \lambda \varphi(x) $
and
$ \Psibar = \Psi / \| \Psi \| $.

Let $0 < \eps < \eps'$ be positive numbers.
Suppose that the image of $\varphi$ contains $B_{\eps'} \cap C$.
Let
$$ W' = W \ssminus \varphi\Inv(B_\eps).$$
Then
\begin{itemize}
\item
The map $\Psi|_{W' \times \Rplus} \co W' \times \Rplus \to C$
is open and its image is $C \ssminus \{ 0 \}$.
\item
The map $\Psibar|_{W' \times \Rplus} \co W' \times \Rplus
\to S^{n-1} \cap C$ is open.
\end{itemize}
\end{Lemma}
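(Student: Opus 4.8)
The plan is to identify the image of $\Psi|_{W'\times\Rplus}$, then prove directly that this restriction is open as a map to $C$ by a rescaling trick, and finally obtain openness of $\Psibar|_{W'\times\Rplus}$ for free from Lemma~\ref{to cone}. (Note that $W'\subseteq W\ssminus\varphi\Inv(0)$, since $0\in B_\eps$, so $\Psibar|_{W'\times\Rplus}$ is indeed defined.)

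\emph{The image.} Since $\varphi$ takes values in $C$, we have $W'=\varphi\Inv(C\ssminus B_\eps)$, so $\|\varphi(x)\|\geq\eps>0$ for every $x\in W'$; hence $\Psi(x,\lambda)=\lambda\varphi(x)$ is a nonzero point of $C$ for all $(x,\lambda)\in W'\times\Rplus$. Conversely, given $c\in C\ssminus\{0\}$, the point $c':=(\eps/\|c\|)\,c$, which lies in $B_{\eps'}\cap C$ because $C$ is a cone and $\eps<\eps'$, belongs to the image of $\varphi$; if $c'=\varphi(x)$, then $\|\varphi(x)\|=\eps$ forces $x\in W'$, and $c=\Psi(x,\|c\|/\eps)$. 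So the image of $\Psi|_{W'\times\Rplus}$ is exactly $C\ssminus\{0\}$.

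\emph{Openness of $\Psi|_{W'\times\Rplus}$.} Fix an open set $O\subseteq W'\times\Rplus$ and a point $(x_0,\lambda_0)\in O$, and set $c_0:=\Psi(x_0,\lambda_0)$, a nonzero point of $C$. Choose an open $V\subseteq W$ with $x_0\in V$ and a number $\delta\in(0,\lambda_0)$ with $(V\cap W')\times(\lambda_0-\delta,\lambda_0+\delta)\subseteq O$. The elementary identity $\varphi(V\cap\varphi\Inv(A))=\varphi(V)\cap A$, applied with $A=C\ssminus B_\eps$, gives $\varphi(V\cap W')=\varphi(V)\cap(C\ssminus B_\eps)$, and $\varphi(V)$ is open in $C$ because $\varphi$ is open. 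For $c\in C\ssminus\{0\}$ put $\lambda_c:=\|c\|/\|\varphi(x_0)\|$: this is a continuous, positive function of $c$ with $\lambda_{c_0}=\lambda_0$, while $c/\lambda_c=(\|\varphi(x_0)\|/\|c\|)\,c$ is continuous in $c$, equals $\varphi(x_0)$ at $c=c_0$, and satisfies $\|c/\lambda_c\|=\|\varphi(x_0)\|\geq\eps$, so $c/\lambda_c\in C\ssminus B_\eps$ for every $c$. By continuity there is a neighborhood $N$ of $c_0$ in $C$ such that for all $c\in N$ both $c/\lambda_c\in\varphi(V)$ and $\lambda_c\in(\lambda_0-\delta,\lambda_0+\delta)$; then $c/\lambda_c\in\varphi(V)\cap(C\ssminus B_\eps)=\varphi(V\cap W')$, so $c/\lambda_c=\varphi(x)$ for some $x\in V\cap W'$, whence $\Psi(x,\lambda_c)=\lambda_c\varphi(x)=c$ and $(x,\lambda_c)\in(V\cap W')\times(\lambda_0-\delta,\lambda_0+\delta)\subseteq O$. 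Thus $N\subseteq\Psi(O)$, so $\Psi(O)$ is open in $C$; as $(x_0,\lambda_0)$ was arbitrary, $\Psi|_{W'\times\Rplus}\co W'\times\Rplus\to C$ is open.

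\emph{Openness of $\Psibar|_{W'\times\Rplus}$, and the main difficulty.} Since $\Psi|_{W'\times\Rplus}\co W'\times\Rplus\to C$ is continuous, nowhere zero, and open, and $\Rplus\cdot C=C$, Lemma~\ref{to cone} shows that $\Psibar|_{W'\times\Rplus}=\Psi|_{W'\times\Rplus}/\|\Psi|_{W'\times\Rplus}\|\co W'\times\Rplus\to S^{n-1}\cap C$ is open. The only genuinely delicate step is the openness of $\Psi|_{W'\times\Rplus}$ at a point $x_0$ with $\|\varphi(x_0)\|=\eps$: there $\varphi$ restricted to the closed set $W'$ is no longer open near $x_0$, so one cannot merely restrict the open map $\varphi$. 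The rescaling $\lambda_c=\|c\|/\|\varphi(x_0)\|$ is precisely the device that pulls every nearby target point back onto the radius-$\|\varphi(x_0)\|$ slice of $C$, where $\varphi$ is still open to its image; everything else is bookkeeping.
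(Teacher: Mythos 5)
Your proof is correct, and its overall skeleton matches the paper's: first compute the image $C\ssminus\{0\}$ using the cone property and the hypothesis $B_{\eps'}\cap C\subset\varphi(W)$, then establish openness of $\Psi|_{W'\times\Rplus}$ as a map to $C$, and finally deduce openness of $\Psibar|_{W'\times\Rplus}$ by composing with the radial projection (your appeal to Lemma~\ref{to cone} is exactly the paper's closing step, just packaged as a citation). The one place where you genuinely diverge is the openness of $\Psi$ on the basis sets $(V\cap W')\times I$. The paper argues pointwise on the image and splits into two cases according to whether the point comes from $y'$ with $\varphi(y')\in\del B_\eps$ or not, producing in each case an explicit open neighborhood of the form $I\cdot\bigl(\varphi(Y)\ssminus\ol{B}_\eps\bigr)$ or $I\cdot\bigl(\varphi(Y)\cap\del B_\eps\bigr)$; this requires the auxiliary observation that such products of an interval with an open subset of $C$ (or of $C\cap\del B_\eps$) are open in $C$. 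Your rescaling $c\mapsto c/\lambda_c$ with $\lambda_c=\|c\|/\|\varphi(x_0)\|$ pulls every nearby target onto the sphere of radius $\|\varphi(x_0)\|\geq\eps$, which lies entirely outside $B_\eps$, so the interior and boundary cases are handled uniformly and the auxiliary openness fact is never needed. Both arguments rest on the same two ingredients (openness of $\varphi$ as a map to $C$ and homogeneity of $C$); yours trades the paper's case analysis for a slightly slicker continuity argument.
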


Note that, above, $C$ is not necessarily closed.

\begin{proof}
We begin with three consequences of the condition $\Rplus \cdot C = C$.

First, we identify the image of the map $\Psi|_{W' \times \Rplus}$.
This image is equal to
 $\Rplus \cdot \left( \varphi(W) \ssminus B_\eps \right)$,
which is contained in the set $\Rplus \cdot \left( C \ssminus B_\eps \right)$
and contains the set
 $\Rplus \cdot \left( C \cap ( B_{\eps'} \ssminus B_\eps ) \right)$;
both of these sets are equal to $C \ssminus \{ 0 \}$.
So the image of $\Psi|_{W' \times \Rplus}$ is $C \ssminus \{ 0 \}$ as claimed.

Next, we note that if $I$ is an open interval contained in $\Rplus$
and $\calO$ is open in $C$, then the sets
$I \cdot \calO$ and $I \cdot (\calO \cap \del B_\eps)$
are open in $C$.

Finally, suppose that
$\Psi \co W' \times \Rplus \to C $ is open.
Then $\Psi$ is also open as a map to $C \ssminus \{ 0 \}$.
Since the central projection $\pi\co \R^n \ssminus \{0\} \to S^{n-1}$ 
given by $\pi(y)=y/\|y\|$ is open,
and since $\pi\Inv(S^{n-1} \cap C) = C \ssminus \{0\}$,
it follows that 
$\Psibar = \pi \circ \Psi \co W' \times \Rplus \to S^{n-1} \cap C$ 
is also open, as a composition of two open maps.

It remains to show that the map $\Psi \co W' \times \Rplus \to C$ is open.

The sets of the form $Y' \times I$, where $I$ is an open interval contained in $\Rplus$,
and where $Y$ is an open subset of $W$ and
$Y' = Y \ssminus \varphi\Inv(B_\eps)$,
form a basis to the topology of $W' \times \Rplus$.
So we need to show that for every such set the image
$$ \Psi(Y' \times I) = I \cdot \left( \varphi(Y) \ssminus B_\eps \right) $$
is open in $C$.
Let $\mu$ be a point in $I \cdot (\varphi(Y) \ssminus B_\eps )$,
say, $\mu = \lambda \cdot \varphi(y')$
with $y' \in Y \ssminus \varphi\Inv(B_\eps)$
and $\lambda \in I$.

If $y'$ is actually in $Y \ssminus \varphi\Inv(\ol{B}_\eps)$,
then $\varphi(Y) \ssminus \ol{B}_\eps$ is an open neighborhood
of $\varphi(y')$ in $C$, because $\varphi(Y)$ is open in $C$.
It follows that
$I \cdot \left( \varphi(Y) \ssminus \ol{B}_\eps \right)$
is an open neighborhood of $\lambda \cdot \varphi(y')$ in $C$.

Now suppose that $y' \in \varphi\Inv(\del B_\eps)$.
Because $\varphi(Y)$ is open in $C$,
its intersection with $\del B_\eps$
is open in $C \cap \del B_\eps$.
It follows that the set $I \cdot \left( \varphi(Y) \cap \del B_\eps \right)$
is an open neighborhood of $\lambda \cdot \varphi(y')$ in $C$.

In either case, we found an open neighborhood of $\mu$ in $C$
that is contained in $I \cdot \left( \varphi(Y) \ssminus B_\eps \right)$.
This completes the proof of the lemma.
\end{proof}

\begin{Lemma} \labell{localconvex'}
Let $W$ be a Hausdorff topological space,
let $C$ be a closed convex cone in $\R^n$ with vertex at the origin,
and let $\varphi \co W \to C$ be a continuous map.
Define maps
$ \Psi \co W \times \Rplus \to C  $
and $ \Psibar \co
      (W\ssminus \varphi\Inv(0)) \times \Rplus \to S^{n-1} \cap C $
by
$ \Psi(x,\lambda) = \lambda \varphi(x) $
and
$ \Psibar = \Psi / \| \Psi \| $.
Let $0 < \eps < \eps'$ be positive numbers.
Let
$$ W' = W \ssminus \varphi\Inv(B_\eps).$$

Assume that
\begin{itemize}
\item
$\varphi(W) = B_{\eps'} \cap C$.
\item
The map $\varphi$ is convex and open to its image.
\item
The map $\varphi|_{\varphi\Inv(\del B_\eps)}$ is spherically convex.
\end{itemize}

Then every point in $W' \times \Rplus$
has a neighborhood $U'$ in $W' \times \Rplus$
such that the map $\Psi|_{U'}$ is convex
and the map $\Psibar|_{U'}$ is open to its image.
\end{Lemma}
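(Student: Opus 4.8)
The plan is to handle the two conclusions separately, the second being the substantial one. Openness of $\Psibar$ on small neighbourhoods is essentially immediate: since $\varphi(W)=B_{\eps'}\cap C$ is open in $C$, the map $\varphi\co W\to C$ is open, so Lemma~\ref{localopen'} (with the same $C$, $\eps$, $\eps'$) shows that $\Psibar|_{W'\times\Rplus}\co W'\times\Rplus\to S^{n-1}\cap C$ is open, and by Remark~\ref{restrict open} its restriction to any open subset of $W'\times\Rplus$ is open as a map to its image. Thus it remains, given a point $(x_0,\lambda_0)\in W'\times\Rplus$, to produce an open neighbourhood $U'$ of it in $W'\times\Rplus$ on which $\Psi$ is convex.

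The key idea is to take $U'=V'\times\Rplus$ where $V'$ is not the $\varphi$-preimage of a ball around $\varphi(x_0)$ but a \emph{conical} neighbourhood of $x_0$, chosen so that $\Psi(V'\times\Rplus)$ is the complement of the origin in a convex cone. Since $x_0\in W'$ we have $\|\varphi(x_0)\|\geq\eps>0$; put $w_0=\eps\,\varphi(x_0)/\|\varphi(x_0)\|\in\del B_\eps\cap C$. For $\rho>0$ small the cap $B(w_0,\rho)\cap\del B_\eps$ lies in an open hemisphere, hence is spherically convex; since $\varphi$ takes values in $C$ we have $\varphi\Inv(B(w_0,\rho)\cap\del B_\eps)=\varphi\Inv(\mathcal D)$ for $\mathcal D:=\del B_\eps\cap C\cap B(w_0,\rho)$, and Remark~\ref{restrict}, applied to the spherically convex map $\varphi|_{\varphi\Inv(\del B_\eps)}$, shows $\varphi|_{\varphi\Inv(\mathcal D)}\co\varphi\Inv(\mathcal D)\to\del B_\eps$ is spherically convex; it has image $\mathcal D$ because $\mathcal D\subseteq\varphi(W)$. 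Set $\calO':=(\Rplus\cdot\mathcal D)\ssminus B_\eps$ and $V':=\varphi\Inv(\calO')$. In the polar identification $C\ssminus B_\eps\cong[\eps,\infty)\times(\del B_\eps\cap C)$ the set $\calO'$ corresponds to $[\eps,\infty)\times\mathcal D$, which is open there; since $\varphi|_{W'}$ maps into $C\ssminus B_\eps$ it follows that $V'$ is open in $W'$, and clearly $\varphi(x_0)\in\calO'$, so $x_0\in V'$.

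Convexity of $\Psi|_{V'\times\Rplus}$ would then be assembled from two facts. First, $\Psi$ is convex on $\varphi\Inv(\mathcal D)\times\Rplus$: for $(z_1,t_1),(z_2,t_2)$ in it, $\varphi(z_1)$ and $\varphi(z_2)$ lie in the hemispherical set $\mathcal D$, hence are not antipodal, so spherical convexity of $\varphi|_{\varphi\Inv(\mathcal D)}$ gives a path $\sigma$ in $\varphi\Inv(\mathcal D)$ with $\varphi\circ\sigma$ a weakly monotone short geodesic; applying Lemma~\ref{straight2} on $S^{n-1}$ to the rescaled geodesic $(\varphi\circ\sigma)/\eps$ and the radii $t_1\eps$, $t_2\eps$ yields a weakly monotone straight path $\gammabar$ with $\gammabar(0)=t_1\varphi(z_1)$, $\gammabar(1)=t_2\varphi(z_2)$, and $\gammabar/\|\gammabar\|=(\varphi\circ\sigma)/\eps$; then $t\mapsto(\sigma(t),\|\gammabar(t)\|/\eps)$ is a path in $\varphi\Inv(\mathcal D)\times\Rplus$ from $(z_1,t_1)$ to $(z_2,t_2)$ with $\Psi$-composition $\gammabar$. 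Second, every $(x,\lambda)\in V'\times\Rplus$ can be joined, inside $V'\times\Rplus$ and along a level set of $\Psi$, to a point of $\varphi\Inv(\mathcal D)\times\Rplus$: the set $\Psi\Inv(\lambda\varphi(x))\cap(V'\times\Rplus)$ is the graph over $\varphi\Inv(\Rplus\varphi(x)\cap\calO')$ of $x'\mapsto(x',\lambda\|\varphi(x)\|/\|\varphi(x')\|)$, and $\Rplus\varphi(x)\cap\calO'$ is a half-line (a convex subset of a ray through the origin), so its $\varphi$-preimage is path connected since $\varphi$ is convex, and this preimage contains $x$ and also any $\varphi$-preimage of $\eps\varphi(x)/\|\varphi(x)\|\in\mathcal D$. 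Concatenating a constant-$\Psi$ path from $(x_1,\lambda_1)$ into $\varphi\Inv(\mathcal D)\times\Rplus$, a weakly monotone straight path there, and a constant-$\Psi$ path out to $(x_2,\lambda_2)$, and noting that a constant path prepended or appended at an endpoint of a weakly monotone straight path keeps it weakly monotone straight, gives the required path; hence $\Psi|_{V'\times\Rplus}$ is convex.

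The step I expect to be the real obstacle is precisely the choice of $V'$. The obvious choice $\varphi\Inv(B(\varphi(x_0),\rho))\cap W'$ fails: after conifying and excising $B_\eps$, the image $\Psi(V'\times\Rplus)$ can cease to be convex near the ``inner corner'' along $\del B_\eps$, so for some pairs of points the segment joining their images leaves the image and no weakly monotone straight lift exists. Taking instead the cone over a small spherically convex cap $\mathcal D$ repairs this, since the cone over a spherically convex set is a convex cone and therefore contains the segment between any two of its points that misses the origin; the cost is the mildly delicate check that $V'$ is still open in $W'$, which is why $\calO'$ is an unbounded conical slab $[\eps,\infty)\times\mathcal D$ rather than a bounded one. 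The remaining ingredients — the elementary properties of $\mathcal D$, the bookkeeping in the application of Lemma~\ref{straight2} (including the rescaling to the unit sphere), the path connectedness of $\varphi$-preimages of convex sets, and the concatenation of paths — are routine.
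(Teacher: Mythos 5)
Your proof is correct, and for the substantive part (local convexity of $\Psi$) it takes a genuinely different route from the paper. The paper covers $W'\times\Rplus$ by the sets $\left(\varphi\Inv(H)\ssminus\varphi\Inv(B_\eps)\right)\times\Rplus$ over open half--spaces $H$ through the origin; on such a set it first produces a path in the \emph{unexcised} set $\varphi\Inv(H)\times\Rplus$ by conifying the convex map $\varphi|_{\varphi\Inv(H)}$ via Lemma~\ref{slide'}, and then, if that path dips into $B_\eps$, performs surgery: the offending portion is replaced by a path in $\varphi\Inv(\del B_\eps\cap H)$ supplied by spherical convexity, one checks that the spherical projection of the modified path is still a single weakly monotone short geodesic (because it agrees with the original at the junction times), and Lemma~\ref{straight2} re--straightens it. You instead cover by conical neighbourhoods $\varphi\Inv\bigl((\Rplus\cdot\mathcal D)\ssminus B_\eps\bigr)\times\Rplus$ over small spherically convex caps $\mathcal D\subset\del B_\eps\cap C$, retract both endpoints radially along $\Psi$--level sets onto $\varphi\Inv(\mathcal D)\times\Rplus$ (using convexity of $\varphi$ on the preimage of a ray segment), connect them there via spherical convexity of $\varphi|_{\varphi\Inv(\mathcal D)}$ and Lemma~\ref{straight2}, and concatenate. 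Both arguments rest on the same two hypotheses --- convexity of $\varphi$ and spherical convexity of $\varphi|_{\varphi\Inv(\del B_\eps)}$ --- but you avoid Lemma~\ref{slide'} and the path--surgery/rejoining check, at the cost of leaning harder on the spherical convexity (you use it to connect arbitrary pairs of points over the cap, not just to repair a dip) and of the openness verification for the conical neighbourhood, which you carry out correctly. Your treatment of the openness of $\Psibar$ (one global application of Lemma~\ref{localopen'} followed by Remark~\ref{restrict open}) is also fine and slightly more economical than the paper's per--half--space application. I found no gaps: the rescaling bookkeeping in Lemma~\ref{straight2}, the constancy of $\Psi$ along the radial retraction, and the fact that prepending and appending constant pieces preserves weak monotone straightness all check out.
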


\begin{proof}
Let $H$ be an open half--space in $\R^n$ whose boundary contains the origin,
and let
$$ U' = \left(
   \varphi\Inv(H) \ssminus \varphi\Inv(B_\eps) \right) \times \Rplus .$$
We will prove that the map $\Psi|_{U'}$ is convex
and that the map $\Psibar|_{U'}$ is open to its image.
This will be enough, because the sets $U'$, for different choices
of $H$, form an open covering of $W' \times \Rplus$.

By Lemma~\ref{localopen'}
applied to $\varphi\co \varphi\Inv(H) \to H \cap C$,
the map $\Psibar|_{U'}$ is open as a map to its image.
It remains to show that the map $\Psi|_{U'}$ is convex.
This will overlap the proof of Lemma~\ref{X'connected},
in that we will connect two points of $W'$ by a path
that lies entirely in $W'$, but here we must take care
to obtain a path whose composition with $\varphi$
can be ``straightened" by multiplication by a positive function.

Let
$$ U = \varphi\Inv(H) \times \Rplus .$$
Because $\varphi$ is a convex map and $H$ is a convex set,
the restriction $\varphi|_{\varphi\Inv(H)}$ is a convex map.
It follows by Lemma~\ref{slide'} that $\Psi|_U$ is a convex map.

Let $y_0 = (x_0, \lambda_0)$ and $y_1 = (x_1, \lambda_1)$
be two points in $U'$.
The segment $[\Psi(y_0), \Psi(y_1)]$
is contained in $H$, so it does not pass through the origin.
Because $\Psi|_U$ is convex,
there exists a path $y(t) = (x(t), \lambda(t))$ in $U$
connecting $y_0$ and $y_1$ with $\Psi \circ y$ weakly monotone straight.
By Lemma~\ref{straight}, the path $\Psibar(y(\cdot))$
is a weakly monotone short geodesic.

Let
$$ \psi := \varphi / \| \varphi \|
   \co W \ssminus \varphi\Inv(0)  \to S^{n-1} .$$
Then
$$\Psibar (x,\lambda) = \psi(x),$$
and in particular $\Psibar(y(t)) = \psi(x(t))$.
So $\psi(x(\cdot))$ is a weakly monotone short geodesic.

Since $x_0$ and $x_1$ are in $W'$,
the path $\varphi(x(\cdot))$ starts and ends outside $B_\eps$.
If the path $\varphi(x(\cdot))$ happens to lie entirely outside $B_\eps$,
then the path $y(\cdot)$ lies entirely in $U'$, and we are done.
Otherwise, let $[a,b] \subset [0,1]$ be such that
$\varphi(x(a)) \in \del B_\eps$, $\varphi(x(b)) \in \del B_\eps$,
and $\varphi(x(t)) \not\in B_\eps$
for $0 \leq t \leq a$ and for $b \leq t \leq 1$.
For instance, we may take $a$ and $b$ to be the infimum and supremum
of the set of times $t$ at which $\varphi(x(t))$
is in the ball $B_\eps$.

Because
$\varphi|_{\varphi\Inv(\del B_\eps \cap H)}$
is spherically convex (cf.~Remark~\ref{restrict})
and its image is not equal to a pair of antipodal points,
there exists a path $\tilde{x}(t)$, for $a \leq t \leq b$,
connecting $x(a)$ and $x(b)$
and lying in $\varphi\Inv(\del B_\eps \cap H)$,
such that $\varphi(\tilde{x}(t))$ is a weakly monotone
short geodesic.
Necessarily, $\tilde{x}(t)$ is in $W'$.
We define $\tilde{x}(t)$ to be equal to $x(t)$
on the segments $[0,a]$ and $[b,1]$.
Then $\psi(\tilde{x}(t))$ is a path in $S^{n-1}$
whose restriction to each of the segments $[0,a]$, $[a,b]$, and $[b,1]$
is a weakly monotone short geodesic.
But the value of this path at the points $0$, $a$, $b$, $1$
coincide with the values of $\psi(x(t))$, which is a
weakly monotone short geodesic on the entire segment $[0,1]$.
It follows that $\psi(\tilde{x}(t))$ is also a weakly monotone
short geodesic on the entire segment $[0,1]$.

Let $\lambda_0' = \| \Psi(y_0) \|$
and $\lambda_1' = \| \Psi(y_1) \|$.
Then $\Psi(y_0) = \lambda_0' \psi(\tilde{x}(0))$
and $\Psi(y_1) = \lambda_1' \psi(\tilde{x}(1))$.
Lemma~\ref{straight2} implies that there exists
a weakly monotone straight path $\gammabar \co [0,1] \to \R^n$
from $\Psi(y_0)$ to $\Psi(y_1)$
such that $\gammabar/ \| \gammabar \| = \psi (\tilde{x}(\cdot))$.
In particular, $\gammabar(t)$ is a positive multiple
of $\varphi(\tilde{x}(t))$.
So there exists a continuous function $\tlambda \co [0,1] \to \Rplus$
such that $\gammabar(t) = \tlambda(t) \varphi(\tilde{x}(t))$;
namely,
$\tlambda(t) = \| \gammabar(t) \| / \| \varphi(\tilde{x}(t) ) \| $.
So $\tilde{y}(\cdot) = \bigl( \tilde{x}(\cdot) , \tlambda(\cdot) \bigr)$
is a path in $U'$ from $y_0$ to $y_1$ such that $\Psi \circ y$
is weakly monotone straight.
\end{proof}

\begin{Lemma} \labell{A}
Let $X$ be a compact connected Hausdorff topological space
and $\varphi \co X \to \R^n$ a continuous map.
Define maps $\Psi \co X \times \Rplus \to \R^n$
and $\Psibar \co (X \ssminus \varphi\Inv(0)) \times \Rplus \to S^{n-1}$
by $\Psi(x,\lambda) = \lambda \varphi(x)$
and $\Psibar = \Psi / \| \Psi \|$.
Assume that
\begin{itemize}
\item
The image $\varphi(X)$ contains $0$.
\item
For every point $x$ of $X$ with $\varphi(x) = 0$
there exists an open neighborhood $U_x$ of $x$ in $X$
and a closed convex cone $C_x$ in $\R^n$
with vertex at the origin such that
\begin{itemize}
\item[\dash]
The cone $C_x$ is not contained in a two dimensional subspace of $\R^n$.
\item[\dash]
The image $\varphi(U_x)$ is an open subset of $C_x$.
\item[\dash]
The map $\varphi|_{U_x} \co U_x \to \varphi(U_x)$ is convex and open,
and, for sufficiently small $\delta>0$, 
every point in $\varphi\Inv(\del B_\delta) \cap U_x$ 
has a neighborhood $V\subset U_x$ such that the restriction
$\varphi|_{\varphi\Inv(\del B_\delta) \cap V}$ 
is spherically convex.
\end{itemize}
\item
For every point $(x,\lambda)$ of $X \times \Rplus$ with $\varphi(x) \neq 0$,
every neighborhood of $(x,\lambda)$ in $X \times \Rplus$
contains a smaller neighborhood, $U$,
such that the map $\Psi|_U$ is convex
and such that the map $\Psibar|_U$ is open as a map to its image.
\end{itemize}
Then, for every sufficiently small positive number $\eps$,
the following results hold.  Let
\begin{equation} \labell{X'}
 X' = X \ssminus \varphi\Inv(B_\eps) .
\end{equation}
Then
\begin{enumerate}
\item
For every two points $y_0$ and $y_1$ in $X' \times \Rplus$,
if the segment $[\Psi(y_0),\Psi(y_1)]$
does not contain the origin,
then there exists a path $\gamma \co [0,1] \to X' \times \Rplus$
such that $\gamma(0) = y_0$, $\gamma(1) = y_1$,
and $\Psi \circ \gamma$ is weakly monotone straight.
\item
The map $\Psi \co X' \times \Rplus \to \Psi(X' \times \Rplus)$
is open as a map to its image.
\item
The image $\Psi(X' \times \Rplus)$ is equal
to $\Psi(X \times \Rplus) \ssminus \{ 0 \}$.
\end{enumerate}
\end{Lemma}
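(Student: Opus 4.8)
The plan is to reduce the whole statement, by excising a neighborhood of $\varphi\Inv(0)$, to Proposition~\ref{stretch} applied to the \emph{nonvanishing} restriction $\varphi|_{X'}$. First I would invoke Lemma~\ref{disjoint'}, whose hypotheses are among ours (compactness of $X$ makes $\varphi$ proper, and the cone assumptions coincide). It provides disjoint open sets $W_1,\dots,W_N$, closed convex cones $C_1,\dots,C_N$ with vertex at the origin and none contained in a two dimensional subspace, and a radius that I shall call $\eps'$, such that $\bigcup_i W_i=\varphi\Inv(B_{\eps'})$, $\varphi(W_i)=C_i\cap B_{\eps'}$, each $\varphi|_{W_i}$ is convex and open to its image, and, for $0<\delta<\eps'$, each $\varphi|_{\varphi\Inv(\del B_\delta)\cap W_i}$ is spherically convex with image not a pair of antipodal points (here $N\geq 1$ since $0\in\varphi(X)$, and $\varphi\Inv(0)$ is compact and locally connected). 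I then fix any $\eps$ with $0<\eps<\eps'/2$ --- this is the meaning of ``sufficiently small'' --- and put $X'=X\ssminus\varphi\Inv(B_\eps)$ as in~\eqref{X'}.

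Next I would verify that $\varphi|_{X'}\co X'\to\R^n$ meets the hypotheses of Proposition~\ref{stretch}. It is nonvanishing, since $\varphi\Inv(0)\subset\varphi\Inv(B_\eps)$, and $X'$ is compact, being closed in $X$. For the local hypothesis, take $(x,\lambda)\in X'\times\Rplus$, so $\|\varphi(x)\|\geq\eps$. If $\|\varphi(x)\|\geq\eps'$, then $\varphi(x)\neq 0$, the point $x$ has a neighborhood $V$ in $X$ with $\|\varphi\|>\eps$ on $V$, so $V\times\Rplus\subset X'\times\Rplus$, and the third hypothesis of the present lemma provides a smaller neighborhood $U\subset V\times\Rplus$ on which $\Psi$ is convex and $\Psibar$ is open to its image; this $U$ is a neighborhood of $(x,\lambda)$ in $X'\times\Rplus$. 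If instead $\eps\leq\|\varphi(x)\|<\eps'$, then $x$ lies in a unique $W_i$, hence in $W_i':=W_i\cap X'=W_i\ssminus\varphi\Inv(B_\eps)$, which is open in $X'$; applying Lemma~\ref{localconvex'} with $W=W_i$, $C=C_i$, and radii $\eps<\eps'$ --- its three hypotheses are exactly the listed properties of $W_i$ --- yields a neighborhood of $(x,\lambda)$ in $W_i'\times\Rplus$, hence in $X'\times\Rplus$, on which $\Psi$ is convex and $\Psibar$ open to its image. These two cases cover $X'\times\Rplus$.

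Finally I would dispatch the remaining global hypothesis and conclusion~(3). The space $X'$ is connected by Lemma~\ref{X'connected}, applied with radius $\eps$ (using $\varphi\Inv(\ol{B}_\eps)\subset\bigcup_i W_i$ and the spherical convexity with non-antipodal image of $\varphi|_{\varphi\Inv(\del B_\eps)\cap W_i}$; the hypothesis there that $X$ be path connected holds since the local convexity data make $X$ locally path connected). The image $\varphi(X')$ is not contained in a two dimensional subspace, since it contains $\varphi(W_i')=C_i\cap(B_{\eps'}\ssminus B_\eps)$, which spans the same subspace as $C_i$. Proposition~\ref{stretch} then gives (1) and (2) at once. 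For~(3): since $0\in\varphi(X)$ one has $\Psi(X\times\Rplus)=\R_{\geq 0}\cdot\varphi(X)$, and $\Psi(X'\times\Rplus)=\Rplus\cdot\varphi(X')$ avoids $0$ (as $\varphi|_{X'}$ is nonvanishing) and is contained in $\Psi(X\times\Rplus)\ssminus\{0\}$; conversely, for a nonzero $\mu=\lambda\varphi(x)\in\Psi(X\times\Rplus)$ with $x\notin X'$ we have $0\neq\varphi(x)\in B_\eps$, so $x\in W_i$ for some $i$, the vector $\tfrac{\eps'}{2\|\varphi(x)\|}\varphi(x)$ lies in $C_i\cap B_{\eps'}=\varphi(W_i)$ and hence equals $\varphi(x')$ for some $x'\in W_i$ with $\|\varphi(x')\|=\eps'/2>\eps$, so $x'\in X'$ and $\mu$ is a positive multiple of $\varphi(x')$, giving $\mu\in\Psi(X'\times\Rplus)$.

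The step I expect to be the main obstacle is the behavior near the excision sphere $\varphi\Inv(\del B_\eps)$: at a point of $X'$ where $\|\varphi\|$ is close to $\eps$, every neighborhood in $X$ meets the excised set $\varphi\Inv(B_\eps)$, so the ``away from zero'' hypothesis cannot on its own furnish a neighborhood inside $X'\times\Rplus$ on which $\Psi$ is convex and $\Psibar$ open. Handling this is precisely the role of Lemma~\ref{localconvex'}, and the reason to excise at a radius $\eps$ strictly smaller than the radius $\eps'$ supplied by Lemma~\ref{disjoint'}: the sets $W_i$ then genuinely surround the sphere $\del B_\eps$, so each $W_i'\times\Rplus$ is open in $X'\times\Rplus$ and the hypotheses of Lemma~\ref{localconvex'} are available there. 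A secondary technical point is the connectedness of the excised space $X'$, which is provided by Lemma~\ref{X'connected}.
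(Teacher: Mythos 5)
Your proposal is correct and follows essentially the same route as the paper's own proof: excision via Lemma~\ref{disjoint'}, path connectedness of $X'$ via Lemma~\ref{X'connected}, local convexity and openness near the excision sphere via Lemma~\ref{localconvex'} and away from it via the third hypothesis, and then Proposition~\ref{stretch} for conclusions (1) and (2). The only cosmetic differences are that you take $\eps < \eps'/2$ and prove item (3) by a direct scaling argument inside the cones $C_i$, where the paper keeps $\eps<\eps'$ and cites Lemma~\ref{localopen'} instead.
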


\begin{proof}
The space $X$ and the map $\varphi$ satisfy the assumptions
of Lemma~\ref{disjoint'}.  
Let $W_1,\ldots,W_N$ be open subsets of $X$,
let $C_1,\ldots,C_N$ be closed convex cones with vertex at the origin,
and let $\eps'$ be a positive number, such that
\begin{itemize}
\item
The sets $W_1,\ldots,W_N$ are disjoint and their union
is equal to $\varphi\Inv(B_{\eps'})$.
\item
For each $i$, the image $\varphi(W_i)$ is equal to $C_i \cap B_{\eps'}$.
\item
For each $i$, the map $\varphi|_{W_i} \co W_i \to \varphi(W_i)$
is open and convex, and, for each $0 < \eps < \eps'$,
the restriction $ \varphi|_{\varphi\Inv(\del B_\eps) \cap W_i } $
is spherically convex
and its image is not equal to a pair of antipodal points. 
\end{itemize}

Let $\eps$ be any positive number such that $0 < \eps < \eps'$,
and let $X' = X \ssminus \varphi\Inv(B_\eps)$.

Because $X$ is connected and locally path connected, $X$ is path connected.
The space $X$, the map $\varphi$, and the number $\eps$ satisfy
the assumptions of Lemma~\ref{X'connected}.
Thus, the excised space $X'$ is path connected.

For each $1 \leq i \leq N$, the set $W_i$, the cone $C_i$,
the map $\varphi|_{W_i} \co W_i \to C_i$,
and the numbers $\eps'$ and $\eps$
satisfy the assumptions of Lemma~\ref{localopen'}.
By the first part of that lemma,
$$ \Psi ((W_i \cap X') \times \Rplus) 
   = \Psi (W_i \times \Rplus) \ssminus \{ 0 \} .  $$
Because $ X = X' \cup \bigcup\limits_{i=1}^N W_i$,
this implies that
\begin{equation} \labell{star}
   \Psi (X' \times \Rplus) = \Psi( X \times \Rplus ) \ssminus \{ 0 \} , 
\end{equation}
which is item (3) that we had set to prove.

By the assumptions on $\varphi\Inv(0)$, the image $\varphi(X)$ 
contains a subset (namely, $\varphi(U_x)$ for $\varphi(x)=0$) 
that is not contained in any two dimensional subspace of $\R^n$. 
Rewriting~\eqref{star} 
as $\Rplus \cdot \varphi(X') = \Rplus \cdot \varphi(X) \ssminus \{ 0 \}$,
we deduce that $\varphi(X')$ is not contained
in a two dimensional subspace of $\R^n$ either.

Let $(x,\lambda)$ be a point of $X' \times \Rplus$.
If $x$ belongs to one of the sets $W_1,\ldots,W_N$,
then Lemma~\ref{localconvex'}
guarantees the existence of a neighborhood $U'$
of $(x,\lambda)$ in $X' \times \Rplus$
such that the map $\Psi|_{U'}$ is convex
and the map $\Psibar|_{U'}$ is open to its image.
If $x$ does not belong to any $W_i$, then 
$\varphi(x)$ is outside $B_{\eps'}$,
and $(X \ssminus \varphi\Inv(\ol{B}_\eps)) \times \Rplus$
is a neighborhood of $(x,\lambda)$ in $X \times \Rplus$;
by assumption, it contains a smaller neighborhood, $U$,
such that $\Psi|_U$ is convex and $\Psibar|_U$ is open to its image.

We have just shown that every point in $X' \times \Rplus$
has a neighborhood $U'$ in $X' \times \Rplus$
such that the map $\Psi|_{U'}$ is convex
and the map $\Psibar|_{U'}$ is open to its image.
We have also shown that $X'$ is connected
and that $\varphi(X')$ is not contained in a two dimensional subspace
of $\R^n$.  Also, being a closed subset of a compact space,
$X'$ is compact.
Thus, the space $X'$ and the map $\varphi|_{X'}$ satisfy 
the assumptions of Proposition~\ref{stretch}.
The conclusions of this proposition are exactly the items (1) and (2)
that we had set to prove.
\end{proof}

\begin{Definition}\labell{path lifting}
A continuous map $\varphi\co X \to B$ 
has the \emph{weak path lifting property}
if, for any point $x$ in $X$ 
and path $\gamma \co [0,1] \to B$
with $\gamma(0) = \varphi(x)$,
there exists a path $\tgamma \co [0,1] \to X$
and a weakly monotone reparametrization
$s \co [0,1] \to [0,1]$, with $s(0)=0$ and $s(1)=1$,
such that $\tgamma(0) = x$ and $\varphi(\tgamma(t)) = \gamma(s(t))$
for all $t \in [0,1]$.
\end{Definition}

\begin{Lemma} \labell{usage of path lifting}
Let $U$ be a Hausdorff topological space.
Let $C$ be a closed convex cone in $\R^n$
with vertex at the origin.
Let $\eps > 0$.
Let $\varphi \co U \to C$ be a continuous map.

Assume that
\begin{itemize}
\item
The image $\varphi(U)$ contains $B_{\eps} \cap C$.
\item
The map $\varphi \co U \to \varphi(U)$
has the weak path lifting property,
and its level sets are path connected.
\end{itemize}

Then
\begin{itemize}
\item
The map
$\varphi|_{\varphi\Inv(B_\eps)}$ is convex.
\item
For every $0 < \delta < \eps$,
every point $x$ in $\varphi\Inv(\del B_\delta)$
has a neighborhood $V_x$
such that the map $\varphi|_{\varphi\Inv(\del B_\delta) \cap V_x}$
is spherically convex.
\end{itemize}
\end{Lemma}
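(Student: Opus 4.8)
The plan is to prove both conclusions by the same two--step device: first lift, through $\varphi$, a straight segment (for the first conclusion) or a short geodesic arc (for the second) by means of the weak path lifting property; then close up to the prescribed second endpoint by a path lying inside a single level set, using path connectedness of level sets. The hypothesis that needs the most care is that $\varphi(U)$ contains $B_\eps \cap C$: this is precisely what guarantees that the segments and arcs we wish to lift genuinely lie in the image $\varphi(U)$, where the lifting property is available.

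For the convexity of $\varphi|_{\varphi\Inv(B_\eps)}$, I would take $x_0, x_1 \in \varphi\Inv(B_\eps)$ and set $p_i = \varphi(x_i)$. Since $B_\eps$ is convex and $C$ is convex, the segment $[p_0,p_1]$ lies in $B_\eps \cap C$, hence in $\varphi(U)$. Applying the weak path lifting property to the affine parametrization $t \mapsto (1-t)p_0 + t p_1$, with starting point $x_0$, produces a path $\tgamma$ in $U$ and a weakly monotone reparametrization $s$ with $\tgamma(0) = x_0$ and $\varphi(\tgamma(t)) = (1 - s(t))p_0 + s(t)p_1$. Then $\varphi \circ \tgamma$ is weakly monotone straight with image in $[p_0,p_1] \subset B_\eps$, so $\tgamma$ stays in $\varphi\Inv(B_\eps)$ and $\tgamma(1)$ lies in the level set $\varphi\Inv(p_1)$. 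Concatenating $\tgamma$ with a path from $\tgamma(1)$ to $x_1$ inside $\varphi\Inv(p_1)$ --- which lies in $\varphi\Inv(B_\eps)$ and is path connected by hypothesis --- gives the required path, since its composition with $\varphi$ is a weakly monotone straight path followed by a constant path at $p_1$, hence still weakly monotone straight.

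For the local spherical convexity on $\varphi\Inv(\del B_\delta)$, fix $\delta \in (0,\eps)$ and $x \in \varphi\Inv(\del B_\delta)$ with $w = \varphi(x)$, choose an open half--space $H$ with $0 \in \del H$ and $w \in H$ (for instance $H = \{ y \mid \la y, w \ra > 0 \}$), and put $V_x = \varphi\Inv(H)$; since $\del B_\delta \subset B_\eps$ this gives $\varphi\Inv(\del B_\delta) \cap V_x = \varphi\Inv(\del B_\delta \cap H)$. Take $x_0, x_1$ in this set with non--antipodal images $w_0, w_1$. The short geodesic $\sigma$ from $w_0$ to $w_1$ lies in $\del B_\delta \cap H$, because the open hemisphere $\del B_\delta \cap H$ is spherically convex, and it lies in $C$, because $\sigma$ is the radial projection to $\del B_\delta$ of the segment $[w_0,w_1]$, which avoids the origin ($w_0,w_1$ being non--antipodal) and lies in the convex cone $C$ (cf.\ Lemma~\ref{straight}). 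Hence $\sigma \subset \del B_\delta \cap H \cap C \subset B_\eps \cap C \subset \varphi(U)$. Lifting $\sigma$ from $x_0$ via the weak path lifting property and closing up inside the path--connected level set $\varphi\Inv(w_1) \subset \varphi\Inv(\del B_\delta \cap H)$, exactly as in the previous paragraph, yields a path whose composition with $\varphi$ is a weakly monotone short geodesic followed by a constant path at its own endpoint, hence again a weakly monotone short geodesic of the same length $< \pi$.

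The step I expect to be the main, if modest, obstacle is precisely the bookkeeping that makes the lifting applicable: one must simultaneously invoke convexity of the ball $B_\eps$, convexity together with the cone structure of $C$, and spherical convexity of an open hemisphere of $\del B_\delta$, so as to confine the relevant segment or arc inside $B_\eps \cap C \subset \varphi(U)$. The remaining points --- that concatenation respects weak monotone straightness, and that appending a constant path at the endpoint of a weakly monotone short geodesic keeps it a weakly monotone short geodesic of the same length --- are immediate from the definitions in Sections~\ref{sec:convex maps} and~\ref{sec:spherical geometry}.
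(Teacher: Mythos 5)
Your proposal is correct and follows essentially the same route as the paper: the paper's proof restricts $\varphi$ to $\varphi\Inv(E)$ for $E=B_\eps$ (respectively $E=H_x\cap\del B_\delta$ with $V_x=\varphi\Inv(H_x)$), notes that $E\cap\varphi(U)$ is convex (respectively spherically convex), and concludes via the inherited weak path lifting property and connected level sets --- which is exactly your lift--then--close--up--in--a--level--set device, with the same choice of half--space neighborhood. You merely make explicit the step the paper leaves as ``it follows,'' including the correct verification that the lifted path stays in the relevant preimage and that the segment/arc lands inside $B_\eps\cap C\subset\varphi(U)$.
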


\begin{proof}
For any subset $E$ of $\R^n$, the restriction
$\varphi|_{\varphi\Inv(E)} \co \varphi\Inv(E) \to E \cap \varphi(U)$
has the weak path lifting property,
and its level sets are path connected.
This follows from the analogous properties of $\varphi$.
If the set $E \cap \varphi(U)$ is convex,
it follows that the map $\varphi|_{\varphi\Inv(E)}$ is convex.
If the set $E \cap \varphi(U)$ is contained in $\del B_\delta$
and is spherically convex, it follows that the map
$\varphi|_{\varphi\Inv(E)} \co \varphi\Inv(E) \to \del B_\delta$
is spherically convex.

The first part of the lemma follows by setting $E = B_\eps$
and noting that the set $E \cap \varphi(U) = B_\eps \cap C$ is convex.

For the second part of the lemma, fix $0 < \delta < \eps$
and fix $x$ in $\varphi\Inv(\del B_\delta)$.
Let $H_x \subset \R^n$ be an open half--space whose boundary contains the origin
such that $\varphi(x) \in H_x$.
Let $E = H_x \cap \del B_\delta$.
Then $E \cap \varphi(U) = \del B_\delta \cap H_x \cap C$
is a spherically convex subset of $\del B_\delta$,
and $\varphi\Inv(E) = \varphi\Inv(\del B_\delta) \cap \varphi\Inv(H_x)$.
So $V_x := \varphi\Inv(H_x)$
is a neighborhood of $x$ such that the map $\varphi|_{\varphi\Inv(\del B_\delta)\cap V_x}$ is spherically convex. 
\end{proof}

\begin{Proposition} \labell{with zero'}
Let $X$ be a compact connected Hausdorff topological space
and $\varphi \co X \to \R^n$ a continuous map.
Define maps $\Psi \co X \times \Rplus \to \R^n$
and $\Psibar \co (X \ssminus \varphi\Inv(0)) \times \Rplus \to S^{n-1}$
by $\Psi(x,\lambda) = \lambda \varphi(x)$
and $\Psibar = \Psi / \| \Psi \|$.
Assume that
\begin{itemize}
\item The image $\varphi(X)$ contains $0$. 	
\item
For every point $x$ of $X$ with $\varphi(x) = 0$,
there exists an open neighborhood $U_x$ of $x$ in $X$
and a closed convex cone $C_x$ in $\R^n$
with vertex at the origin such that
\begin{itemize}
\item[\dash]
The cone $C_x$ is not contained in a two dimensional subspace of $\R^n$.
\item[\dash]
The image $\varphi(U_x)$ is an open subset of $C_x$.
\item[\dash]
The map $\varphi|_{U_x} \co U_x \to \varphi(U_x)$ is open,
has the weak path lifting property,
and its level sets are path connected.
\end{itemize}
\item
For every point $(x,\lambda)$ of $X \times \Rplus$
with $\varphi(x) \neq 0$,
every neighborhood of $(x,\lambda)$ in $X \times \Rplus$
contains a smaller neighborhood $U$
such that the map $\Psi|_U$ is convex
and such that the map $\Psibar|_U$ is (defined and) open 
as a map to its image.
\end{itemize}
Then the following results hold.  Let
$$ C = \Psi (X \times \Rplus) = \Rplus \cdot \varphi(X).$$
\begin{enumerate}
\item
For any two points $y_0$ and $y_1$ in $X \times \Rplus$,
if $\Psi(y_0)$ and $\Psi(y_1)$ are not both zero,
there exists a path $\gamma \co [0,1] \to X \times \Rplus$
such that $\gamma(0) = y_0$ and $\gamma(1) = y_1$
and such that $\Psi \circ \gamma \co [0,1] \to \R^n$
is weakly monotone straight.
\item
For every $x \in \varphi\Inv(0)$, the cone $C_x$ is equal to $C$.
\item
The map $\Psi$ is open as a map to $C$.
\end{enumerate}
\end{Proposition}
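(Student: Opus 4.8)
\emph{Overview of the plan.} The strategy is to put ourselves in a position to invoke Lemma~\ref{A} and then to ``fill in'' the behaviour of $\Psi$ at and near the zero level set. \emph{Reduction step:} for each $x$ with $\varphi(x)=0$ choose $\eps_x>0$ so small that $B_{\eps_x}\cap C_x\subseteq\varphi(U_x)$ (possible since $\varphi(U_x)$ is an open neighbourhood of $0$ in $C_x$), and replace $U_x$ by $\tilde U_x:=U_x\cap\varphi\Inv(B_{\eps_x})$. Then $\varphi(\tilde U_x)=\varphi(U_x)\cap B_{\eps_x}$ is again an open subset of $C_x$ containing $0$, hence not in a two dimensional subspace; $\varphi|_{\tilde U_x}$ is still open to its image by Remark~\ref{restrict open}; and by Lemma~\ref{usage of path lifting} (applied to $\varphi|_{U_x}\co U_x\to C_x$ with $\eps=\eps_x$) the map $\varphi|_{\tilde U_x}$ is convex and, for $0<\delta<\eps_x$, every point of $\varphi\Inv(\del B_\delta)\cap\tilde U_x$ has a neighbourhood $V\subseteq\tilde U_x$ on which $\varphi|_{\varphi\Inv(\del B_\delta)\cap V}$ is spherically convex. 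Thus $\varphi$, the $\tilde U_x$ and the $C_x$ satisfy the hypotheses of Lemma~\ref{disjoint'} and of Lemma~\ref{A}. Since Lemma~\ref{A} holds for all small $\eps$, I fix such an $\eps$, and moreover require $\eps<\eps'/2$, where $\eps'$ together with disjoint open sets $W_1,\dots,W_N$ and closed convex cones $C_1,\dots,C_N$ is produced by Lemma~\ref{disjoint'} inside the proof of Lemma~\ref{A}: the union of the $W_i$ is $\varphi\Inv(B_{\eps'})$, $\varphi(W_i)=C_i\cap B_{\eps'}$, each $\varphi|_{W_i}$ is convex and open, the connected components of $\varphi\Inv(0)$ are $[x_1]\subseteq W_1,\dots,[x_N]\subseteq W_N$, and $C_x=C_i$ for $x\in[x_i]$ (Lemma~\ref{good U}). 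Write $X'=X\ssminus\varphi\Inv(B_\eps)$ and $C=\Psi(X\times\Rplus)=\Rplus\cdot\varphi(X)$. Lemma~\ref{A} then gives its three conclusions for $X'$; its proof also shows that $X'$ is compact and connected and $\varphi(X')$ is not in a two dimensional subspace, and, running the proof of Proposition~\ref{stretch} for $X'$, that by Corollaries~\ref{every} and~\ref{shortening} the map $\psi:=\varphi/\|\varphi\|\co X'\to S^{n-1}$ (defined since $\varphi$ is nonvanishing on $X'$) is spherically convex and open to its image; since $\Psi(X'\times\Rplus)=C\ssminus\{0\}$ by part~(3) of Lemma~\ref{A}, this image is $\psi(X')=C\cap S^{n-1}$, which is connected.

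\emph{Conclusion (2).} Fix $i$. From $\varphi(W_i\cap X')=\varphi(W_i)\ssminus B_\eps=C_i\cap(B_{\eps'}\ssminus B_\eps)$ and the fact that $C_i$ is a cone, $\psi(W_i\cap X')=C_i\cap S^{n-1}$. This set is closed in $S^{n-1}$, hence in $\psi(X')$, because $C_i$ is closed; it is open in $\psi(X')$ because $W_i\cap X'$ is open in $X'$ and $\psi|_{X'}$ is open to its image; and it is nonempty because $C_i$ is not in a two dimensional subspace. As $\psi(X')$ is connected, $C_i\cap S^{n-1}=\psi(X')=C\cap S^{n-1}$, whence $C_i=C$. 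Since every $x\in\varphi\Inv(0)$ lies in some $[x_i]$ with $C_x=C_i=C$, this is conclusion~(2).

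\emph{Conclusion (1).} I will use two claims. (A) If $\varphi(x)\neq0$ then $y=(x,\lambda)$ is joined to some $(z,\mu)$ with $z\in X'$ by a path whose $\Psi$-image is the constant $\Psi(y)$: if $x\in X'$ take $(z,\mu)=y$; otherwise $x\in W_i$ for some $i$, and since (Lemma~\ref{localopen'}, as applied in the proof of Lemma~\ref{A}) $\Psi((W_i\cap X')\times\Rplus)=\Psi(W_i\times\Rplus)\ssminus\{0\}$ contains $\Psi(y)$, pick $(z,\mu)\in(W_i\cap X')\times\Rplus$ with $\Psi(z,\mu)=\Psi(y)$ and join $y$ to it inside $W_i\times\Rplus$, on which $\Psi$ is convex by Lemma~\ref{slide'} (as $\varphi|_{W_i}$ is convex); a weakly monotone straight path whose endpoints have equal $\Psi$-value is $\Psi$-constant. (B) If $\varphi(x)\neq0$ then, for any $i$, $y=(x,\lambda)$ is joined to some $(x^{*},1)$ with $x^{*}\in[x_i]$ by a path whose $\Psi$-image is a weakly monotone parametrization of $[\Psi(y),0]$: with $c:=\eps'/(2\|\varphi(x)\|)$ we have $c\varphi(x)\in C\cap B_{\eps'}=\varphi(W_i)$ by~(2), so there is $z'\in W_i$ with $\varphi(z')=c\varphi(x)$, and $\|\varphi(z')\|=\eps'/2>\eps$, so $z'\in W_i\cap X'$; now concatenate a $\Psi$-constant path from $y$ to $(z,\mu)$ with $z\in X'$, $\Psi(z,\mu)=\lambda\varphi(x)$ (by~(A)), a path from $(z,\mu)$ to $(z',\lambda/(2c))$ inside $X'\times\Rplus$ furnished by part~(1) of Lemma~\ref{A} with $\Psi$-image $[\lambda\varphi(x),\tfrac12\lambda\varphi(x)]$ (a subsegment of $\Rplus\varphi(x)$, avoiding the origin), and a path from $(z',\lambda/(2c))$ to $(x^{*},1)$ inside $W_i\times\Rplus$ (where $\Psi$ is convex) with $\Psi$-image $[\tfrac12\lambda\varphi(x),0]$, for any $x^{*}\in[x_i]$; these three $\Psi$-images lie on $\Rplus\varphi(x)\cup\{0\}$ with decreasing norm, so the concatenation works. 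Now let $y_0=(x_0,\lambda_0)$, $y_1=(x_1,\lambda_1)$ with $\Psi(y_0),\Psi(y_1)$ not both zero; the case $\varphi(x_0)=\varphi(x_1)=0$ is excluded, so assume $\varphi(x_1)\neq0$. If $\varphi(x_0)\neq0$ and $0\notin[\Psi(y_0),\Psi(y_1)]$: join $y_0,y_1$ to points $(z_0,\mu_0),(z_1,\mu_1)\in X'\times\Rplus$ with the same $\Psi$-values by $\Psi$-constant paths (by~(A)), apply part~(1) of Lemma~\ref{A}, and concatenate. If instead $0\in[\Psi(y_0),\Psi(y_1)]$ (in particular when $\varphi(x_0)=0$): the sets $\varphi\Inv(0)\cap\tilde U_x$ form an open cover of $\varphi\Inv(0)$ by path connected sets (level sets of the convex map $\varphi|_{\tilde U_x}$ are path connected), so the connected components of $\varphi\Inv(0)$ are path connected; let $[x_{i_0}]$ be the one containing $x_0$ if $\varphi(x_0)=0$ and any one otherwise. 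Using~(B), join $y_1$ to some $(x^{*}_1,1)$ with $x^{*}_1\in[x_{i_0}]$; if $\varphi(x_0)\neq0$ join $y_0$ to some $(x^{*}_0,1)$ with $x^{*}_0\in[x_{i_0}]$ by~(B), and if $\varphi(x_0)=0$ join $y_0=(x_0,\lambda_0)$ to $(x_0,1)$ along $\{x_0\}\times\Rplus$ (where $\Psi\equiv0$), setting $x^{*}_0=x_0$; finally join $(x^{*}_0,1)$ to $(x^{*}_1,1)$ by a path in $\varphi\Inv(0)\times\Rplus$ using path connectedness of $[x_{i_0}]$. Concatenating, $\Psi$ runs weakly monotonically down the ray through $\Psi(y_0)$ to the origin and then out the ray through $\Psi(y_1)$, a weakly monotone straight parametrization of $[\Psi(y_0),\Psi(y_1)]$. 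This is conclusion~(1).

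\emph{Conclusion (3) and the main obstacle.} By parts~(2) and~(3) of Lemma~\ref{A}, $\Psi\co X'\times\Rplus\to C\ssminus\{0\}$ is open, hence open as a map to $C$ since $C\ssminus\{0\}$ is open in $C$. For each $i$, $\varphi|_{W_i}$ is open onto $\varphi(W_i)=C_i\cap B_{\eps'}=C\cap B_{\eps'}$ (by~(2)), which is open in $C$, and since multiplication by a positive scalar is a homeomorphism of the cone $C$, for open $V\subseteq W_i$ and open interval $I\subseteq\Rplus$ the set $\Psi(V\times I)=\bigcup_{\lambda\in I}\lambda\,\varphi(V)$ is open in $C$; thus $\Psi|_{W_i\times\Rplus}$ is open to $C$. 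As $X\times\Rplus=(X'\times\Rplus)\cup\bigcup_i(W_i\times\Rplus)$ and openness is local on the source, $\Psi\co X\times\Rplus\to C$ is open. I expect the main obstacle to be conclusion~(2): identifying the global cone $C$ with the local cones $C_x$. The decisive observation is that each $C_i\cap S^{n-1}$ is simultaneously closed (because $C_i$ is closed) and open (because $\psi|_{X'}$ is open to its image) in the connected set $\psi(X')$. After that, the genuinely delicate point in~(1) is the case where $[\Psi(y_0),\Psi(y_1)]$ passes through the origin: one must route both endpoints through the \emph{same} component of the possibly disconnected zero level set, which is what forces the bookkeeping in claim~(B) and the path connectedness argument for the components of $\varphi\Inv(0)$.
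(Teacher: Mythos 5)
Your proof is correct, and its skeleton is the paper's: reduce the hypotheses at the zero level via Lemma~\ref{usage of path lifting}, feed the result into Lemma~\ref{A}, and then patch in the behaviour at and near $\varphi\Inv(0)$. The differences are in the endgame. For conclusion~(2) the paper first establishes that $\Psi|_{(X \ssminus \varphi\Inv(0)) \times \Rplus}$ is open as a map to $C$ and then observes that $C_x \ssminus \{0\} = \Psi((U_x \ssminus \varphi\Inv(0)) \times \Rplus)$ is open in the connected set $C$ while $C_x$ is closed, forcing $C_x = C$; you instead run the clopen argument on the sphere, using that $\psi(W_i \cap X') = C_i \cap S^{n-1}$ is open (openness of $\psi|_{X'}$, extracted from the proof of Proposition~\ref{stretch}) and closed in the connected set $C \cap S^{n-1}$. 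Both are sound; the paper's version needs no appeal to the internals of Lemma~\ref{A}, whereas yours leans on the $W_i$ decomposition and on $\psi|_{X'}$ being open, which are only available by reopening the proofs of Lemmas~\ref{disjoint'} and~\ref{A}. For conclusion~(1) in the case $0 \in [\Psi(y_0),\Psi(y_1)]$, the paper routes through an \emph{arbitrary single point} $y' = (x,\lambda)$ of the zero level: since $C_x = C$, the value $\eps\Psi(y_1)$ lies in $\varphi(U_x)$ for small $\eps$, and convexity of $\varphi|_{U_x}$ produces the outgoing monotone path directly, after which the already-proved non-zero-crossing case finishes. Your claim~(B), which lowers each endpoint along its ray into a prescribed component $[x_{i_0}]$ of $\varphi\Inv(0)$ and then uses path connectedness of that component, achieves the same thing with more bookkeeping; your closing remark that one \emph{must} route both endpoints through the same component is therefore a slight misdiagnosis --- the paper shows this is avoidable --- but your construction is valid as written. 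Your argument for conclusion~(3) (openness of $\Psi|_{W_i \times \Rplus}$ to $C$ via $\Psi(V \times I) = \bigcup_{\lambda \in I} \lambda\,\varphi(V)$ and the fact that dilation is a homeomorphism of $C$) is a clean justification of a step the paper states more tersely.
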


\begin{proof}
For $x \in \varphi\Inv(0)$, that fact that the image $\varphi(U_x)$ 
is open in $C_x$ implies that $\varphi(U_x)$ 
contains $B_{\eps} \cap C_x$ for some $\eps >0$.
Applying Lemma~\ref{usage of path lifting} to $\varphi\co U_x \to C_x$,
and after replacing $U_x$ by $\varphi\Inv(B_\eps) \cap U_x$
(and continuing to denote it by $U_x$),
we get the following facts.
\begin{itemize}
\item[\dash]
$\varphi|_{U_x}\co U_x \to C_x$ is convex and open.
\item[\dash]
For every sufficiently small $\delta>0$,
every point in $\varphi\Inv(\del B_\delta)\cap U_x$ 
has a neighborhood $V \subset U_x$ such that 
$\varphi|_{\varphi\Inv(\del B_\delta)\cap V}$ is spherically convex. 
\end{itemize}

The space $X$ and the map $\varphi$ now satisfy the assumptions 
of Lemma~\ref{A}.
Let $\eps'$ be a positive number that is sufficiently small
so that the conclusions of Lemma \ref{A} hold for all $\eps$
such that $0 < \eps < \eps'$.
Then, for every such $\eps$, the following facts are true.
Let $X'_\eps := X \ssminus \varphi\Inv(B_\eps)$.
\begin{itemize}
\item[(a)]
For every two points $y_0$ and $y_1$ in $X'_\eps \times \Rplus$,
if the segment $[\Psi(y_0),\Psi(y_1)]$
does not contain the origin,
then there exists a path in $X'_\eps \times \Rplus$
connecting $y_0$ to $y_1$ whose composition with $\Psi$
is weakly monotone straight.
\item[(b)]
The image $\Psi(X'_\eps \times \Rplus)$ is equal
to $C \ssminus \{ 0 \}$.
\item[(c)]
The map $\Psi|_{X'_\eps \times \Rplus}$ is open as a map to $C$.
\end{itemize}

Because
the space $ X \ssminus \varphi\Inv(0) $ is the union of the open subsets
$X'_\eps$, for $0 < \eps < \eps'$,
and by the fact (c), 
the map $\Psi|_{ ( X \ssminus \varphi\Inv(0) ) \times \Rplus}$
is open as a map to~$C$.

Choose any $x \in \varphi\Inv(0)$.
Consider the set $\Psi( (U_x \ssminus \varphi\Inv(0)) \times \Rplus)$.
This set is equal to $C_x \ssminus \{ 0 \}$,
because $C_x$ is a cone with vertex at the origin
and $\varphi(U_x)$ is open in $C_x$ and contains the origin.
But this set is also open in $C$,
because $\Psi|_{ (X \ssminus \varphi\Inv(0)) \times \Rplus}$
is open as a map to $C$.   So $C_x \ssminus \{ 0 \}$ is open in $C$.
Because $C_x$ is a closed convex cone and $C$ is connected,
it follows that $C_x$ is equal to $C$.
This proves Claim (2).

In particular, since $C_x = C$, 
we now know that the restriction of $\varphi$ to $U_x$
is open as a map to $C$.
This implies that the restriction of $\Psi$ to $U_x \times \Rplus$
is also open as a map to $C$.
This is true for every $x \in \varphi\Inv(0)$.
But we also showed that the restriction of $\Psi$
to $(X \ssminus \varphi\Inv(0)) \times \Rplus$ is open as a map to $C$.
Because the space $X$ is the union of the open subsets $U_x$
for $x \in \varphi\Inv(0)$ and the open subset $X \ssminus \varphi\Inv(0)$,
we obtain Claim (3).

Claim (1), in the case that $0 \not\in [\Psi(y_0),\Psi(y_1)]$,
follows from the above fact (a)
when $\eps$ is chosen small enough
so that both $y_0$ and $y_1$ lie in $X'_\eps \times \Rplus$.

Suppose that $\Psi(y_0)$ and $\Psi(y_1)$ are both nonzero
but $0 \in [\Psi(y_0),\Psi(y_1)]$.
Choose any $y' \in \Psi\Inv(0)$.
If we can find paths from $y_0$ to $y'$ and from $y'$ to $y_1$
whose compositions with $\Psi$ are weakly monotone straight paths,
their concatenation will give a path from $y_0$ to $y_1$
whose composition with $\Psi$ is a weakly monotone straight path.

It remains to prove Claim (1) in the case that, say,
$\Psi(y_0) = 0$ and $\Psi(y_1) \neq 0$.
Fix such $y_0$ and $y_1$.
Write $y_0 = (x,\lambda)$; then $x$ is in $\varphi\Inv(0)$.
By the definition of $C$ and because $C=C_x$,
the value $\Psi(y_1)$ belongs to $C_x$.
Because $\varphi(U_x)$ is an open subset of $C_x$
that contains the origin,
if $\eps$ is a sufficiently small positive number,
then $\eps \Psi(y_1)$ belongs to $\varphi(U_x)$.
Fix such an $\eps$, and let $x'$ be a point in $U_x$
such that $\varphi(x') = \eps \Psi(y_1)$.
Because $\varphi|_{U_x}$ is convex,
there exists a path $x(\cdot)$ in $U_x$ from $x$ to $x'$
whose composition with $\varphi$ is a weakly monotone straight path
from the origin to $\eps \Psi(y_1)$.

Then $(x(\cdot),1)$ is a path from $(x,1)$ to $(x',1)$
whose composition with $\Psi$ is a weakly monotone straight path
from the origin to $\eps \Psi(y_1)$.
Concatenating with a path from $y_0 = (x,\lambda)$ to $(x,1)$
which is entirely contained in $\{ x \} \times \Rplus$,
we obtain a path $\gamma'$ from $y_0$ to $(x',1)$
whose composition with $\Psi$ is a weakly monotone straight path
from the origin to $\eps \Psi(y_1)$.

By the case of Claim (1) that we already proved,
there exists a path $\gamma''$ from $(x',1)$ to $y_1$
whose composition with $\Psi$ is a weakly monotone straight path
from $\eps \Psi(y_1)$ to $\Psi(y_1)$.
The concatenation of $\gamma'$ with $\gamma''$
is a path from $y_0$ to $y_1$
whose composition with $\Psi$ is weakly monotone straight.

This completes the proof of Claim (1)
and of Proposition~\ref{with zero'}.
\end{proof}

% ==================================
\section{Linear maps on the simplex}
% ==================================
\labell{sec:linear}

We begin by setting some notation:  
\begin{gather*}
\R_+^n = \{ (s_1,\ldots,s_n) \in \R^n \ | \ s_j \geq 0
\text{ for } j=1,\ldots, n \}; \\
\Lambda = \{ (s_1,\ldots,s_n) \in \R_+^n \ | \ s_1 + \ldots + s_n < 1 \}.
\end{gather*}
The goal of this section is to prove the following proposition.

\begin{Proposition} \labell{orthant projection}
Let $L \co \R^n \to \R^k$ be a linear map, 
and let $A = L(\Lambda)$.  Then the map
$$ L|_{\Lambda} \co \Lambda \to A $$
is open, has the weak path lifting property
(cf.\ Definition~\ref{path lifting}),
and its level sets are path connected.
\end{Proposition}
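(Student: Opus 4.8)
The plan is to establish the three properties separately, in increasing order of difficulty.

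\emph{Level sets.} For $a \in A$ the set $(L|_\Lambda)\Inv(a) = \Lambda \cap L\Inv(a)$ is the intersection of the convex set $\Lambda$ with an affine subspace, hence is convex and therefore path connected.

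\emph{Openness.} Fix $x \in \Lambda$, let $I = \{\, i \mid x_i = 0 \,\}$, and let $T_x = \{\, v \in \R^n \mid v_i \geq 0 \text{ for all } i \in I \,\}$ be the cone of feasible directions of $\Lambda$ at $x$; after permuting coordinates $T_x = \R_{\geq 0}^{\,|I|} \times \R^{\,n-|I|}$. I would combine three facts. \textbf{(i)} From the defining inequalities of $\Lambda$ one has $\Lambda - x \subseteq T_x$, hence \emph{globally} $A - L(x) = L(\Lambda - x) \subseteq L(T_x)$. \textbf{(ii)} For all sufficiently small $r > 0$, $\Lambda \cap B(x,r) = (x + T_x) \cap B(x,r)$, because the inequalities $s_i \geq 0$ for $i \notin I$ and $s_1 + \dots + s_n < 1$ are strict at $x$ and remain valid nearby. \textbf{(iii)} The map $L|_{T_x} \co T_x \to L(T_x)$ is open at the origin: for every $r > 0$ there is $\delta > 0$ with $L(T_x) \cap B_\delta \subseteq L(T_x \cap B_r)$. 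Granting these,
\begin{align*}
 L(\Lambda \cap B(x,r)) &= L(x) + L(T_x \cap B_r) \\
 &\supseteq L(x) + \bigl(L(T_x) \cap B_\delta\bigr) \\
 &\supseteq L(x) + \bigl((A - L(x)) \cap B_\delta\bigr) = A \cap B(L(x),\delta),
\end{align*}
so the image of every neighborhood of $x$ in $\Lambda$ is a neighborhood of $L(x)$ in $A$; as $x$ is arbitrary, $L|_\Lambda$ is open.

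\emph{The crux} is fact (iii), which I expect to be the main obstacle. Here $L(T_x)$ is generated by the finitely many vectors $Le_i$ ($i \in I$) and $\pm Le_i$ ($i \notin I$), so it is a closed polyhedral convex cone; call these generators $g_1,\dots,g_p$, and note that each lifts to a \emph{unit} vector $\sigma_j = \pm e_{i(j)} \in T_x$. By Carathéodory's theorem for cones, each $z \in L(T_x)$ has a representation $z = \sum_{j \in J} t_j g_j$ with $t_j \geq 0$ and $\{g_j\}_{j \in J}$ linearly independent. There are only finitely many such subsets $J$, and for each the matrix with columns $(g_j)_{j \in J}$ has a left inverse; hence $\sum_j t_j \leq C\| z \|$ for a constant $C$ depending only on $L$ and $I$. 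Then $v := \sum_{j \in J} t_j \sigma_j$ lies in $T_x$ (a conical combination of elements of $T_x$), satisfies $L(v) = z$, and has $\|v\| \leq \sum_j t_j \leq C\|z\|$; taking $\delta = r/C$ yields (iii).

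\emph{Weak path lifting.} I would build a global continuous section of $L|_\Lambda$. Put $K(a) := L\Inv(a) \cap \ol\Lambda$ for $a \in A$, a nonempty compact convex set; using that $\Lambda$ is convex and dense in $\ol\Lambda$ one checks $K(a) = \ol{L\Inv(a) \cap \Lambda}$ (given $y \in K(a)$ and $y^* \in L\Inv(a) \cap \Lambda$, the half-open segment $[y^*, y)$ lies in $L\Inv(a) \cap \Lambda$, since its points have nonnegative coordinates and coordinate sum $<1$). The set-valued map $a \mapsto K(a)$ is upper semicontinuous with compact values (closed graph, $\ol\Lambda$ compact) and lower semicontinuous (from the openness just proved, together with the same segment trick to reach points of $K(a)$ lying on the discarded facet), hence continuous for the Hausdorff metric on compact subsets of $\R^n$. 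The Steiner point $s(K)$ of a compact convex set $K$ depends continuously on $K$ in the Hausdorff metric and always lies in the relative interior of $K$; since the relative interior of $K(a) = \ol{L\Inv(a) \cap \Lambda}$ coincides with that of $L\Inv(a) \cap \Lambda$ and is therefore contained in $\Lambda$, the assignment $\sigma(a) := s(K(a))$ defines a continuous section $\sigma \co A \to \Lambda$ of $L|_\Lambda$. Finally, given $x \in \Lambda$ and a path $\gamma \co [0,1] \to A$ with $\gamma(0) = L(x)$, the path $\sigma \circ \gamma$ lies in $\Lambda$ and satisfies $L \circ (\sigma \circ \gamma) = \gamma$ but begins at $\sigma(L(x))$ rather than at $x$; since $L\Inv(L(x)) \cap \Lambda$ is convex I would prepend a path inside this fiber from $x$ to $\sigma(L(x))$ and let the reparametrization $s \co [0,1] \to [0,1]$ equal $0$ on the prepended portion and an affine increasing map onto $[0,1]$ on the remaining portion, obtaining a path $\tgamma$ in $\Lambda$ with $\tgamma(0) = x$ and $L(\tgamma(t)) = \gamma(s(t))$, as required.
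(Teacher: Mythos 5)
Your proposal is correct, and it follows the paper's overall skeleton for two of the three claims while taking a genuinely different route for the third. For the level sets, the argument is identical (convexity of $\Lambda \cap L\Inv(a)$). For openness, your reduction to the tangent cone $T_x$ at a point, followed by conical Carath\'eodory and a uniform bound $\|v\| \le C\|z\|$ on a right inverse over the finitely many linearly independent subsets of generators, is essentially the paper's chain Lemma~\ref{subconvex} $\to$ Lemma~\ref{orthant loc open} $\to$ Lemma~\ref{sector loc open} $\to$ Lemma~\ref{orthant open}, just phrased quantitatively rather than via the observation that $L$ restricts to a homeomorphism on each simplicial subcone $\R_+^J$. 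The real divergence is in the weak path lifting property: both proofs reduce it to producing a continuous section $\sigma \co A \to \Lambda$ and then correcting the starting point inside a connected fiber, but the paper builds $\sigma$ by an elementary recursive construction on the faces of $\ol{A}$ followed by the radial rescaling $\beta \mapsto t_\beta\,\sigma^{\mathrm{ext}}(\beta^{\mathrm{ext}})$ (Lemmas~\ref{projection to Aext} and~\ref{exists section}), whereas you select the Steiner point of each fiber $K(a) = L\Inv(a) \cap \ol{\Lambda}$. Your route is conceptually cleaner but imports two nontrivial external facts that you should at least reference: the Hausdorff-continuity and the relative-interior property of the Steiner point (for the polytopes $K(a)$ the latter follows from the exterior-angle formula $s(K) = \sum_v \gamma(v,K)\,v$ with all weights positive; in general it is in Schneider's book), plus the Hausdorff-continuity of $a \mapsto K(a)$, whose lower semicontinuity you correctly note depends on the openness already proved. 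The paper's construction avoids any such citations but pays for it with the somewhat delicate continuity argument for $\beta \mapsto \beta^{\mathrm{ext}}$. Both are complete proofs.
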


\begin{Remark} \labell{projection not open}
The restriction of a linear projection
to a convex set is \emph{not} necessarily open
as a map to its image.
For example, let $D_r \subset \R^2$ denote
the closed disc of radius $r$ and center $(r,0)$,
and let $X = \{ (x,y,r) \in \R^3 \ | \ (x,y) \in D_r \}$.
Then $X$ is convex, and the restriction to $X$ 
of the projection $(x,y,z) \mapsto (x,y)$ 
is not open as a map to its image.
\end{Remark}

We will need the following variant of Carath\'{e}odory's theorem
from convex geometry:

\begin{Lemma}\labell{subconvex}
Let $v_1, \dots, v_n$ be vectors in $\R^k$. 
Let $w=\sum s_jv_j$ with $(s_1,\ldots,s_n) \in \Lambda$.
Then there exists $(s'_1,\ldots,s'_n) \in \Lambda$
such that $w=\sum s'_jv_j$ and such that the vectors
$v_j$ for which $s'_j \neq 0$ are linearly independent.
\end{Lemma}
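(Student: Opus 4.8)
The plan is to mimic the classical proof of Carath\'eodory's theorem, reducing the number of nonzero coefficients one at a time, while taking care to remain inside the \emph{open} simplex $\Lambda$ (the constraint $s_1+\ldots+s_n<1$, rather than $=1$, is what makes this a genuine variant of the usual statement).

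First I would set $S=\{\,j : s_j\neq 0\,\}$ and assume that the vectors $\{\,v_j : j\in S\,\}$ are linearly dependent, since otherwise there is nothing to prove; in particular this covers the case $S=\varnothing$, where $w=0$ and the empty collection of vectors is vacuously independent. Choose a nontrivial linear relation $\sum_{j\in S}c_jv_j=0$. After replacing $(c_j)$ by $(-c_j)$ if necessary, I may assume $\sum_{j\in S}c_j\leq 0$. Since $(c_j)\neq 0$, this forces $c_j<0$ for at least one $j\in S$: if every $c_j\geq 0$ then $\sum_{j\in S}c_j\geq 0$, hence $=0$, hence all $c_j=0$, a contradiction.

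Next I would consider, for $t\geq 0$, the coefficients $s_j(t)=s_j+tc_j$ for $j\in S$ and $s_j(t)=0$ for $j\notin S$. For every $t$ one has $\sum_j s_j(t)v_j=w$ and $\sum_j s_j(t)=\sum_j s_j+t\sum_{j\in S}c_j\leq\sum_j s_j<1$, so once nonnegativity is checked the point $(s_j(t))$ lies in $\Lambda$ and still represents $w$. Set $t^{*}=\min\{\,-s_j/c_j : j\in S,\ c_j<0\,\}$, a well-defined positive number; then $t^{*}c_j\geq -s_j$ for every $j\in S$ with $c_j<0$, so $s_j(t^{*})\geq 0$ for all $j$, while $s_j(t^{*})=0$ for the index $j$ achieving the minimum. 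Thus $(s_j(t^{*}))\in\Lambda$, represents $w$, and has strictly fewer nonzero entries than $(s_j)$.

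Finally I would iterate this step. The number of nonzero coefficients strictly decreases at each stage and is bounded below by $0$, so after finitely many steps we reach coefficients $(s'_1,\ldots,s'_n)\in\Lambda$ with $w=\sum s'_jv_j$ for which the vectors $\{\,v_j : s'_j\neq 0\,\}$ are linearly independent. I do not anticipate a serious obstacle; the only point requiring care is the bookkeeping needed to stay inside the \emph{open} simplex, which is precisely why the sign of the relation $(c_j)$ must be fixed so that $\sum_{j\in S}c_j\leq 0$ before moving in the direction $t\mapsto (s_j+tc_j)$.
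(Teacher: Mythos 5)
Your proof is correct and is essentially the same as the paper's: both run the standard Carath\'eodory reduction, choosing the sign of the linear relation so that the coefficient sum does not increase (your $\sum_{j\in S}c_j\leq 0$ is the paper's $\sum\lambda_j\geq 0$ with $\lambda=-c$), which is exactly what keeps the point in the open simplex. The only cosmetic difference is that you phrase the argument as an iteration on the number of nonzero coefficients while the paper phrases it as an induction on $n$.
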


\begin{proof}
We prove the lemma by induction on $n$.
If $n=1$, the lemma is obvious.
Suppose that the lemma is true for $n-1$ vectors,
and we will prove it for $n$ vectors.

Suppose that $w = \sum_{j=1}^n s_j v_j$
with $(s_1,\ldots,s_n) \in \Lambda$
and that $v_1,\ldots,v_n$ are not linearly independent.
If one or more of $s_1,\ldots,s_n$ is zero,
the required conclusion follows from the induction hypothesis.
Suppose that $s_1,\ldots,s_n$ are all positive.

Then there exist $\lambda_1, \dots, \lambda_n$ not all zero such that 
$\sum \lambda_j \geq 0$ and $\sum \lambda_j v_j =0$. 
At least one of the $\lambda_j$s is positive. 
Choose $c=\min\{\frac{s_j}{\lambda_j}\mid \lambda_j
>0\}=\frac{s_i}{\lambda_i}$. Then
$$ w = \sum s_jv_j -c\sum \lambda_jv_j=\sum (s_j-c\lambda_j) v_j.$$
Note that $s_j-c\lambda_j \geq 0$ for all $j$, and $s_i-c\lambda_i = 0$. 
Furthermore, since $c >0$ and $\sum \lambda_j \geq 0$,
\[
\sum (s_j - c\lambda_j) =\sum s_j - c \sum \lambda_j \leq \sum s_j < 1.
\] 
The required conclusion now follows from the induction hypothesis
applied to the vectors $\{ v_1, \ldots, v_n \} \ssminus \{ v_i \}$.
\end{proof}

\begin{Lemma} \labell{orthant loc open}
Let $L \co \R^n \to \R^k$ be a linear map,
and let $C = L(\R_+^n)$.
Then $L(\Lambda)$ is a relative neighborhood of $0$ in $C$.
\end{Lemma}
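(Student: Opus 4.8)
The plan is to cover the cone $C$ by finitely many \emph{simplicial} subcones and to observe that on each of these the model simplex $\Lambda$ restricts to a genuine corner neighborhood of the origin. Write $v_j = L(e_j)$, so that $C = L(\R_+^n) = \{\sum_j s_j v_j \mid s_j \geq 0\}$ and $L(\Lambda) = \{\sum_j s_j v_j \mid s_j \geq 0,\ \sum_j s_j < 1\}$. First I would record the conical form of Carath\'eodory's theorem: every $w \in C$ lies in $C_S := \{\sum_{j \in S} t_j v_j \mid t_j \geq 0\}$ for some subset $S \subseteq \{1,\dots,n\}$ with $\{v_j\}_{j \in S}$ linearly independent. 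This can be deduced from Lemma~\ref{subconvex}: given $w = \sum_j s_j v_j$ with $s_j \geq 0$, rescale by any $N > \sum_j s_j$ so that $w/N \in L(\Lambda)$, apply Lemma~\ref{subconvex} to $w/N$, and multiply the resulting representation back by $N$. Thus $C = \bigcup_S C_S$, a finite union indexed by the admissible subsets $S$, with $S = \varnothing$ contributing the point $0$.

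Next, fix an admissible $S$ and let $\iota_S \co \R^S \to \R^k$ be the linear map $(t_j)_{j \in S} \mapsto \sum_{j \in S} t_j v_j$. Linear independence of $\{v_j\}_{j \in S}$ makes $\iota_S$ a linear isomorphism onto $V_S := \mathrm{span}\{v_j \mid j \in S\}$, hence a homeomorphism onto $V_S$ (with its subspace topology from $\R^k$), carrying $\R_+^S$ onto $C_S$. The model simplex $\Lambda_S := \{t \in \R_+^S \mid \sum_{j} t_j < 1\}$ is a relative neighborhood of $0$ in $\R_+^S$, so $\iota_S(\Lambda_S)$ is a relative neighborhood of $0$ in $C_S$; choose $\eps_S > 0$ with $B_{\eps_S} \cap C_S \subseteq \iota_S(\Lambda_S)$. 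Extending a tuple $(t_j)_{j \in S}$ by zeros identifies $\Lambda_S$ with a subset of $\Lambda$, so $\iota_S(\Lambda_S) \subseteq L(\Lambda)$, and hence $B_{\eps_S} \cap C_S \subseteq L(\Lambda)$.

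Finally, put $\eps = \min_S \eps_S$, the minimum over the finitely many nonempty admissible $S$ (if all $v_j$ vanish then $C = \{0\}$ and there is nothing to prove). Then $\eps > 0$ and
$$ B_\eps \cap C \;=\; \bigcup_S \bigl( B_\eps \cap C_S \bigr) \;\subseteq\; \bigcup_S \bigl( B_{\eps_S} \cap C_S \bigr) \;\subseteq\; L(\Lambda),$$
and since $0 = L(0) \in L(\Lambda)$ this shows $L(\Lambda)$ is a relative neighborhood of $0$ in $C$. The only step with any content is the conical Carath\'eodory decomposition of $C$ (for which Lemma~\ref{subconvex} does the work); the rest is routine, because over a linearly independent set of vectors the cone, the simplex, and the orthant are linearly isomorphic to their coordinate models.
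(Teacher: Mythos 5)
Your proof is correct and follows essentially the same route as the paper's: decompose $C$ as the finite union of the subcones spanned by linearly independent subsets of the $v_j$ (via Lemma~\ref{subconvex}), note that $L$ restricts to a homeomorphism from the corresponding coordinate orthant onto each such subcone so that the simplex gives a relative neighborhood of $0$ there, and take the minimum of the resulting radii. Your explicit rescaling step to pass from Lemma~\ref{subconvex} (stated for points of $L(\Lambda)$) to arbitrary points of the cone is a small piece of bookkeeping the paper leaves implicit, but otherwise the two arguments coincide.
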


\begin{proof}
Let $e_1,\ldots,e_n$ denote the standard basis of $\R^n$,
and let $v_i = L(e_i)$ for $i=1,\ldots,n$. 
Let $\calJ$ denote the set of subsets $J \subset \{ 1,\ldots,n \}$
for which the vectors $v_j$, for $j \in J$, are linearly independent.

For each $J \in \calJ$, let
$ \R_+^J = \{ s \in \R_+^n \ | \ s_j = 0
   \text{ for all } j \not\in J \}$,
and let $C_J = L(\R_+^J)$.
Also, for $\eps > 0$, let $B_\eps$ denote the ball in $\R^k$
of radius $\eps$ centered at the origin.

For every $J \in \calJ$, the map $L$ restricts to a homeomorphism
from $\R_+^J$ to $C_J$. So $L(\Lambda\cap \R_+^J)$ is open in $C_J$
and contains the origin.
Let $\eps_J$ be a positive number such that $L(\Lambda \cap \R_+^J)$
contains $C_J \cap B_{\eps_J}$.
By Lemma~\ref{subconvex}, the union of the sets $C_J$,
for $J \in \calJ$, is all of $C$.
Let $\eps = \min_{J \in \calJ} \eps_J$.
Then $L(\Lambda)$ contains $C \cap B_\eps$.
\end{proof}

\begin{Lemma} \labell{sector loc open}
Let $L \co \R^n \to \R^k$ be a linear map,
let $0 \leq r \leq n$, let $S^n_r = \R_+^r \times \R^{n-r}$
be a sector, and let $C = L(S^n_r)$ be its image.
Let $\calO$ be a neighborhood of the origin in $S^n_r$.
Then $L(\calO)$ is a relative neighborhood of the origin in $C$.
\end{Lemma}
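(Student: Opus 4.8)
The plan is to reduce this to Lemma~\ref{orthant loc open} (the case of a full nonnegative orthant) by realizing the sector $S^n_r$ as a linear image of an orthant in a larger space, using the standard device of writing each unconstrained coordinate as a difference of two nonnegative ones.

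First I would upgrade Lemma~\ref{orthant loc open} from the specific simplex $\Lambda$ to an arbitrary neighborhood of the origin: \emph{if $L' \co \R^m \to \R^k$ is linear with image cone $C' = L'(\R_+^m)$ and $\calO'$ is any neighborhood of $0$ in $\R_+^m$, then $L'(\calO')$ is a relative neighborhood of $0$ in $C'$.} To see this, note that $\calO'$ contains $B_\delta \cap \R_+^m$ for some $\delta > 0$, hence contains $\eps \Lambda$ for all sufficiently small $\eps > 0$ (the analogue of $\Lambda$ in $\R^m$ satisfies $\| s \| < 1$ on it); then $L'(\calO') \supseteq \eps\, L'(\Lambda)$, and by Lemma~\ref{orthant loc open} together with $\eps C' = C'$, the set $\eps\, L'(\Lambda)$ is a relative neighborhood of $0$ in $C'$, so the larger set $L'(\calO') \subseteq C'$ is one as well.

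Next, set $m = 2n - r$, write the coordinates on $\R^m$ as $(s_1,\dots,s_r,a_1,\dots,a_{n-r},b_1,\dots,b_{n-r})$, and define the linear map $P \co \R^m \to \R^n$ by
$$ P(s,a,b) = (s_1,\dots,s_r,\, a_1 - b_1,\dots, a_{n-r} - b_{n-r}). $$
Since every real number is a difference of two nonnegative reals, $P(\R_+^m) = \R_+^r \times \R^{n-r} = S^n_r$. Put $\tilde L = L \circ P \co \R^m \to \R^k$, so that $\tilde L$ is linear and $\tilde L(\R_+^m) = L(S^n_r) = C$. Finally, given a neighborhood $\calO$ of $0$ in $S^n_r$, let $\calO' = P\Inv(\calO) \cap \R_+^m$; by continuity of $P$ and $P(0)=0$ this is a neighborhood of $0$ in $\R_+^m$, and $P(\calO') \subseteq \calO$, whence
$$ \tilde L(\calO') = L(P(\calO')) \subseteq L(\calO) \subseteq L(S^n_r) = C. $$
Applying the upgraded statement to $\tilde L$ shows that $\tilde L(\calO')$ is a relative neighborhood of $0$ in $\tilde L(\R_+^m) = C$; since $L(\calO)$ lies between $\tilde L(\calO')$ and $C$, it too is a relative neighborhood of $0$ in $C$.

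The only real idea here is the doubling trick that converts the unconstrained $\R^{n-r}$ factor into the image of a nonnegative orthant; after that the argument is pure continuity plus the fact that $C$ is a cone with vertex at the origin, so I do not anticipate a serious obstacle. One should just take care that the property ``relative neighborhood of $0$ in $C$'' is preserved when passing to a superset that still lies inside $C$.
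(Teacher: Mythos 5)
Your proof is correct, and it takes a genuinely different route from the paper. The paper also reduces to Lemma~\ref{orthant loc open}, but it does so by covering the sector $S^n_r$ with the $2^{n-r}$ orthants $F_o(\R_+^n)$ obtained by flipping signs in the unconstrained coordinates: for each sign pattern $o$ one finds $\rho_o > 0$ with $F_o(\rho_o\Lambda) \subset \calO$, applies Lemma~\ref{orthant loc open} to $x \mapsto L(F_o(\rho_o x))$ to get $B_{\eps_o} \cap C_o \subset L(\calO)$, and then takes $\eps = \min_o \eps_o$, using that $C$ is the union of the cones $C_o$. You instead realize $S^n_r$ as the image of a \emph{single} orthant $\R_+^{2n-r}$ under the doubling map $P(s,a,b) = (s, a-b)$ and apply the (easily rescaled) orthant lemma once to $L \circ P$. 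The trade-off: the paper stays in dimension $n$ at the cost of a finite union and a minimum over sign patterns, while you raise the dimension to $2n-r$ but get the conclusion in one application, with no combinatorics. Each of your auxiliary steps checks out --- the rescaling upgrade of Lemma~\ref{orthant loc open} works because $\Lambda \subset B_1 \cap \R_+^m$ and $C'$ is invariant under positive scaling, the pullback $\calO' = P\Inv(\calO) \cap \R_+^m$ is indeed a neighborhood of $0$ in $\R_+^m$, and the final sandwich $\tilde L(\calO') \subseteq L(\calO) \subseteq C$ correctly transfers the relative-neighborhood property.
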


\begin{proof}
For every 
$o = (o_1,\ldots,o_n) \in \{ 1 \}^r \times \{ -1, 1 \}^{n-r}$, 
let
$F_o \co \R^n \to \R^n$ be the map 
$$ F_o(s_1,\ldots,s_n) = (o_1 s_1 , \ldots , o_n s_n) .$$
The cone $C$ is the union, 
over all $o \in \{ 1 \}^r \times \{ -1, 1\}^{n-r} $, 
of the sets 
$$ C_o := L (F_o (\R_+^n) ).$$ 

Let $\rho_o$ be a positive number
such that $F_o(\rho_o\Lambda)$ is contained in $\calO$.
It exists because $F_o|_{\R_+^n}$ is continuous
and carries $0$ into the open set $\calO$,
and because every neighborhood of $0$ in $\R_+^n$
contains a set of the form $\rho\Lambda$ for some $\rho > 0$.

By Lemma~\ref{orthant loc open}, applied to the linear map
$x \mapsto  L (F_o(\rho_0 x))$,
there exists $\eps_o > 0$
such that $L(F_o(\rho_o \Lambda))$
contains $B_{\eps_o} \cap C_o$.
Let $\eps = \min_{o \in O} \eps_o$.
Then $L(\calO)$ contains $B_\eps \cap C$.
\end{proof}

\begin{Lemma} \labell{orthant open}
Let $L \co \R^n \to \R^k$ be a linear map,
and let $C = L(\R_+^n)$. Then the map
$$ L|_{\R_+^n} \co \R_+^n \to C $$
is open.
\end{Lemma}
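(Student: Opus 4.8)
The plan is to reduce, near an arbitrary point of $\R_+^n$, to the ``vertex case'' already treated in Lemma~\ref{sector loc open}. So fix a relatively open subset $\calO$ of $\R_+^n$ and a point $x \in \calO$; it suffices to show that $L(\calO)$ is a relative neighborhood of $L(x)$ in $C$. Let $J = \{ j \mid x_j > 0 \}$ and let
\[ S = \{ z \in \R^n \mid z_j \geq 0 \text{ for all } j \notin J \} , \]
which, after permuting coordinates, is a sector of the form $\R_+^r \times \R^{n-r}$ with $r = n - |J|$.

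The key observation I would record first is the twofold relation between $\R_+^n$ and the translated sector $x + S$. Locally: for every $\delta$ with $0 < \delta < \min_{j \in J} x_j$ one has $B(x,\delta) \cap \R_+^n = x + ( B_\delta \cap S )$, since for $j \in J$ the inequality $x_j + z_j \geq 0$ holds automatically once $|z| < \delta$, while for $j \notin J$ it becomes $z_j \geq 0$. Globally: $\R_+^n \subseteq x + S$, because for any $y \in \R_+^n$ and any $j \notin J$ we have $(y - x)_j = y_j \geq 0$, so $y - x \in S$; applying $L$ gives $C = L(\R_+^n) \subseteq L(x) + L(S)$.

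Next I would exploit Lemma~\ref{sector loc open}. Shrinking $\delta$ further so that in addition $B(x,\delta) \cap \R_+^n \subseteq \calO$ (possible since $\calO$ is relatively open), set $\calO' = B_\delta \cap S$, a neighborhood of the origin in $S$. Then $x + \calO' \subseteq \calO$, so $L(\calO) \supseteq L(x) + L(\calO')$. By Lemma~\ref{sector loc open} applied to the sector $S$, the set $L(\calO')$ is a relative neighborhood of the origin in $L(S)$, i.e.\ there is an open set $V_0 \subseteq \R^k$ containing the origin with $V_0 \cap L(S) \subseteq L(\calO')$. Putting $V = L(x) + V_0$ and using $C \subseteq L(x) + L(S)$, we get
\[ V \cap C \ \subseteq \ V \cap ( L(x) + L(S) )
  \ = \ L(x) + ( V_0 \cap L(S) )
  \ \subseteq \ L(x) + L(\calO') \ \subseteq \ L(\calO) , \]
so $V \cap C$ is a relative neighborhood of $L(x)$ in $C$ contained in $L(\calO)$. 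As $x \in \calO$ was arbitrary, $L(\calO)$ is relatively open in $C$; as $\calO$ was arbitrary, $L|_{\R_+^n} \co \R_+^n \to C$ is open. I do not expect a real obstacle: the content is entirely in Lemma~\ref{sector loc open}, and the step above is just a reduction from an arbitrary base point to the origin; the only thing to watch is the dual role of $x + S$ (it describes $\R_+^n$ locally near $x$ and contains $\R_+^n$ globally), together with routine bookkeeping about relative, rather than absolute, neighborhoods.
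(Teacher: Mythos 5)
Your proof is correct and follows essentially the same route as the paper's: both reduce to the vertex case by translating to the sector $S^n_r$, invoke Lemma~\ref{sector loc open}, and use the key containment $C \subseteq L(x) + L(S)$ (the paper phrases this as ``for every $y \in C$, the difference $y - L(x)$ is in $L(S^n_r)$''). Your write-up just makes the relative-neighborhood bookkeeping more explicit.
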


\begin{proof}
Let $x \in \R_+^n$.  Without loss of generality
we may assume that $x_j = 0$ for all $1 \leq j \leq r$
and $x_j > 0$ for all $r+1 \leq j \leq n$.

Let $\calO$ be a sufficiently small neighborhood of the origin 
in the sector $S^n_r$
so that the translation $x + \calO$ is
contained in $\R_+^n$.
Then $x + \calO$ is a neighborhood of $x$ in $\R_+^n$.

By Lemma~\ref{sector loc open}, $L(\calO)$ is a neighborhood
of the origin in $L(S^n_r)$. 
This implies that $L(x+\calO)$ is a neighborhood 
of $L(x)$ in $C$.  Indeed, let $\eps > 0$ be such that 
$L(\calO)$ contains $B_\eps \cap L(S^n_r)$.
Then $L(x+\calO)$ contains the $\eps$--neighborhood
of $L(x)$ in $C$.  This follows from the fact that,
for every $y \in C$, the difference $y-L(x)$ is in $L(S^n_r)$. 

This shows that the map $L|_{\R_+^n} \co \R_+^n \to C$
is open.  
\end{proof}

The closure of $\Lambda$ in $\R^n$ is the simplex
$$ \Lambdabar = \{ s \in \R_+^n \ | \ s_1 + \ldots + s_n \leq 1 \}.$$

\begin{Lemma} \labell{projection to Aext}
Let $L \co \R^n \to \R^k$ be a linear map,
$A = L(\Lambda)$, $\Abar = L(\Lambdabar)$, and $A^\ext = \Abar \ssminus A$.
Then
\begin{enumerate}
\item
For every $\beta \in A \ssminus \{ 0 \}$, 
the intersection $\R_+ \beta \cap A^\ext$
contains exactly one point; call it $\beta^\ext$.
\item
$\beta \mapsto \beta^\ext$ defines a continuous map 
from $A \ssminus \{ 0 \}$ to $A^\ext$.
\end{enumerate}
\end{Lemma}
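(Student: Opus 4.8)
The plan is to describe, for each ray through the origin, where it meets $\Abar$ and where it meets $A$, and then to take $\beta^\ext$ to be the point where that ray exits $A$. First I would record two structural facts. Writing $\rho\Lambdabar=\{s\in\R_+^n\mid s_1+\dots+s_n\le\rho\}$ for $\rho\ge 0$, one has $\Lambda=\bigcup_{0\le\rho<1}\rho\Lambdabar$, so applying the linear map $L$ gives $A=\bigcup_{0\le\rho<1}\rho\Abar$; moreover $\Abar=L(\Lambdabar)$ is compact, convex, and contains $0$. The second fact is that $A$ is \emph{relatively open} in the cone $C:=L(\R_+^n)$: indeed $\Lambda$ is open in $\R_+^n$ and $L|_{\R_+^n}\co\R_+^n\to C$ is an open map by Lemma~\ref{orthant open}, so $A=L(\Lambda)$ is open in $C$; note also that $C$ is a cone and $A\subseteq\Abar\subseteq C$.

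Next, for a fixed $\beta\in A\ssminus\{0\}$ I would analyze the set $T=\{t\ge 0\mid t\beta\in\Abar\}$. Since $\Abar$ is compact, convex, and contains $0$, and $\beta\ne 0$, the set $T$ is a compact interval $[0,t^*]$ with $1\le t^*<\infty$ (it contains $t=1$ because $\beta\in\Abar$). Using $A=\bigcup_{\rho<1}\rho\Abar$ I would then show that $\{t\ge 0\mid t\beta\in A\}=[0,t^*)$: if $t<t^*$ then $t\beta=(t/t^*)(t^*\beta)\in(t/t^*)\Abar\subseteq A$; and $t^*\beta\notin A$, since otherwise $t^*\beta\in\rho\Abar$ for some $\rho<1$ (necessarily $\rho>0$, as $\beta\ne 0$), giving $(t^*/\rho)\beta\in\Abar$ with $t^*/\rho>t^*$ and contradicting the maximality of $t^*$. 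Hence the set of $t\ge 0$ with $t\beta\in A^\ext$ is $[0,t^*]\ssminus[0,t^*)=\{t^*\}$, so $\R_+\beta\cap A^\ext=\{t^*\beta\}$ is a single point (the origin is excluded since $0\in A$); this proves part (1) with $\beta^\ext:=t^*(\beta)\,\beta$, and it also records that $\beta^\ext\in A^\ext$.

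For part (2) it is enough to prove that $\beta\mapsto t^*(\beta)$ is continuous on $A\ssminus\{0\}$, since then $\beta^\ext=t^*(\beta)\beta$ is continuous as a product of continuous maps. I would check upper and lower semicontinuity separately. \emph{Upper semicontinuity:} if $\beta_m\to\beta$ in $A\ssminus\{0\}$ and $t^*(\beta_m)\ge t^*(\beta)+\eps$ for infinitely many $m$, then for those $m$ we have $(t^*(\beta)+\eps)\beta_m\in\Abar$ (because $T$ for $\beta_m$ equals $[0,t^*(\beta_m)]$); letting $m\to\infty$ and using that $\Abar$ is closed gives $(t^*(\beta)+\eps)\beta\in\Abar$, contradicting the maximality of $t^*(\beta)$. \emph{Lower semicontinuity:} fix $M$ with $0<M<t^*(\beta)$; then $M\beta\in A$, and since $A$ is relatively open in $C$ while $M\beta_m\in C$ and $M\beta_m\to M\beta$, we get $M\beta_m\in A$ for all large $m$, hence $M<t^*(\beta_m)$ for all large $m$; therefore $\liminf_m t^*(\beta_m)\ge M$, and letting $M\uparrow t^*(\beta)$ yields $\liminf_m t^*(\beta_m)\ge t^*(\beta)$.

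I expect the main obstacle to be the lower-semicontinuity step: it is the only place where one genuinely needs that $A$ itself (and not merely its closure $\Abar$) is relatively open in the cone $C$, which is the essential use of the description of $A$ as $L(\Lambda)$ with $\Lambda$ open in $\R_+^n$ together with Lemma~\ref{orthant open}. The rest is soft point-set topology of compact convex sets.
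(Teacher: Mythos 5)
Your proof is correct. Part (1) is essentially the paper's argument in a different parametrization: the paper fixes $\beta$, takes the minimal $t\in(0,1]$ with $\frac{1}{t}\beta\in\Abar$, and shows that any point of $A$ on the ray can be pushed strictly further out while staying in $\Abar$ (there by writing it as $\sum s_jv_j$ with $\sum s_j<1$, in your version via the identity $A=\bigcup_{\rho<1}\rho\Abar$ — the same fact). For part (2) you take a mildly but genuinely different route. The paper argues by sequences: it uses Lemma~\ref{orthant open} to conclude that $A$ is open in $\Abar$, hence $A^\ext$ is closed, so any subsequential limit of $\beta_m^\ext$ lies in $A^\ext$; it then pins the limit down using uniqueness of the intersection of $A^\ext$ with each ray, and upgrades to full convergence via compactness of $A^\ext$. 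You instead prove continuity of the scalar radial function $t^*$ directly, splitting into upper semicontinuity (from closedness of $\Abar$) and lower semicontinuity (from relative openness of $A$ in the cone $C=L(\R_+^n)$, again Lemma~\ref{orthant open}). The two arguments rest on exactly the same two inputs; yours avoids the subsequence extraction and the appeal to compactness of $A^\ext$, at the cost of having to check that $M\beta_m$ stays in the cone $C$ — which you do. Both are complete.
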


\begin{Remark} The notation ``ext" stands for ``extremal". \end{Remark}

\begin{proof}[Proof of Lemma~\ref{projection to Aext}]
Fix $\beta \in A \ssminus \{ 0 \}$.
Because the subset $\Abar$ of $\R^k$ is closed, bounded, and contains $\beta$,
the set
\begin{equation} \labell{set of t}
 \{ t \in (0,1] \ | \ \frac{1}{t} \beta \in \Abar \} 
\end{equation}
has a positive minimum; call it $t_\beta$.
Note that
$\frac{1}{t_\beta} \beta$ is in $\R_+ \cdot \beta \cap \Abar$.
So, for (1), it is enough to show that $\frac{1}{t_\beta} \beta$ is not in $A$.
Equivalently, it is enough to show that if $\frac{1}{t} \beta$ is in $A$,
then $t$ is not minimal in~\eqref{set of t}.  Suppose now that 
$\frac{1}{t} \beta$ is in $A$.  Write it as $\sum s_j v_j$ where 
the coefficients $s_j$ are nonnegative and where $\sum s_j < 1$.
Then $(\sum s_j) t$ is strictly smaller than $t$
and $\frac{1}{(\sum s_j) t} \beta$ is in $\Abar$,
so $t$ is not minimal in~\eqref{set of t}.  This proves (1).

To prove (2),
suppose that $\beta_n$ is a sequence of elements
of $A \ssminus \{ 0 \}$, let $\beta_n^\ext$ be their images
in $A^\ext$, suppose that the sequence $\beta_n$
converges to an element $\beta_\infty$ of $A \ssminus \{ 0 \}$,
and 
suppose that the sequence $\beta_n^\ext$ converges to 
an element $\beta'$ of $\R^k$.

Lemma~\ref{orthant open} implies that $A$ is open in $L(\R_+^n)$,
and hence in $\Abar$.  This, in turn, implies that $A^\ext$ is closed 
in $\Abar$, and hence in $\R^k$.   Thus, $\beta'$ must be in $A^\ext$.
In particular, $\beta'$ is nonzero.

Because $\beta_n^\ext \in \R_+ \beta_n$
and $\begin{CD} \beta_n @> \phantom{n} > n \to \infty > \beta_\infty \end{CD}$,
we have
$\begin{CD} 
\frac{\beta_n^\ext}{\| \beta_n^\ext \|}
@> \phantom{n} > n \to \infty > \frac{\beta_\infty}{\| \beta_\infty \|}.
\end{CD} $
Because 
$\begin{CD} \beta_n^\ext @> \phantom{n} > n \to \infty > \beta' \end{CD}$,
we have
$\begin{CD} 
\frac{\beta_n^\ext}{\| \beta_n^\ext \|}
@> \phantom{n} > n \to \infty > \frac{\beta'}{\| \beta' \|}. \end{CD}$
By uniqueness of the limit, we deduce that $\beta' \in \R_+ \beta_\infty$.
Because $A^\ext$ intersects every ray in at most one point,
we must have $\beta' = \beta^\ext_\infty$.

Now suppose that $\beta_n$ is any sequence of elements
of $A \ssminus \{ 0 \}$ that converges to an element $\beta_\infty$
of $A \ssminus \{ 0 \}$.  The above argument implies that
$\beta_\infty^\ext$ is the limit of every converging subsequence
of $\beta_n^\ext$.  Because $\beta_n^\ext$ are in $A^\ext$
and the set $A^\ext$ is compact, this implies that
the sequence $\beta_n^\ext$ converges to $\beta_\infty^\ext$.
\end{proof}

\begin{Lemma} \labell{exists section}
Let $L \co \R^n \to \R^k$ be a linear map.
Then $L|_\Lambda$ has a continuous section.
That is, there exists a continuous map $\sigma \co A \to \Lambda$,
where $A = L(\Lambda)$, such that $L \circ \sigma = \text{id}_A$.
\end{Lemma}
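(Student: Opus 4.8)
The plan is to define $\sigma$ by assigning to each $\beta \in A$ a ``center'' of the fibre over $\beta$ that depends continuously on $\beta$ and lies in $\Lambda$. For $\beta \in A = L(\Lambda)$ put $\ol{F}_\beta := L\Inv(\beta) \cap \Lambdabar$; this is a nonempty compact convex subset of $\R^n$, and by the definition of $A$ it meets $\Lambda$. First I would record that $\mathrm{relint}(\ol{F}_\beta) \subseteq \Lambda$. Indeed, $\ol{F}_\beta$ is the polytope cut out of the affine subspace $L\Inv(\beta)$ by the half-spaces $s_j \geq 0$ (for $j = 1, \dots, n$) and $s_1 + \dots + s_n \leq 1$; since $\ol{F}_\beta$ contains a point with $\sum_j s_j < 1$, the inequality $\sum_j s_j \leq 1$ does not hold with equality on all of $\ol{F}_\beta$, so every point of $\mathrm{relint}(\ol{F}_\beta)$ satisfies $\sum_j s_j < 1$ (and, still, $s_j \geq 0$), hence lies in $\Lambda$.

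Next I would show that $\beta \mapsto \ol{F}_\beta$ is Lipschitz from $A$ to the space of nonempty compact subsets of $\R^n$ equipped with the Hausdorff metric. Write $\ol{F}_\beta = \{ s \in \R^n : Ms \leq b(\beta) \}$, where $M$ is the fixed matrix whose rows encode the constraints $Ls = \beta$ (written as $Ls \leq \beta$ and $-Ls \leq -\beta$), $-s_j \leq 0$, and $\sum_j s_j \leq 1$, and where $b(\beta)$ depends affinely on $\beta$. For any $\beta_0, \beta_1 \in A$ both $\ol{F}_{\beta_0}$ and $\ol{F}_{\beta_1}$ are nonempty, so Hoffman's error bound for the polyhedral system with matrix $M$ provides a constant $c$, depending only on $L$, with $\mathrm{dist}(s, \ol{F}_{\beta_1}) \leq c \, \| (Ms - b(\beta_1))^+ \|$ for every $s$. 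Applying this to $s \in \ol{F}_{\beta_0}$, and using that $s \in \Lambdabar$ already satisfies all the constraints except possibly $Ls = \beta$ (so that $(Ms - b(\beta_1))^+$ is controlled by the two rows carrying $\pm Ls$, hence has norm at most a constant times $\| \beta_0 - \beta_1 \|$), gives $\mathrm{dist}(s, \ol{F}_{\beta_1}) \leq c' \| \beta_0 - \beta_1 \|$ uniformly in $s \in \ol{F}_{\beta_0}$, and symmetrically with $\beta_0$ and $\beta_1$ interchanged; hence the Hausdorff distance between $\ol{F}_{\beta_0}$ and $\ol{F}_{\beta_1}$ is at most $c' \| \beta_0 - \beta_1 \|$.

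Finally I would set $\sigma(\beta) := \mathrm{St}(\ol{F}_\beta)$, the Steiner point of $\ol{F}_\beta$. Using the classical properties of the Steiner point of a nonempty compact convex set --- it depends continuously on the set in the Hausdorff metric, and it always lies in the relative interior of the set --- the map $\sigma \co A \to \R^n$ is continuous; moreover $\sigma(\beta) \in \mathrm{relint}(\ol{F}_\beta) \subseteq \Lambda$ by the first paragraph, and $L\sigma(\beta) = \beta$ because $\sigma(\beta) \in \ol{F}_\beta \subseteq L\Inv(\beta)$. Thus $\sigma$ is the desired continuous section.

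The step I expect to be the real content is the continuity --- concretely, the lower semicontinuity --- of $\beta \mapsto \ol{F}_\beta$: the fibres can change dimension as $\beta$ moves within $A$, so one cannot argue by treating them as smoothly varying full-dimensional bodies, and this is what brings in a quantitative tool such as Hoffman's bound (an alternative is to invoke Michael's selection theorem for the slightly shrunk family $\{ s \in \ol{F}_\beta : \sum_j s_j \leq \frac{1}{2}( 1 + \min_{s' \in \ol{F}_\beta} \sum_j s'_j ) \}$, whose values already lie in $\Lambda$). A secondary, bookkeeping point is that $\Lambda$ contains its faces $s_j = 0$ but not the facet $\sum_j s_j = 1$, which is precisely why the relative-interior property of the Steiner point is the feature one uses; if one preferred an inductive argument, one could instead peel off that facet by means of the continuous map $\beta \mapsto \beta^\ext$ of Lemma~\ref{projection to Aext} and reduce to the closed standard $(n-1)$-simplex.
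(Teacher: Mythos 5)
Your proof is correct, but it follows a genuinely different route from the paper's. The paper builds the section in two stages: it first constructs a (piecewise affine, recursively defined) section $\ol{\sigma}$ of $L|_{\Lambdabar} \co \Lambdabar \to \Abar$ over the faces of the closed image polytope, then restricts it to the ``extremal'' set $A^{\ext}$ and rescales radially, $\sigma(\beta) = t_\beta\, \sigma^{\ext}(\beta^{\ext})$, using the continuity of $\beta \mapsto \beta^{\ext}$ from Lemma~\ref{projection to Aext}; the factor $t_\beta < 1$ is what forces the value into the open set $\Lambda$. You instead work fibrewise: you take the Steiner point of the fibre polytope $\ol{F}_\beta = L\Inv(\beta) \cap \Lambdabar$, using Hoffman's error bound to get Hausdorff--Lipschitz dependence of $\ol{F}_\beta$ on $\beta$, and the relative-interior property of the Steiner point (together with your correct observation that $\mathrm{relint}(\ol{F}_\beta) \subset \Lambda$, since the constraint $\sum_j s_j \leq 1$ is inactive somewhere on $\ol{F}_\beta$) to land in $\Lambda$. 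All the steps check out, including the degenerate case where $\ol{F}_\beta$ is a single point. The trade-off: the paper's argument is elementary and self-contained, importing nothing beyond Lemma~\ref{projection to Aext}, at the cost of an explicit face-by-face construction with arbitrary choices; yours is canonical and yields the stronger conclusion that the section is Lipschitz, but it leans on two nontrivial classical inputs --- Hoffman's bound and the facts that the Steiner point is Hausdorff-continuous and lies in the relative interior. The latter fact is the one a referee would most likely ask you to source; for polytopes it follows from the representation of the Steiner point as a convex combination of the vertices with strictly positive weights (the normalized external angles), which suffices here since every $\ol{F}_\beta$ is a polytope. Your diagnosis that lower semicontinuity of $\beta \mapsto \ol{F}_\beta$ is the real content is exactly right, and is the issue the paper sidesteps by never varying the fibres at all.
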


\begin{proof}
First, we show that the map
$$ L|_{\Lambdabar} \co \Lambdabar \to \Abar $$
has a continuous section:
$$ \sigmabar \co \Abar \to \Lambdabar \quad , \quad 
   L \circ \sigmabar = \text{id}_{\Abar}.$$
We define $\sigmabar$ recursively on the faces of $\Abar$.
First, we define it, arbitrarily, on vertices. 
Now, suppose that $Q$ is a face of $\Abar$
and that we already defined $\sigmabar$ on the relative boundary
$\del Q$ of $Q$.  We define $\sigmabar$ arbitrarily at a point $q$
in the relative interior of $Q$, and we extend it 
in an affine manner on segments connecting $q$ to $\del Q$.

Next, we restrict this section to the closed subset $A^\ext$
to obtain a continuous map
$$ \sigma^\ext \co A^\ext \to \Lambdabar $$
such that $L \circ \sigma^\ext = \text{id}_{A^\ext}$.

Finally, suppose that $\beta \in A \ssminus \{ 0 \}$
and let $\beta^\ext$ be its image in $A^\ext$.
Then $\beta = t_\beta \beta^\ext$.
A priori $t_\beta \in (0,1]$, but, 
because $\beta$ itself is not in $A^\ext$,
the number $t_\beta$ is strictly less than one.
We then define
$\sigma(\beta) = t_\beta \sigma^\ext(\beta^\ext)$.

Because $\sigma(\beta)$ is the product of an element
of $\Lambdabar$ with a positive number that is strictly 
less than one, it is in $\Lambda$. 
So $\sigma$ is a map from $A$ to $\Lambda$.

By Lemma~\ref{projection to Aext}, the map $\beta \mapsto \beta^\ext$
is continuous; it follows that $\beta \mapsto t_\beta$ is also continuous.  
Thus, $\sigma \co A \to \Lambda$ is continuous.

Finally, 
$L(\sigma(\beta)) = L( t_\beta \sigma^\ext(\beta^\ext) )
 = t_\beta \beta^\ext = \beta$.
So $L \circ \sigma = \text{id}_A$,
and $\sigma$ is a section of $L|_\Lambda \co \Lambda \to A$, as required.
\end{proof}

\begin{proof}[Proof of Proposition~\ref{orthant projection}]
By Lemma~\ref{orthant open}, 
the map $L|_{\R_+^n}$ is open as a map to its image.
Because $\Lambda$ is open in $\R_+^n$, this implies that
the map $L|_\Lambda$ is also open as a map to its image.

Because $\Lambda$ is convex and $L$ is linear, the level sets
of $L|_\Lambda$ are path connected.

By Lemma~\ref{exists section}, the map $L|_\Lambda \co \Lambda \to A$
has a continuous section.  This together with the connectedness of the 
level sets implies the weak path lifting property.
\end{proof}

\begin{Corollary}\labell{model local}
Let $v_1, \dots, v_n$ 
be vectors in $\R^k$. Let $B_\eps$ be an open $\eps$--ball
about $0$ in $\R^\ell$. Let
\[
\Lambda = \{s\in \R^n \mid s_j \geq 0 \text{ for } j=1, \dots, n, 
   \text{ and }\sum_{j=1}^n s_j < 1\}.
\]
Let
\[
A = \{\sum_{j=1}^n s_jv_j \mid s_j \geq 0 \text{ for } j=1, \dots, n, 
   \text{ and }\sum_{j=1}^n s_j < 1\},
\]
Then the map
\[
\varphi\co \Lambda \times B_\eps \to A \times B_\eps
\quad \text{given by} \quad
\left((s_1, \dots, s_n), \eta\right) \mapsto 
 \left(\sum_{j=1}^n s_jv_j, \eta\right) \]
is open, has the weak lifting property,
and its level sets are path connected.
\end{Corollary}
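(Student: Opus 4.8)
The plan is to deduce this from Proposition~\ref{orthant projection} by a straightforward product argument. Let $L \co \R^n \to \R^k$ be the linear map determined by $L(e_j) = v_j$, so that, in the notation of Proposition~\ref{orthant projection}, we have $A = L(\Lambda)$ and the map in question is $\varphi = (L|_\Lambda) \times \text{id}_{B_\eps}$. By Proposition~\ref{orthant projection}, the map $L|_\Lambda \co \Lambda \to A$ is open, has the weak path lifting property, and has path connected level sets; I will check that each of these three properties is inherited by the product of $L|_\Lambda$ with the identity map of $B_\eps$.

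First I would treat openness and the level sets, which are immediate. The sets $U \times V$, with $U$ open in $\Lambda$ and $V$ open in $B_\eps$, form a basis for the topology of $\Lambda \times B_\eps$, and $\varphi(U \times V) = L(U) \times V$ is open in $A \times B_\eps$ because $L|_\Lambda$ is open to its image; hence $\varphi$ is open. For a point $(a,\eta) \in A \times B_\eps$, the level set $\varphi\Inv(a,\eta) = (L|_\Lambda)\Inv(a) \times \{ \eta \}$ is homeomorphic to the level set $(L|_\Lambda)\Inv(a)$, hence path connected.

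For the weak path lifting property, fix $(s,\eta) \in \Lambda \times B_\eps$ and a path $\gamma \co [0,1] \to A \times B_\eps$ with $\gamma(0) = \varphi(s,\eta)$, and write $\gamma = (\gamma_1,\gamma_2)$ with $\gamma_1 \co [0,1] \to A$ and $\gamma_2 \co [0,1] \to B_\eps$; then $\gamma_1(0) = L(s)$ and $\gamma_2(0) = \eta$. Applying the weak path lifting property of $L|_\Lambda$ to the point $s$ and the path $\gamma_1$ produces a path $\tgamma_1 \co [0,1] \to \Lambda$ and a weakly monotone reparametrization $u \co [0,1] \to [0,1]$ with $u(0)=0$, $u(1)=1$, $\tgamma_1(0) = s$, and $L(\tgamma_1(t)) = \gamma_1(u(t))$ for all $t$. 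Setting $\tgamma(t) = (\tgamma_1(t), \gamma_2(u(t)))$, we get $\tgamma(0) = (s,\eta)$ and $\varphi(\tgamma(t)) = (\gamma_1(u(t)), \gamma_2(u(t))) = \gamma(u(t))$, so $\tgamma$ together with the reparametrization $u$ witnesses the weak path lifting property of $\varphi$. The only point that requires a moment's care is this last step: one must use a single weakly monotone reparametrization on both factors, which is possible precisely because the $B_\eps$ factor is carried by the identity and therefore imposes no constraint of its own --- any reparametrization supplied by the lifting of $\gamma_1$ can simply be applied to $\gamma_2$ as well. I expect no real obstacle beyond this bookkeeping.
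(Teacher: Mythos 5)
Your proposal is correct and takes essentially the same route as the paper: the paper's proof is a one-line reduction to the properties of $s \mapsto \sum s_j v_j$ established in Proposition~\ref{orthant projection}, and you carry out exactly that reduction, merely spelling out the product bookkeeping (including the correct observation that the single reparametrization coming from the lift of the first factor can be imposed on the identity factor as well).
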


\begin{proof}
These properties follow from the analogous
properties of the map $s \mapsto \sum s_j v_j$
from $\Lambda$ to $A$, which, 
in turn, were established in Proposition~\ref{orthant projection}.
\end{proof}

% ==========================================
\section{Contact momentum maps}
% ==========================================
\labell{sec:contact}

An \emph{exact symplectic manifold} is a symplectic manifold $(Q,\omega)$
such that the symplectic form $\omega$ is exact:  there exists
a one--form $\alpha$ such that $\omega = d\alpha$.  Let a torus $T$
act on an exact symplectic manifold $(Q,\omega)$ and, for every
Lie algebra element $X \in \ft$, let $X_Q$ be the corresponding
vector field on $Q$.  Suppose that $\omega = d\alpha$
and that $\alpha$ is $T$--invariant.
Then the map $\Phi \co Q \to \ft^*$ given by
\begin{equation}\labell{E:exact mm}
	\Phi^X(q) = \alpha(X_Q(q))
\end{equation} for all $X \in \ft$ and $q \in Q$
is a momentum map:  $d\Phi^X = -\iota(X_Q)\omega$.
An \emph{exact momentum map} is a momentum map
that has this form.

We recall from Section~\ref{sec:intro} that, if a torus $T$ 
acts on a manifold $M$ and preserves a contact one--form $\alpha$,
the \emph{$\alpha$--momentum map} is the map $\Psi_\alpha \co M \to \ft^*$
defined by $\Psi_\alpha^X = \iota(X_M) \alpha$,
and the \emph{contact momentum map} is the map
$\Psi \co M \times \Rplus \to \ft^*$ defined by
$\Psi(x,t) = t \Psi_\alpha(x)$.
The map $\Psi$ is an exact momentum map on the symplectization
$(M \times \Rplus, d(t\alpha) )$,
with the trivial extension of the $T$ action to $M \times \Rplus$,
and the map $\Psi_\alpha$ is a momentum map for the closed
(degenerate) two--form $d\alpha$ on $M$.

\begin{Lemma} \labell{exact mm}
Let a torus $T$ act on an exact symplectic manifold $(Q,\omega)$
with exact momentum map $\Phi \co Q \to \ft^*$.
Let $q$ be a point of $Q$.
Then every neighborhood of $q$ in $Q$ contains a smaller neighborhood $U$
such that
\begin{enumerate}
\item
The map $\Phi|_U \co U \to \Phi(U)$ is open, 
has the weak path lifting property (cf.~Definition~\ref{path lifting}),
and its level sets are path connected.
\item
There exists a convex polyhedral cone $C_q$ with vertex at the origin,
such that the set $\Phi(U)$ 
is a relatively open subset of $C_q$.
\end{enumerate}

\end{Lemma}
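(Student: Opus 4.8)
The plan is to reduce the lemma to the local normal form theorem (the Marle--Guillemin--Sternberg normal form) for a Hamiltonian action of a compact group, together with the results of Section~\ref{sec:linear}. Let $H = T_q$ be the stabilizer of $q$, let $\fh \subseteq \ft$ be its Lie algebra, and let $\fh^0 \subseteq \ft^*$ be the annihilator of $\fh$. Because $\Phi$ is exact, $\Phi^X(q) = \alpha(X_Q(q))$ (see~\eqref{E:exact mm}); since $X_Q(q) = 0$ for every $X \in \fh$, we get $\la \Phi(q), X \ra = 0$ for all $X \in \fh$, that is, $\Phi(q) \in \fh^0$. (This is the only place exactness is used, and it is exactly what will make the cone $C_q$ have its vertex at the origin.) Fix a splitting $\ft^* = \fh^0 \oplus \fh^*$. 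By the normal form theorem, some $T$-invariant neighborhood of the orbit $T \cdot q$ is equivariantly symplectomorphic to a $T$-invariant neighborhood of the zero section of the model $Y = T \times_H (\fh^0 \times V)$, where $V$ is the symplectic slice (a symplectic vector space with a linear symplectic $H$-action), and under this identification
$$ \Phi([t,\nu,v]) = \Phi(q) + \nu + \Phi_V(v), $$
where $\Phi_V \co V \to \fh^*$ is the (homogeneous quadratic) momentum map of the slice representation and $\nu$ ranges over a ball around $0$ in $\fh^0$. Decomposing $V$ into weight spaces, $\Phi_V(v) = \sum_j \|z_j\|^2 \beta_j$ up to a positive constant, where $\beta_1,\dots,\beta_m \in \fh^*$ are the weights, so $C_V := \Phi_V(V)$ is the convex polyhedral cone generated by $\beta_1,\dots,\beta_m$, with vertex at the origin.

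First I would choose $U$ to be a \emph{slice-type} neighborhood, deliberately \emph{not} $T$-invariant: pick a small submanifold $S \subseteq T$ through the identity transverse to the $H$-orbits, and, using an $H$-invariant norm on $V$, set
$$ U = \{ [s,\nu,v] \ : \ s \in S \cap W, \ \|\nu\| < \delta, \ \|v\| < \delta \}, $$
where $W$ is a neighborhood of the identity in $T$. Then $U$ is an open neighborhood of $q = [e,0,0]$ in $Y$, and by shrinking $W$ and $\delta$ it can be made to lie inside any prescribed neighborhood of $q$. Since $\Phi$ is $T$-invariant, $\Phi|_U$ does not depend on the $S$-coordinate, so
$$ \Phi(U) = \bigl( \Phi(q) + B^{\fh^0}_\delta \bigr) \times \Phi_V\bigl( B^V_\delta \bigr) $$
inside $\ft^* = \fh^0 \oplus \fh^*$.

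For part (2) I would take $C_q := \fh^0 \oplus C_V \subseteq \ft^*$. Being the sum of the linear subspace $\fh^0$ and the finitely generated cone $C_V$, this is a convex polyhedral cone with vertex at the origin, and it depends only on $q$. It contains $\Phi(U)$, and $\Phi(U)$ is relatively open in it: the factor $\Phi(q) + B^{\fh^0}_\delta$ is open in $\fh^0$ (here we use $\Phi(q) \in \fh^0$), and the factor $\Phi_V\bigl( B^V_\delta \bigr)$ equals $L(\delta^2 \Lambda)$ (up to a positive constant), where $L(s) = \sum_j s_j \beta_j$ and $\Lambda$ is the open unit simplex, which is relatively open in $C_V$ by Lemma~\ref{orthant open} and a relative neighborhood of $0$ in $C_V$ by Lemma~\ref{orthant loc open}; a product of relatively open sets is relatively open.

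For part (1), in the coordinates $(s,\nu,v) \in (S \cap W) \times B^{\fh^0}_\delta \times B^V_\delta$ the map $\Phi|_U$ is the composition of four maps: (i) the projection onto $B^{\fh^0}_\delta \times B^V_\delta$ (open, with contractible fibres, and lifting paths at a constant $s$); (ii) the map $\text{id} \times \mu$, where $\mu$ sends $(z_j)_j$ to $(\|z_j\|^2)_j$ and is a product of the maps $z \mapsto \|z\|^2$, each open, with path-connected (circle or point) fibres, and lifting paths radially, so $\mu$ inherits these properties; (iii) the map of Corollary~\ref{model local}, applied with the weights $\beta_j$ and with $\fh^0$ in the role of $\R^\ell$ (after an evident rescaling of the simplex and the ball, and a reordering of factors); and (iv) translation by $\Phi(q)$, a homeomorphism of $\ft^*$. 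Each of these is open, has path-connected level sets, and has the weak path lifting property, and all three properties pass to compositions (given a path inside a level set of the composite, lift it through the outer map, observe that the lift lands in the appropriate fibre of the inner map, and concatenate inside that fibre; the reparametrizations compose to a reparametrization). Hence $\Phi|_U$ has all three properties. The work here is essentially bookkeeping on top of the normal form theorem and Section~\ref{sec:linear}; I expect the only real difficulty to be one of care rather than of ideas, concentrated in two points: recording correctly that exactness forces $\Phi(q) \in \fh^0$ (without this, $\Phi(U)$ would cluster around a point of an affine subspace missing the origin and could not be relatively open in any cone with vertex at $0$), and insisting that $U$ be taken non-$T$-invariant so that it fits inside an arbitrary neighborhood of $q$ while still having the clean product image above.
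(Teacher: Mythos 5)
Your proposal is correct and follows essentially the same route as the paper's proof: the local normal form theorem for the Hamiltonian $T$-action, the observation that exactness forces $\Phi(q)$ into the annihilator $\fh^0$ (so the image is a genuine cone with vertex at the origin rather than a translate), and the openness/path-lifting/connectedness statements for the linear map on the simplex from Section~\ref{sec:linear} (Lemmas~\ref{orthant loc open}, \ref{orthant open} and Corollary~\ref{model local}), assembled by decomposing $\Phi|_U$ into maps that each have the three properties. The paper uses the neighborhood $U_\eps = (B_\eps^T\cdot H)\times_H B_\eps^{\C^n}\times B_\eps^{\fh^0}$ and a surjective parametrization by $B_\eps^T\times\Lambda\times(S^1)^n\times B_\eps^{\fh^0}$ where you use a local slice $S$ for $T/H$, but these are interchangeable bookkeeping choices.
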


\begin{proof}
Let $H$ denote the stabilizer of $q$.
Let $\fh$ denote its Lie algebra, $\fh^*$ the dual space,
and $\fh^0$ the annihilator of $\fh$ in $\ft^*$.
Fix an inner product on $\ft$ and use it to 
identify $\fh^*$ with a subspace of $\ft^*$.
By the local normal form theorem for Hamiltonian torus actions,
there exists an action of $H$ on $\C^n$,
with weights $\eta_1,\ldots,\eta_n \in \fh^*$,
there exists a $T$--invariant symplectic form  
on the model $Y := T \times_H \C^n \times \fh^0$
(with the left $T$ action),
and there exists an equivariant symplectomorphism
from an invariant neighborhood of $q$ in $Q$ 
to an open subset of $Y$
that carries the point $q$ to $[1,0,0]$
and that carries the momentum map $\Phi$ to the map 
\begin{equation} \labell{PhiY}
\Phi_Y([t,z,\nu]) \, = \, \Phi(q) \, + \,
\frac{|z_1|^2}{2} \eta_1 + \ldots + \frac{|z_n|^2}{2} \eta_n
 \, + \, \nu.
\end{equation}

The image of $\Phi_Y$ is the translation of the cone
$$ C_q = \{ s_1 \eta_1 + \ldots + s_n \eta_n + \nu \ | \ 
s_j \geq 0 \text{ for all } j,
\text{ and } \nu \in \fh^0 \} $$
by the element $\Phi(q)$ of $\ft^*$.
The cone $C_q$ is a convex polyhedral cone in $\ft^*$,
with vertex at the origin,
invariant under translations by elements of $\fh^0$.
By the formula~\eqref{E:exact mm} for the exact momentum map,
the element $\Phi(q)$ of $\ft^*$ 
belongs to the annihilator $\fh^0$ of $\fh$.
It follows that the image of $\Phi_Y$ is equal to $C_q$.

We need to show that
the restriction of $\Phi$ to arbitrarily small neighborhoods $U$
of $q$ in $Q$ satisfies (1) and (2).
By the local normal form theorem, it is enough 
to show these properties
for the restriction of $\Phi_Y$ 
to neighborhoods of $[1,0,0]$ in $Y$.

For $\eps>0$, let $B_\eps^T$ be the $\eps$--neighborhood
of the unit element in $T$ (with respect to some invariant metric);
so $B_\eps^T \cdot H$ is the $\eps$--neighborhood of $H$ in $T$;
let $B_\eps^{\C^n}$ be the $\eps$--ball about the origin in $\C^n$;
and let $B_\eps^{\fh^0}$ be the $\eps$--ball about the origin in $\fh^0$.
Let
$$U_\eps :=
  (B_\eps^T \cdot H) \times_H B_\eps^{\C^n} \times B_\eps^{\fh^0}.$$
Because every neighborhood of $[1,0,0]$ in $Y$ contains
a set of the form $U_\eps$
for some $\eps > 0$, it is enough to show that
\begin{enumerate}
\item[(1')]
The map $\Phi_Y|_{U_\eps} \co U_\eps \to \Phi_Y(U_\eps)$
is open, has the weak path lifting property,
and its level sets are path connected.
\item[(2')]
The set $\Phi_Y(U_\eps)$ is a neighborhood of $\Phi(q)$
in the cone $C_q$.
\end{enumerate}

The map $(z_1,\ldots,z_n) \mapsto (s_1,\ldots,s_n)$,
where $|z_j|^2 = \eps^2 s_j$,
takes $B_\eps^{\C^n}$ onto the set
$$ \Lambda := \{ s \in \R_+^n \ | \ s_1 + \ldots + s_n < 1 \} .$$
By~\eqref{PhiY}, 
$$ \Phi_Y(U_\eps) = \{ \Phi(q) + \frac{\eps^2}{2} \sum s_j \eta_j + \nu
 \ | \ (s_1,\ldots,s_n) \in \Lambda \text{ and } \nu \in B_\eps^{\fh^0} \}.$$
The affine isomorphism
$$ (\beta,\nu) \mapsto \Phi(q) + \frac{\eps^2}{2}\beta + \nu $$
of $\fh^* \times \fh^0$ with $\ft^*$
carries the cone $\{ \sum s_j \eta_j \ | \ s_j \geq 0
 \text{ for all } j \} \times \fh^0$
to the cone $C_q$ and the subset 
$A \times B_\eps^{\fh^0} $,
where
$$ A := \{ \sum s_j \eta_j \ | \ (s_1,\ldots,s_n) \in \Lambda \}, $$
to $\Phi_Y(U_\eps)$.
Because, by Lemma~\ref{orthant loc open}, the set $A$
is open in the cone
$\{ \sum s_j \eta_j \ | \ s_j \geq 0 \text{ for all } j  \}$,
this gives (2').

The map
$$ \begin{CD}
B_\eps^T \times \Lambda \times (S^1)^n \times B_\eps^{\fh^0}
 @>>>  U_\eps
\end{CD} $$
given by
$$ \begin{CD}
 \left( \lambda,(s_1,\ldots,s_n),(e^{i\theta_1},\ldots,e^{i\theta_n}),\nu 
 \right)
     \ \mapsto \ [\lambda,z,\nu] 
\qquad \text{ where } z_j = \eps \sqrt{s_j} e^{i\theta_j} 
\end{CD}$$
is continuous and onto.  
So, for (1'), it is enough to show that the composition
of this map with $\Phi_Y$, as a map to $\Phi_Y(U_\eps)$,
is open, has the weak path lifting property,
and its level sets are path connected.

This composition can be expressed as the composition of the map
\begin{equation} \labell{map1}
\begin{CD}
 B_\eps^T \times \Lambda \times (S^1)^n \times B_\eps^{\fh^0}
 @> \text{projection} >> \Lambda \times B_\eps^{\fh^0}
\end{CD}
\end{equation}
with the map
\begin{equation} \labell{map2}
\begin{CD}
\Lambda \times B_\eps^{\fh^0}
 @> (s,\nu) \mapsto (\sum s_j \eta_j,\nu) >>
A \times B_\eps^{\fh^0}
\end{CD}
\end{equation}
and the map
\begin{equation} \labell{map3}
\begin{CD}
A \times B_\eps^{\fh^0}
 @> (\beta,\nu) \mapsto \Phi(q) + \frac{\eps^2}{2}\beta+\nu >>
\Phi_Y(U_\eps).
\end{CD}
\end{equation}
So, for (1'), it is enough to check that each of the maps~\eqref{map1},
\eqref{map2}, and~\eqref{map3} is continuous, open, onto,
has the weak path lifting property, and its level sets are path connected.  
The map~\eqref{map1} is a fibration with path connected fibers,
and the map~\eqref{map3} is a homeomorphism,
so they both have the required properties.
The required properties of the map~\eqref{map2} 
follow from Corollary~\ref{model local}.

\end{proof}

\begin{Lemma}\labell{Psialpha local}
Let a torus $T$ act on a contact manifold $(M, \xi=\ker\alpha)$ 
with $\alpha$--momentum map $\Psi_\alpha\co M \to \ft^*$.  
Then, for every point $x$ of $M$ with $\Psi_\alpha(x)=0$, there exists 
an open neighborhood $U_x$ of $x$ in $M$ and a convex polyhedral
 cone $C_x$ 
in $\ft^*$ with vertex at the origin such that
\begin{itemize}
	\item The image $\Psi_\alpha(U_x)$ is an open subset of $C_x$.
	\item The map $\Psi_\alpha|_{U_x}\co U_x \to \Psi_\alpha(U_x)$ 
is open, has the weak path lifting property,
and its level sets are path connected.
\end{itemize}

\end{Lemma}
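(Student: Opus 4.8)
The plan is to reduce the statement to Lemma~\ref{exact mm} by restricting to a codimension one transversal to the Reeb vector field. Write $R$ for the Reeb vector field of $\alpha$, characterized by $\iota(R)d\alpha=0$ and $\alpha(R)=1$; since $\alpha$ is $T$--invariant, so is $R$, and $R$ is everywhere nonzero. The key observation is that $\Psi_\alpha$ is constant along the flow of $R$: for $X\in\ft$ the identity $d\Psi_\alpha^X=-\iota(X_M)d\alpha$ gives $\mathcal{L}_R\Psi_\alpha^X=\iota(R)\,d\Psi_\alpha^X=-\iota(R)\iota(X_M)d\alpha=\iota(X_M)\iota(R)d\alpha=0$. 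Combined with $T$--invariance, this shows that $\Psi_\alpha$ is identically $\Psi_\alpha(x)=0$ on the orbit $T\cdot x$, so that $T_y(T\cdot x)\subset\ker\alpha_y$ for every $y\in T\cdot x$; in particular $T\cdot x$ is everywhere transverse to $R$.

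Next I would produce a $T$--invariant codimension one submanifold $N$ of $M$ containing $x$ (hence containing $T\cdot x$) that is transverse to $R$. Fix a $T$--invariant Riemannian metric on $M$, and let $E$ be the $T$--invariant subbundle $\xi|_{T\cdot x}\cap(T(T\cdot x))^\perp$ of $TM|_{T\cdot x}$; it is a complement to $T(T\cdot x)$ inside $\xi|_{T\cdot x}$ and has rank $2n-\dim(T\cdot x)$. Take $N$ to be the image under the ($T$--equivariant) exponential map of a sufficiently small disc subbundle of $E$. Then $N$ is $T$--invariant, contains $T\cdot x$, has dimension $\dim(T\cdot x)+(2n-\dim(T\cdot x))=2n=\dim M-1$, and $T_yN=T_y(T\cdot x)\oplus E_y=\xi_y$ for $y\in T\cdot x$; after shrinking $N$ to a smaller neighborhood of the orbit it is transverse to $R$ at each of its points. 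Since $\ker(d\alpha)=\R R$ then meets $TN$ trivially, $d(\alpha|_N)=(d\alpha)|_N$ is a symplectic form on $N$ with $T$--invariant primitive $\alpha|_N$, and because $N$ is $T$--invariant we have $X_N=X_M|_N$ and hence $\iota(X_N)(\alpha|_N)=\Psi_\alpha^X|_N$; thus $\Psi_\alpha|_N$ is an exact momentum map on the exact symplectic manifold $(N,d(\alpha|_N))$. Applying Lemma~\ref{exact mm} at $x\in N$ yields a neighborhood $U'$ of $x$ in $N$ and a convex polyhedral cone $C_x$ in $\ft^*$ with vertex at the origin such that $\Psi_\alpha(U')$ is relatively open in $C_x$ and $\Psi_\alpha|_{U'}\co U'\to\Psi_\alpha(U')$ is open, has the weak path lifting property, and has path connected level sets. (Alternatively, $N$ can be extracted from the slice theorem for the $T$--action near $T\cdot x$.)

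Finally I would transport these conclusions from $N$ to $M$ via the Reeb flow. Since $R$ is $T$--invariant and transverse to $N$, for a small $\delta>0$ and after shrinking $U'$ the flow of $R$ defines a $T$--equivariant diffeomorphism $\Theta\co U'\times(-\delta,\delta)\to U_x$ onto an open neighborhood $U_x$ of $x$ in $M$, with $T$ acting trivially on the interval factor; by the Reeb invariance noted above, $\Psi_\alpha\circ\Theta$ is the composition of the projection $\mathrm{pr}\co U'\times(-\delta,\delta)\to U'$ with $\Psi_\alpha|_{U'}$. Now $\mathrm{pr}$ is open, surjective, and has contractible (hence path connected) fibers, so the composite $\Psi_\alpha|_{U_x}$ is open as a map to $\Psi_\alpha(U_x)=\Psi_\alpha(U')$, its level sets $(\Psi_\alpha|_{U'})\Inv(c)\times(-\delta,\delta)$ are path connected, and it has the weak path lifting property (lift a path first through $\Psi_\alpha|_{U'}$ into $U'$, then include that lift into $U'\times\{t_0\}$). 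Since $\Psi_\alpha(U_x)=\Psi_\alpha(U')$ is relatively open in $C_x$, this $U_x$ together with $C_x$ satisfies the conclusion of the lemma. The one step that is not essentially formal is the equivariant construction of the transversal $N$ (with $d\alpha$ still nondegenerate on it after shrinking); everything else follows by combining Lemma~\ref{exact mm} with the Reeb invariance of $\Psi_\alpha$.
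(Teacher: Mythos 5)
Your proposal is correct and follows essentially the same route as the paper: both proofs observe that the orbit through $x$ lies in the contact distribution and is therefore transverse to the Reeb field, construct an invariant codimension--one transversal to the Reeb field on which $d\alpha$ is nondegenerate, apply Lemma~\ref{exact mm} to that exact symplectic slice, and then use Reeb--invariance of $\Psi_\alpha$ to transport the conclusions to a full neighborhood in $M$ via the flow--box diffeomorphism. The only cosmetic difference is how the transversal is built (you use the Riemannian exponential of the subbundle $\xi|_{T\cdot x}\cap(T(T\cdot x))^\perp$, while the paper uses an $H$--equivariant chart and the model $T\times_H D$ for an $H$--invariant complement $W$), which does not change the substance of the argument.
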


\begin{proof}
Let $x$ be a point in $M$ with $\Psi_\alpha(x)=0$.
Let $R_\alpha$ be the Reeb vector field of the contact form $\alpha$.
Recall that it is defined by the conditions
$\iota(R_\alpha) d\alpha = 0$ and $\alpha(R_\alpha) = 1$.
The null space of $d\alpha|_{T_xM}$ is $\R R_\alpha(x)$.
Because $x$ is in the zero level set of the $\alpha$--momentum map,
the tangent space to its orbit, $T_x(T \cdot x)$, 
is contained in the contact distribution,
so $\R R_\alpha(x) \cap T_x(T \cdot x) = \{ 0 \}$.

Let $H$ be the stabilizer of $x$; it acts linearly on $T_xM$.
Let $W$ be an $H$--invariant subspace of $T_xM$ that is complementary 
to $\R R_\alpha(x) \oplus T_x (T \cdot x)$. 
Then we have an $H$--invariant decomposition
$$ T_xM = \R R_\alpha(x) \oplus T_x (T \cdot x) \oplus W, $$
and $d\alpha$ is nondegenerate on $T_x(T \cdot x) \oplus W$.

Let $\psi$ be an $H$--equivariant diffeomorphism
from a neighborhood of the origin in $T_xM$
to a neighborhood of $x$ in $M$
whose differential at $x$ is the identity map on $T_xM$.

Denote the Reeb trajectory of a point $q$ by $q^{(t)}$.
Thus, $q^{(0)} = q$ and $\frac{d}{dt} q^{(t)} = R_\alpha(q^{(t)})$.
Then, for the interval $I=(-\eps,\eps)$ with sufficiently small $\eps>0$
and for a sufficiently small neighborhood $D$ of the origin in $W$,
the formula $(t , [a,u]) \mapsto ( a \cdot \psi(u) )^{(t)} $
defines a diffeomorphism from $I \times (T \times_H D)$
to an open subset of $M$,
and the image of $\{ 0 \} \times (T \times_H D)$ under this diffeomorphism
is a submanifold of $M$ on which $d\alpha$ is nondegenerate.

We denote this submanifold by $Q$, the inclusion map by $i \co Q \to M$,
the symplectic form by $\omega_Q := i^*d\alpha$,
and the momentum map by $\Phi_Q := i^* \Psi_\alpha$.

The map $f \co I \times Q \to M$, given by $(t,q) \mapsto q^{(t)}$,
is a $T$--equivariant diffeomorphism to an invariant open subset of $M$,
and the pullback of $d\alpha$ through this diffeomorphism
is equal to the pullback of $\omega_Q$ with respect to the projection map
$I \times Q \to Q$.
It follows that the pullback $f^*\Psi_\alpha$ must have the form
$(t,q) \mapsto \Phi_Q(q)$.
The properties of $\Psi_\alpha$ then follow
from the corresponding properties of $\Phi_Q$, 
which are guaranteed by Lemma~\ref{exact mm}.
\end{proof}

\begin{Remark}
For a contact manifold with compact group action,
Frank Loose~\cite[Theorem~3]{loose} gives a local normal form
that describes the neighborhood of an orbit in the zero level set,
up to equivariant contactomorphism.
(Without a contact one--form, he defines a momentum map 
with values in $\fg^* \otimes L$, where $L$ is the line
bundle $TM/\xi$ over $M$.)
\end{Remark}

\begin{Lemma}\labell{Psi local}
	Let a torus $T$ act on a contact manifold $(M, \xi=\ker\alpha)$ 
	with $\alpha$--momentum map $\Psi_\alpha\co M \to \ft^*$
	and contact momentum map $\Psi\co M\times \Rplus \to \ft^*$.
	
	Choose a metric on $\ft^*$ and let $S(\ft^*)$ denote the unit sphere 
	in $\ft^*$. Define the map 
	$\Psibar\co (M\ssminus \Psi_\alpha\Inv(0))\times \Rplus \to S(\ft^*)$ 
	by $\Psibar=\Psi/\|\Psi\|$.
	
	Then, for any $(x, \lambda)\in M \times \Rplus$ with $\Psi_\alpha(x)\neq 0$, 
	every neighborhood of $(x, \lambda)$ in $M \times \Rplus$ 
	contains a smaller neighborhood $U$ such that 
	\begin{itemize}
		\item The map $\Psi|_U$ is convex.  
		\item The image $\Psi(U)$ is a relatively open subset in a convex polyhedral 
		cone with vertex at the origin.
		\item The map $\Psibar|_U$ 
		is (defined and) open as a map to its image.
	\end{itemize}
\end{Lemma}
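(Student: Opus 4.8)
The plan is to note that $\Psi$ is precisely an exact momentum map on the symplectization $(M \times \Rplus, d(t\alpha))$, for the trivial extension of the $T$--action, so that Lemma~\ref{exact mm} applies verbatim to the point $q = (x,\lambda)$. Inside any prescribed neighborhood of $(x,\lambda)$ it produces an open neighborhood $U_0$ such that $\Psi|_{U_0} \co U_0 \to \Psi(U_0)$ is open, has the weak path lifting property, and has path connected level sets, and such that $\Psi(U_0)$ is a relatively open subset of a convex polyhedral cone $C_q$ with vertex at the origin. This already settles the middle bullet for $U_0$, but it gives neither convexity of $\Psi|_{U_0}$ (which needs a convex image) nor openness of $\Psibar$ (which needs $\Psi$ nonvanishing, and a cone as target); the fix is to shrink $U_0$ a little.

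Since $\Psi_\alpha(x) \neq 0$ and $\lambda > 0$, the value $\Psi(q) = \lambda \Psi_\alpha(x)$ is nonzero. I would choose a convex open subset $\Omega$ of $\ft^*$ containing $\Psi(q)$ with $0 \notin \Omega$ and small enough that $\Omega \cap C_q \subseteq \Psi(U_0)$ (possible because $\Psi(U_0)$ is relatively open in $C_q$; a small ball about $\Psi(q)$ works), and set $U = U_0 \cap \Psi\Inv(\Omega)$. A short set-chase gives $\Psi(U) = \Psi(U_0) \cap \Omega = C_q \cap \Omega$, which is convex (an intersection of two convex sets) and relatively open in $C_q$; this is the middle bullet. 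Moreover $\Psi|_U$ still has the weak path lifting property and path connected level sets: a path in $\Psi(U)$ starting at an image point lifts, via the weak path lifting property of $\Psi|_{U_0}$, to a path whose values stay in $\Omega$, hence which stays in $\Psi\Inv(\Omega) \cap U_0 = U$; and since every value attained on $U$ lies in $\Omega$, each level set of $\Psi|_U$ is a full level set of $\Psi|_{U_0}$, thus path connected.

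For the first bullet I would use the elementary principle (the opening paragraph of the proof of Lemma~\ref{usage of path lifting}, here with $E = \ft^*$): a continuous map with the weak path lifting property, path connected level sets, and \emph{convex} image is a convex map. Concretely, given $y_0, y_1 \in U$, lift the segment $[\Psi(y_0),\Psi(y_1)] \subset \Psi(U)$ starting at $y_0$ to obtain a path whose composition with $\Psi$ is a weakly monotone reparametrization of that segment, and then concatenate a path inside the level set $\Psi\Inv(\Psi(y_1)) \cap U$ to reach $y_1$. For the last bullet, $\Psi$ is nonvanishing on $U$ because $0 \notin \Omega \supseteq \Psi(U)$, so $\Psibar = \Psi/\|\Psi\|$ is defined on $U$; and $\Psi|_U \co U \to C_q$ is an open map, since for $V \subset U$ open the set $\Psi(V)$ is open in $\Psi(U_0)$ and hence in $C_q$, while $\Psi(U)$ is open in $C_q$. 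As $\Rplus \cdot C_q = C_q$, Lemma~\ref{to cone} then yields that $\Psibar|_U \co U \to C_q \cap S(\ft^*)$ is open, hence open as a map to its image.

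I do not expect a serious obstacle here; the one point requiring care is that Lemma~\ref{exact mm} only provides \emph{relative openness} of $\Psi(U_0)$ in $C_q$, not convexity, so one must intersect with a convex set $\Omega$ and arrange that the image of the intersection is the intersection of the images — which is exactly why the shrinking is done \emph{within} the image already attained, i.e.\ with $\Omega \cap C_q \subseteq \Psi(U_0)$.
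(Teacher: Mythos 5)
Your proposal is correct and follows essentially the same route as the paper: apply Lemma~\ref{exact mm} to the exact momentum map $\Psi$ on the symplectization, shrink by intersecting with the preimage of a convex open set avoiding the origin so that the image becomes $B \cap C_{(x,\lambda)}$, deduce convexity of $\Psi|_U$ from the weak path lifting property plus connected level sets plus convex image, and get openness of $\Psibar|_U$ from Lemma~\ref{to cone}. The only cosmetic difference is that you spell out explicitly why the weak path lifting property and level-set connectedness survive the shrinking, which the paper asserts without detail.
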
	

\begin{proof}
Let $x\in M\ssminus \Psi_\alpha\Inv(0)$. 
Because $\Psi$ is an exact momentum map, 
by Lemma~\ref{exact mm}, every neighborhood of $(x, \lambda)$ 
in $M\times \Rplus$ contains a smaller neighborhood $U'$ 
such that the map $\Psi|_{U'} \co U' \to \Psi(U')$ is open, 
has the weak path lifting property, its level sets are path connected,
and its image has the form $\calO \cap C_{(x,\lambda)}$ 
where $\calO$ is an open neighborhood of $\Psi(x,\lambda)$
and $C_{(x,\lambda)}$ is a convex polyhedral cone 
with vertex at the origin. We may assume $\calO$ does not contain the origin.
Let $B$ be a convex open neighborhood of $\Psi(x,\lambda)$
that is contained in $\calO$,
and let $U$ be the intersection of $U'$ with the preimage of $B$.
Then $\Psi|_U \co U \to \Psi(U)$ still has the weak path lifting
property and its level sets are path connected, but, additionally,
its image, being equal to $B \cap C_{(x,\lambda)}$, is convex.
These properties imply that $\Psi|_U$ is convex. 
Since $\Psi|_U\co U \to \Psi(U)$ is still open 
and the image $\Psi(U) = B \cap C_{(x, \lambda)}$ is open 
in $C_{(x, \lambda)}$, 
the map 
$\Psi|_U\co U \to C_{(x, \lambda)}$ is a continuous nonvanishing open map. 
By Lemma \ref{to cone}, the map 
$\Psibar|_U\co U \to C_{(x, \lambda)} \cap S(\ft^*)$ is open. 
This implies that $\Psibar|_U$ is open as a map to its image.        	
\end{proof}

\begin{Lemma} \labell{convex polyhedral}
Let $C$ be a closed convex cone in $\R^n$ with vertex at the origin.
Suppose that for every $w$ in $C \ssminus \{ 0 \}$ 
there exists a neighborhood $U_w$ in $\R^n$ 
and a convex polyhedral cone $C_w$ with vertex at the origin
such that $U_w \cap C = U_w \cap C_w$.
Then $C$ is a convex polyhedral cone with vertex at the origin.
\end{Lemma}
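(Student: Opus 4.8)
The plan is to slice $C$ by a hyperplane transverse to all of its rays, thereby reducing to the statement that a \emph{compact} convex set which is locally polyhedral is a polytope, and then to prove that by counting extreme points. First I would reduce to the case that $C$ is pointed, i.e.\ $C \cap (-C) = \{ 0 \}$. Writing $L := C \cap (-C)$ for the lineality space of $C$, one has the standard decomposition $C = L \oplus C'$ with $C' := C \cap L^\perp$ a pointed closed convex cone. For $w \in C' \ssminus \{ 0 \}$ we have $w \in C \ssminus \{ 0 \}$, and intersecting the given identity $U_w \cap C = U_w \cap C_w$ with $L^\perp$ shows that $C'$ satisfies the same hypothesis, with the polyhedral cones $C_w \cap L^\perp$ now playing the role of $C_w$. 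Since a sum of polyhedral cones is polyhedral and the intersection of a polyhedral cone with a subspace is polyhedral, $C$ is polyhedral if and only if $C'$ is. So we may assume $C$ is pointed; we may also assume $C \neq \{ 0 \}$, the zero case being trivial.

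Because $C$ is a pointed closed convex cone, there is a linear functional $u$ with $\la u , x \ra > 0$ for all $x \in C \ssminus \{ 0 \}$ (equivalently, $u$ lies in the interior of the dual cone of $C$, an interior which is nonempty precisely because $C$ is pointed). By compactness of $C \cap S^{n-1}$ there is an $m > 0$ with $\la u , x \ra \geq m \| x \|$ on all of $C$, so the cross-section
$$ P := \{ x \in C \mid \la u , x \ra = 1 \} $$
is a nonempty compact convex set and $C = \R_{\geq 0} \cdot P$. For each $w \in P$ the hypothesis provides a neighborhood $U_w$ and a polyhedral cone $C_w$ with $U_w \cap C = U_w \cap C_w$; intersecting with the affine hyperplane $\{ \la u , \cdot \ra = 1 \}$ shows that on $U_w$ the set $P$ coincides with the polyhedron $Q_w := C_w \cap \{ \la u , \cdot \ra = 1 \}$. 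Thus every point of $P$ has a neighborhood on which $P$ agrees with a polyhedron.

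Now I would cover the compact set $P$ by finitely many such neighborhoods $U_{w_1}, \ldots, U_{w_N}$, with polyhedra $Q_1, \ldots, Q_N$ satisfying $U_{w_i} \cap P = U_{w_i} \cap Q_i$. If $v$ is an extreme point of $P$, then $v \in U_{w_i}$ for some $i$, and $v$ must be a vertex of $Q_i$: otherwise $v = \half(a+b)$ for some $a \neq b$ in $Q_i$, and then for sufficiently small $\eps > 0$ the points $v \pm \eps(a-v)$ lie in $U_{w_i} \cap Q_i = U_{w_i} \cap P \subset P$, contradicting the extremality of $v$ in $P$. Hence $P$ has at most $\sum_{i=1}^N \#\{ \text{vertices of } Q_i \} < \infty$ extreme points, so by Minkowski's theorem $P$ is the convex hull of finitely many points, that is, a polytope; therefore $C = \R_{\geq 0} \cdot P$ is a finitely generated cone, hence a polyhedral cone with vertex at the origin by the Minkowski--Weyl theorem. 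Undoing the reduction of the first step then completes the proof.

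The step I expect to be the main obstacle is the bookkeeping around the reduction and the cross-section: verifying carefully that the local identity $U_w \cap C = U_w \cap C_w$ passes both to $L^\perp$ and to the hyperplane $\{ \la u , \cdot \ra = 1 \}$, keeping track of the appropriate polyhedral cone or polyhedron in the role of $C_w$ at each stage, and confirming that pointedness of $C$ is exactly what forces $P$ to be compact. Once that is in place, the extreme-point count and the appeals to Minkowski's theorem and to the Minkowski--Weyl theorem are routine.
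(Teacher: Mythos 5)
Your argument is correct, but it takes a genuinely different route from the paper's. The paper works entirely with the half--space (facet) description: after arranging that each $U_w$ is convex, $\Rplus$--invariant, and meets the boundary of every half--space $H_w^j$ cutting out $C_w$, it covers $C \cap S^{n-1}$ by finitely many such neighborhoods, shows by a segment argument that $C$ lies in each of the finitely many half--spaces (slide along $[c,c']$ toward a point $c' \in U_w \cap C \cap \del H_w^j$), and then concludes equality with the finite intersection by an open--and--closed--in--a--connected--set argument. You instead pass to the vertex description: split off the lineality space to reduce to a pointed cone, take a compact cross--section $P$ using a functional in the interior of the dual cone, observe that $P$ is locally polyhedral, show every extreme point of $P$ is a vertex of one of finitely many local polyhedra $Q_i$ (your midpoint argument for this is sound, since $U_{w_i}$ is open and hence a neighborhood of $v$ itself, and $U_{w_i}\cap Q_i = U_{w_i}\cap P$), and then invoke Minkowski's theorem and Minkowski--Weyl. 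The paper's proof is more self--contained --- it needs no pointedness reduction, no duality fact about interiors of dual cones, and no classical structure theorems --- and it treats non--pointed cones with no extra case analysis. Your proof requires more machinery in the setup but reduces the statement to the familiar fact that a compact convex set that is locally polyhedral has finitely many extreme points and is therefore a polytope, which some readers may find more transparent. Both are complete.
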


\begin{proof}
After possibly shrinking the neighborhoods $U_w$, we may assume
that these neighborhoods are convex.  
We may also assume that $\Rplus \cdot U_w = U_w$.
Otherwise, we replace $U_w$ by $\Rplus \cdot U_w$;
it remains an open set that satisfies $U_w \cap C = U_w \cap C_w$.

Recall that a convex polyhedral cone with vertex at the origin
is a finite intersection of closed half--spaces whose boundaries
contain the origin.

For $w \in C$,
let $H_w^j$, for $1 \leq j \leq N_w$, be closed half--spaces
whose boundaries contain the origin
and such that $C_w = H_w^1 \cap \ldots \cap H_w^{N_w}$.
(It is possible that $N_w=0$ and $C_w = \R^n$.)
Let $\del H_w^j$ denote the boundary of $H_w^j$.
We may assume that $U_w \cap C \cap \del H_w^j \neq \varnothing$
for every $1 \leq j \leq N_w$.  Otherwise, we replace
the polyhedral cone $C_w$ by the intersection of those $H_w^j$ 
that do satisfy $U_w \cap C \cap \del H_w^j \neq \varnothing$;
this intersection is a (possibly larger) cone 
that still satisfies $U_w \cap C = U_w \cap C_w$.

Because $C \cap S^{n-1}$ is compact,
we may choose a finite set of points 
$W = \{ w_1,\ldots,w_N \} \subset C \cap S^{n-1}$
such that $U_{w_1} \cup \ldots \cup U_{w_N}$ 
contains $C \cap S^{n-1}$.
We claim that 
\begin{equation} \labell{finite intersection}
 C = \bigcap\limits_{\substack{w \in W \\ 1 \leq j \leq N_w}} 
     H_w^j .
\end{equation}

Fix $w \in W$ and $1 \leq j \leq N_w$.
Let $c$ be a point in $C$.
Let $c'$ be a point in $U_w \cap C \cap \del H_w^j$.
Because $C$ is convex, the segment $[c,c']$
is contained in $C$.  Because $U_w$ is open,
interior points of the segment that are sufficiently 
close to $c'$ are in $U_w$. Let $c''$ be such a point.
Because $c''$ is in $U_w \cap C$, it is in $H_w^j$.
Finally, because $c'$ is on the boundary of the half--space
and $c''$ is in the half--space, $c$ is also in the half--space.
Thus, $C \subset H_w^j$.

Denote the right hand side of~\eqref{finite intersection}
by $C_\text{RHS}$. We have shown that $C \subset C_\text{RHS}$.
Because $C$ is closed in $\R^n$, it is closed in $C_\text{RHS}$.
Because $C$ is the union of the sets $U_w \cap C$ for $w \in W$, 
and because $U_w \cap C$ is open in $C_w$, hence in $C_\text{RHS}$,
we deduce that $C$ is open in $C_\text{RHS}$.
Because $C_\text{RHS}$ is convex, hence connected,
and $C$ is a nonempty subset that is both closed and open,
$C$ is equal to $C_\text{RHS}$.
\end{proof}

The ``convexity package" for contact momentum maps
is given in parts (2), (3), (4), and (6) of the following theorem.

\begin{Theorem} \labell{contact main} 
Let a torus $T$ act on a cooriented compact connected contact manifold $M$ 
with contact momentum map $\Psi \co M\times \Rplus \to \ft^*$.
Assume that the action is effective and the torus has dimension
greater than 2.  Then
\begin{enumerate}
\item
Let $y_0$ and $y_1$ be any two points in $M \times \Rplus$.
\begin{itemize}
\item[\dash]
If the action is transverse ($0 \not\in \image\Psi$),
assume that 
the origin is not contained in the segment $[\Psi(y_0), \Psi(y_1)]$.
\item[\dash]
If the action is not transverse ($0 \in \image\Psi$),
assume that $\Psi(y_0)$ and $\Psi(y_1)$ are not both zero.
\end{itemize}
Then there exists a path $\gamma \co [0,1] \to M \times \Rplus$
such that $\gamma(0) = y_0$ and $\gamma(1)=y_1$
and such that $\Psi \circ \gamma \co [0,1] \to \ft^*$
is a weakly monotone parametrization of the
(possibly degenerate) segment $[\Psi(y_0),\Psi(y_1)]$.
\item The momentum map $\Psi$ is open as a map to its image. 	
\end{enumerate}
Consequently,
\begin{enumerate}
\setcounter{enumi}{2}	
\item The momentum cone $C(\Psi)$ is convex.
\item The nonzero level sets, $\Psi\Inv(\mu)$, for $\mu\neq 0$, 
are connected.
\item Let $A$ be a convex subset of $\ft^*$. 
\begin{itemize}
\item[\dash]
If the action is transverse, suppose that $0 \not\in A$.
\item[\dash]
If the action is not transverse, suppose that $A \neq \{ 0 \}$.
\end{itemize}
Then the preimage $\Psi\Inv(A)$ is connected. 
\end{enumerate}
Moreover,
\begin{enumerate}
\setcounter{enumi}{5}	
\item
The momentum cone $C(\Psi)$ is a convex polyhedral cone.
\end{enumerate}
\end{Theorem}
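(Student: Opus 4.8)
The plan is to combine the local convexity and openness data of Lemmas~\ref{Psialpha local} and~\ref{Psi local} with the local-to-global statements of Sections~\ref{sec:nonvanishing}--\ref{sec:excision}. Fix a $T$--invariant contact one--form $\alpha$ on $M$ (obtained by averaging), put $\varphi := \Psi_\alpha \co M \to \ft^*$, so that $\Psi(x,t)=t\varphi(x)$, and recall that the topological conclusions of the theorem are independent of this choice. I would split into the transverse case, $0 \notin \image\varphi$, and the non-transverse case, $0 \in \image\varphi$, applying Proposition~\ref{stretch} to $\varphi$ in the first and Proposition~\ref{with zero'} to $\varphi$ in the second.

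Every hypothesis of those two propositions is supplied directly by Lemmas~\ref{Psialpha local} and~\ref{Psi local} except one, which is where ``$\dim T>2$'' and effectiveness actually enter, and which I expect to be the main point to check: that $\varphi(M)$ (transverse case), respectively each local cone $C_x$ at a point $x$ with $\varphi(x)=0$ (non-transverse case), is not contained in a two--dimensional subspace of $\ft^*$. In the transverse case, if $\varphi(M)$ lay in a $2$--plane $V$, then since $\dim T>2$ the annihilator $V^0\subset\ft$ would be nonzero, and for $X\in V^0$ we would have $\varphi^X=\alpha(X_M)\equiv 0$, hence $d\varphi^X=-\iota(X_M)\,d\alpha=0$; thus $X_M$ would lie both in $\ker\alpha$ and in the null space of $d\alpha$, which is spanned by the Reeb field $R_\alpha\notin\ker\alpha$, forcing $X_M\equiv 0$ and contradicting effectiveness. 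In the non-transverse case, at $x$ with $\varphi(x)=0$ the cone $C_x$ produced in Lemma~\ref{Psialpha local} (built via Lemma~\ref{exact mm} on the transverse submanifold through $x$) contains the subspace $\fh^0$ annihilating the stabilizer algebra $\fh$ and, modulo $\fh^0$, the convex cone on the weights of the stabilizer acting on the symplectic slice; effectiveness makes those weights span $\fh^*$, so the span of $C_x$ is all of $\ft^*$, which has dimension $>2$.

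With that in hand: in the transverse case $\varphi$ is nonvanishing, Lemma~\ref{Psi local} gives, near every point of $M\times\Rplus$, a neighborhood on which $\Psi$ is convex and $\Psi/\|\Psi\|$ is open to its image, and $M$ is compact and connected, so Proposition~\ref{stretch} yields parts (1) and (2). For part (6) in this case I would invoke Lemma~\ref{convex polyhedral}: $C(\Psi)=\{0\}\cup\Rplus\cdot\varphi(M)$ is closed because $\varphi(M)$ is compact and avoids the origin, it is convex by part (3) below, and for $w\in C(\Psi)\ssminus\{0\}$, writing $w=\Psi(x,\lambda)$ with $\varphi(x)\neq 0$ and combining Lemma~\ref{Psi local} (image of a small neighborhood is relatively open in a convex polyhedral cone $C_{(x,\lambda)}$) with part (2) (openness of $\Psi$ onto its image), one obtains a neighborhood $U_w$ of $w$ with $U_w\cap C(\Psi)=U_w\cap C_{(x,\lambda)}$. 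In the non-transverse case, Proposition~\ref{with zero'} applies to $\varphi$ (its hypotheses at zeros of $\varphi$ coming from Lemma~\ref{Psialpha local} together with the spanning fact, and at points with $\varphi\neq 0$ from Lemma~\ref{Psi local}) and yields parts (1) and (2) together with the statement that $C_x=C(\Psi)$ for every $x\in\varphi\Inv(0)$; since $C_x$ is a convex polyhedral cone by Lemma~\ref{Psialpha local}, part (6) follows at once.

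Finally, parts (3), (4) and (5) are formal consequences of part (1), as elsewhere in the paper. For (3): given $w_0,w_1\in C(\Psi)$, if $0\notin[w_0,w_1]$ then neither $w_i$ is the origin, so $w_i=\Psi(y_i)$ and part (1) produces a path whose $\Psi$--image is the whole segment $[w_0,w_1]$, which therefore lies in $C(\Psi)$; if $0\in[w_0,w_1]$, split the segment at the origin and use that $C(\Psi)$ is a cone. For (4): for $\mu\neq 0$ and $y_0,y_1\in\Psi\Inv(\mu)$, part (1) gives a path $\gamma$ with $\Psi\circ\gamma$ a weakly monotone parametrization of the degenerate segment $\{\mu\}$, hence constant, so $\gamma$ stays in $\Psi\Inv(\mu)$. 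For (5): for $y_0,y_1\in\Psi\Inv(A)$ with $0\notin[\Psi(y_0),\Psi(y_1)]$, convexity of $A$ keeps the path of part (1) inside $\Psi\Inv(A)$; in the non-transverse case, when $0\in[\Psi(y_0),\Psi(y_1)]$ (in particular when both images vanish), one first connects each $y_i$ to the $\Psi$--preimage of a nonzero point of $A$ lying in the momentum cone and then concatenates. This gives all the asserted parts of Theorem~\ref{contact main}.
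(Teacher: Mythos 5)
Your proposal is correct and follows the same architecture as the paper's proof: local data from Lemmas~\ref{Psialpha local} and~\ref{Psi local} fed into Proposition~\ref{stretch} in the transverse case and Proposition~\ref{with zero'} in the nontransverse case, part (6) obtained from Lemma~\ref{convex polyhedral} (transverse) or from the conclusion $C_x=C$ of Proposition~\ref{with zero'} (nontransverse), and parts (3)--(5) deduced formally from part (1). The one place where you genuinely diverge is the step where effectiveness and $\dim T>2$ enter. The paper uses the principal orbit type theorem: the action is free on a dense open set, so $\Psi$ is a submersion there, and hence the $\Psi_\alpha$--image of \emph{every} open subset of $M$ fails to lie in a two--dimensional subspace; this one uniform statement supplies both the hypothesis of Proposition~\ref{stretch} and the condition on the cones $C_x$. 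You instead argue pointwise: in the transverse case via the Reeb field (if $\varphi^X\equiv 0$ then $X_M$ lies pointwise in $\ker\alpha\cap\R R_\alpha=\{0\}$), and in the nontransverse case by showing that the weights of the slice representation span $\fh^*$, so that $C_x \supset \fh^0$ together with the weight cone spans $\ft^*$. Both arguments are sound; the Reeb--field argument is arguably more elementary than the paper's, while the weight argument needs the (true, but worth stating) fact that effectiveness of a torus action on a \emph{connected} manifold forces effectiveness near every orbit, which is what makes the slice weights at $x$ span $\fh^*$. One small caveat on part (5): your concatenation step implicitly assumes $A$ contains a nonzero point of $\image\Psi$, which can fail only when $\Psi(y_0)=\Psi(y_1)=0$ and $A\cap\image\Psi=\{0\}$; this edge case is equally glossed over by the paper's ``follows easily from Part (1)''.
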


\begin{proof}
Parts (3), (4), and (5) of the theorem follow easily from Part (1).
We proceed to prove Parts (1), (2), and (6). 

Write the contact momentum map $\Psi\co M \times \Rplus \to \ft^*$ 
as $\Psi(x, t)=t\Psi_\alpha(x)$ where $\alpha$ is an invariant 
contact one--form 
and $\Psi_\alpha^X=\alpha(X_M)$ is the $\alpha$--momentum map. 
Choose a metric on $\ft^*$, denote the unit sphere by $S(\ft^*)$,
and let $\Psibar := \Psi / \| \Psi \| \co 
\left( M \ssminus \Psi_\alpha\Inv(0) \right) \times \Rplus \to S(\ft^*)$.

Because the $T$ action is effective on $M$, it is effective
on $M \times \Rplus$.  Because $M$ is connected, so is $M\times \Rplus$.
Because $T$ is compact and abelian, by the principal orbit type theorem, 
there exists an invariant open dense subset of $M\times \Rplus$ 
on which the action is free. Wherever the action is free, the momentum map 
$\Psi$ is a submersion. Hence, the $\Psi$--image of any open subset 
of $M \times \Rplus$ is not contained in a proper subspace of $\ft^*$.
Because $\dim T >2$, and because $\Psi(x,t) = t \Psi_\alpha(x)$,
this implies that the $\Psi_\alpha$--image of any open subset of $M$
is not contained in a two dimensional subspace of $\ft^*$.

\medskip

Suppose that the action is transverse ($ 0 \not\in \Psi_\alpha(M))$.

Let $(x,\lambda)$ be a point in $M \times \Rplus$. 
By Lemma~\ref{Psi local}, there exists a neighborhood $U$ such 
that the map $\Psi|_U$ is convex, $\Psi(U)$ is a relatively open subset 
in a convex polyhedral cone $C_{(x, \lambda)}$, and the map 
$\Psibar|_U$ is open as a map to its image. 
Parts (1) and (2) of the theorem follow from Proposition~\ref{stretch},
applied to the space $M$ and the map $\Psi_\alpha$.

Because $M$ is compact and $0 \not\in\image \Psi_\alpha$, 
the momentum map $\Psi$ is proper as a map to $\ft^* \ssminus \{ 0 \}$,
so its image is closed in $\ft^* \ssminus \{ 0 \}$.
Part (6) of the theorem then follows from Lemma~\ref{convex polyhedral},
applied to the momentum cone $C(\Psi) = \{ 0 \} \cup \image \Psi$.

\medskip

Now suppose that the action is not transverse 
$(0\in \text{image}\Psi_\alpha)$.

\begin{itemize}
\item
Let $x$ be a point of $M$ with $\Psi_\alpha(x) = 0$.
By Lemma~\ref{Psialpha local},
there exists a neighborhood $U_x$ of $x$ in $M$
and a convex polyhedral 
cone $C_x$ in $\ft^*$ with vertex at the origin such that
\begin{itemize}
\item[\dash]
the image $\Psi_\alpha(U_x)$ is an open subset of $C_x$;
\item[\dash]
the map $\Psi_\alpha|_{U_x} \co U_x \to \Psi_\alpha(U_x)$
is open, has the weak path lifting property,
and its level sets are path connected.
\end{itemize}
Because $\Psi_\alpha(U_x)$ is not contained
in a two dimensional subspace of $\ft^*$
but is contained in $C_x$,
\begin{itemize}
\item[\dash]
the cone $C_x$ is not contained in a two dimensional subspace of $\ft^*$.
\end{itemize}

\item
Let $(x,\lambda)$ be a point of $M \times \Rplus$ with $\Psi_\alpha(x) \neq 0$.
By Lemma~\ref{Psi local}, every neighborhood of $(x, \lambda)$ in 
$M \times \Rplus$ contains a smaller neighborhood $U$ such that the map 
$\Psi|_U$ 
is convex and the map $\Psibar|_U$ is (defined and) open as a map to its image. 
\end{itemize}
Parts (1), (2), and (6) of the theorem then follow from 
Proposition~\ref{with zero'}.
\end{proof}

% =====================================================
\section{Examples}
% =====================================================
\labell{sec:examples}

In the examples further below, we will need to know
that a contact manifold is determined by its symplectization
together with the $\Rplus$ action on the symplectization.
We will also need to use the ``contact cutting" construction.
These are summarized in the following remark.

\begin{Remark} \labell{symplectic cone}
Let $M$ be a $2n+1$ dimensional manifold,
$\pi \co Q \to M$ a principal $\Rplus$ bundle,
and $\omega$ a symplectic form on the total space $Q$
that is homogeneous of degree one 
with respect to the principal $\Rplus$ action.
This structure is called a \emph{symplectic cone} \cite{GS:homogeneous}.

Let $\talpha = \iota_v \omega$, where $v$ is the vector field 
that generates the principal $\Rplus$ action. 
Then there exists a unique contact distribution $\xi$ on $M$
and a unique diffeomorphism from the symplectization $\xi^0_+$ onto $Q$
that respects the projection maps to $M$ and such that
the pullback of $\talpha$ is the tautological one--form
on the subset $\xi^0_+$ of $T^*M$.

A torus $T$ action on $Q$ that commutes with the $\Rplus$ action
and preserves $\omega$ descends to an action on the contact manifold
$(M,\xi)$.  The pullback to $Q$ of the contact momentum map is the map
$\Psi \co Q \to \ft^*$
given by $\Psi^X = \iota_{X_Q} \talpha$,
where $X_Q$, for $X \in \ft$,
are the vector fields on $Q$ that generate the action.

Let $i \co S^1 \hookrightarrow T$ be a subcircle 
and $i^* \co \ft^* \to \R$ the projection on the dual of the
Lie algebra.  Performing on $Q$ the symplectic cutting construction
with respect to this circle action
yields the symplectization  of the \emph{contact cut} 
of $M$; cf.\ \cite[Theorem~6]{geiges}
and \cite[Theorem~2.10]{lerman:contact-cuts}.
Its momentum map image is the intersection of $\image \Psi$
with the closed half--space $\{ i^* \geq 0 \} $ of~$\ft^*$.
\end{Remark}

\begin{Example} \labell{ex:sphere}
Consider $\R^{2n} \ssminus \{ 0 \}$ as a principal $\Rplus$ bundle
over $S^{2n-1}$, where $t \in \Rplus$ acts by $x \mapsto \sqrt{t} x$ 
and where the map to $S^{2n-1}$ is $x \mapsto x/\| x \|$,
and with the standard symplectic structure.
This is the symplectization of the standard contact structure on $S^{2n-1}$
(cf.\ Remark~\ref{symplectic cone}).
The standard (Hopf) circle action has momentum map
$x \mapsto \| x \|^2/2 $ with image $(0,\infty)$.
The opposite circle action has momentum map with image $(-\infty,0)$.
\end{Example}

\begin{Example} \labell{disconnected}
Consider the torus $T^k = (S^1)^k$; identify its cotangent bundle
with $T^k \times \R^k$.
Let $Q$ be the complement of the zero section:
$Q = T^k \times (\R^k \ssminus \{ 0 \})$.
The subtorus $T^{k-1} \times \{ 1 \}$ acts on $Q$ with momentum map
$\Psi \co (a,x) \mapsto \pi(x)$
where $\pi \co \R^k \to \R^{k-1}$ is the projection to the first
$(k-1)$ coordinates.
The level sets of this momentum map are
$$ \Psi\Inv(\beta) = \begin{cases}
 T^k \times \{ \beta \} \times \R &
                        \text{ if } \beta \neq 0 \text{ in } \R^{k-1} \\
 T^k \times \{ 0 \} \times (\R \ssminus \{ 0 \}) &
                        \text{ if } \beta = 0 \text{ in } \R^{k-1}.
\end{cases} $$
In particular, the zero level set is not connected.
This is an example of a contact momentum map: $Q$ is the symplectization
of the unit sphere bundle in the cotangent bundle;
cf.\ Remark~\ref{symplectic cone}.
\end{Example}

\begin{Example} \labell{disconnected2}
Begin with the symplectic manifold $Q$ of Example~\ref{disconnected}.
By performing the symplectic cutting construction
with respect to the circle $\{ 1 \}^{k-2} \times S^1 \times \{ 1 \}$,
we obtain a new symplectic manifold, $Q_\cut$,
still with a $(k-1)$ dimensional torus action and a
momentum map $\Psi_\cut \co Q_\cut \to \R^{k-1}$,
but its momentum map image is now the closed upper half--space in $\R^{k-1}$
and not all of $\R^{k-1}$.
This is the contact momentum map for the contact manifold obtained
from the unit sphere bundle in the cotangent bundle
by ``contact cutting"; cf.\ Remark~\ref{symplectic cone}.
The zero level set of the momentum map is still disconnected.
(Contrast with Remark~\ref{symplectic proper}.)
\end{Example}

\begin{Example}[circle actions]\labell{ex:dim one}
Suppose that $\dim T = 1$ and the contact manifold $M$ is connected.
If $T$ acts effectively,  
the momentum map image must be one of the following sets:
$(-\infty,0)$, $(-\infty,0]$, $(0,\infty)$, $[0,\infty)$,
or all of $\R$. 
As seen in Example~\ref{ex:sphere}, and in Examples~\ref{disconnected} 
and~\ref{disconnected2} with $k=2$, all these sets occur as images,
and the zero level set need not be connected.
\end{Example}

\begin{Example}[$\dim T=2$] \labell{ex:dim two}
For a two dimensional torus, the momentum cone need not be convex,
and the level sets of the momentum map need not be connected.
To see this, we
begin with the \emph{noncompact} manifold $\R \times (S^1)^2$,
with $(S^1)^2$ acting by rotations of the second component,
and with the contact one--form 
$\alpha = \cos t d\theta_1+\sin t d\theta_2$
and the $\alpha$--momentum map
$(t, e^{i\theta_1}, e^{i\theta_2}) \mapsto (\cos t, \sin t)$.
For every positive integer $n$, this descends to a contact one--form
and torus action
on the compact manifold $M_n = \R/(2\pi n \Z) \times (S^1)^2$.
The image of the contact momentum map $M_n \times \Rplus \to \R^2$
is $\R^2 \ssminus \{0\}$,  
and every nonempty level set has $n$ connected components.
Alternatively, for every interval $[a,b] \subset \R$
such that $\tan(a)$ and $\tan(b)$ are rational,
contact cutting in neighborhoods of $\{ a \} \times (S^1)^2$ 
and $\{ b \} \times (S^1)^2$ 
produces a contact one--form and torus action
on the lens space $M_{[a,b]}$,
obtained from the manifold with boundary $[a,b] \times (S^1)^2$
by collapsing circles in the two components of the boundary
by two circle subgroups. 
The image of the contact momentum map 
is $\{ (r\cos t,r\sin t) \ | \ a \leq t \leq b \text{ and } r > 0 \}$.
If $\pi < b-a < 2\pi$, the momentum cone is not convex. 
If $b-a \geq 2\pi$, 
the contact momentum map is not open as a map to its image. 
These examples are due to Eugene Lerman~\cite{lerman:contact-cuts}. 
Also see Example~\ref{dim2}.  
We summarize this in Table \ref{table 1}. 
\begin{table}[htb]
\begin{center}
\begin{tabular}{|l|c|c|c|}
\hline
 & convexity (C1) & connectedness (C2) & openness (C3) \\
\hline
$M_n$ \ , \ $n=1$ & $\checkmark$ & $\checkmark$ & $\checkmark$ \\ 
$M_n$ \ , \ $n\geq 2$ & $\checkmark$ & $\times$ & $\checkmark$ \\ 
\hline
$M_{[a,b]}$ \ , \ $0 < b-a \leq \pi$ & 
  $\checkmark$ & $\checkmark$ & $\checkmark$ \\
$M_{[a,b]}$ \ , \ $\pi < b-a < 2\pi$ & 
  $\times $ & $\checkmark$ & $\checkmark$ \\
$M_{[a,b]}$ \ , \ $ b-a \geq 2\pi$ & 
  $\checkmark$ & $\times$ & $\times$ \\
\hline
\end{tabular}
\end{center}
\caption{}\labell{table 1}
\end{table}
\end{Example}

We currently do not know whether the contact momentum map
for a \emph{nontransverse} $T$ action 
on a compact connected contact manifold 
can have disconnected nonzero level sets when $\dim T = 2$.  
But it \emph{can} have disconnected nonzero level sets when $\dim T = 1$:

\begin{Example}[nonzero level sets and openness for circle actions] 
\labell{level}
For the manifolds of Example \ref{ex:dim two},
restrict the torus action to an action of the circle 
$S^1 \times \{ 1 \}$; the momentum map gets composed
with the projection map $(x,y) \mapsto x$.
For the manifold $M_n$,
the contact momentum map is open and its image is $\R$;
the number of connected components of $\Psi\Inv(x)$
is $n$ if $x \neq 0$ and $2n$ if $x=0$. 
For the manifold $M_{[a,b]}$,
where $b = b' + 2\pi k$ with $k$ a nonnegative integer,
the numbers of connected components of $\Psi\Inv(x)$ 
and the images of $\Psi$ when $k=0$
are given in Table \ref{table 2}.
\begin{table}[htb]
\begin{center}
\begin{tabular}{|rcr|c|c|c|c|}
\hline
  &&& $x<0$ & $x=0$ & $x>0$ & $\image\Psi$\\
  &&& & & & when $k=0$ \\
\hline
$a=-\pi/2$ &,& $-\pi/2<b'<\pi/2$            & k & 1+2k & 1+k & $[0,\infty)$ \\
$a=-\pi/2$ &,& $\phantom{-\pi/2<}b'=\pi/2$  & k & 2+2k & 1+k & $[0,\infty)$ \\
$a=-\pi/2$ &,& $\phantom{-}\pi/2<b'<3\pi/2$ & 1+k & 2+2k & 1+k & $\R$ \\
$a=-\pi/2$ &,& $\phantom{-\pi/2<}b'=3\pi/2$ & 1+k & 3+2k & 1+k & $\R$ \\
$-\pi/2<a<\pi/2$ &,& $a<b'<\pi/2 $          & k & 2k & 1+k & $(0,\infty)$ \\
$-\pi/2<a<\pi/2$ &,& $\phantom{a<}b'=\pi/2$ & k & 1+2k & 1+k & $[0,\infty)$ \\
$-\pi/2<a<\pi/2$ &,& $\pi/2<b'<3\pi/2$      & 1+k & 1+2k & 1+k & $\R$ \\
$-\pi/2<a<\pi/2$ &,& $\phantom{\pi/2<}b'=3\pi/2$& 1+k & 2+2k & 1+k & $\R$ \\ 
$-\pi/2<a<\pi/2$ &,& $3\pi/2<b'\leq a+2\pi$     & 1+k & 2+2k & 2+k & $\R$ \\
\hline
\end{tabular}
\end{center}
	\caption{}\labell{table 2}
\end{table}
If $k \geq 1$, then $\image \Psi = \R$.
When we replace $[a,b]$ by $[a+\pi n,b+\pi n]$,
if $n$ is even, we get the same values, and if $n$ is odd,
the number of connected components for $x<0$ is switched
with the number for $x>0$ and $\image \Psi$ transforms
by $x \mapsto -x$.
When the image of the momentum map is $\R$
and $a$ or $b$ is equal to $\pi/2$ modulo $\pi \Z$, 
the contact momentum map $\Psi$ is not open as a map to its image;
in all other cases, the contact momentum map is open as a map 
to its image.
We summarize this in Tables \ref{table 3} and \ref{table 4}. 
(The convexity (C1) is automatic.) 
\begin{table}[htb]
\begin{center}
\begin{tabular}{|c|c|c|c|}
\hline
$M_n$ & convexity (C1) & connectedness of nonzero level sets (C2) & 
openness (C3) \\
\hline
 $n=1$     & \checkmark & \checkmark & \checkmark \\ 
 $n\geq 2$ & \checkmark & $\times$ & \checkmark \\ 
\hline
\end{tabular} 
\end{center}
	\caption{}\labell{table 3}
\end{table}

\begin{table}[htb]
\begin{center}
\begin{tabular}{|rrc|c|c|c|}
\hline
\multicolumn{3}{|c|}{$M_{[a,b]}$\ ,\ $b=b'+2\pi k$} & (C1) & (C2) & (C3) \\
\hline
$a=-\pi/2$ \ ,& $-\pi/2<b'\leq\pi/2$\ ,& \hfill $k=0$ & 
 \checkmark & \checkmark & \checkmark \\
  && $k \geq 1$ & \checkmark & $\times$ & $\times$ \\
\hline
$a=-\pi/2$\ ,& $\pi/2< b' \leq 3\pi/2$\ ,& $k = 0$ &
 \checkmark & \checkmark & $\times$ \\
 && $k \geq 1$ & \checkmark & $\times$ & $\times$ \\
\hline
$-\pi/2<a<\pi/2$\ ,& $a<b'<\pi/2 $\ ,&  $k=0$ &
 \checkmark & \checkmark & \checkmark \\
  && $k \geq 1$ & \checkmark & $\times$ & \checkmark \\
\hline
$-\pi/2<a<\pi/2$\ ,& $\phantom{a<}b'=\pi/2$\ ,& $k=0$ &
 \checkmark & \checkmark & \checkmark \\
  && $k \geq 1$ & \checkmark & $\times$ & $\times$ \\
\hline
$-\pi/2<a<\pi/2$\ ,& $\pi/2<b'<3\pi/2$\ ,& $k=0$ &
 \checkmark & \checkmark & \checkmark \\
  && $k \geq 1$ & \checkmark & $\times$ & \checkmark \\
\hline
$-\pi/2<a<\pi/2$\ ,& $\phantom{\pi/2<}b'=3\pi/2$\ ,& $k = 0$ &
 \checkmark & \checkmark & $\times$ \\
 && $k\geq 1$ & \checkmark & $\times$ & $\times$ \\
\hline
$-\pi/2<a<\pi/2$\ ,& $3\pi/2<b'\leq a+2\pi$\ ,& $k \geq 0$ &
 \checkmark & $\times$ & \checkmark  \\
\hline
\end{tabular}
\end{center}
	\caption{}\labell{table 4}
\end{table}
\end{Example}

\end{document}